\documentclass{article}
\usepackage[shortlabels]{enumitem}
\usepackage{amsmath}
\usepackage{amssymb}
\usepackage{amsthm}
\usepackage{abstract}
\usepackage{xcolor}
\usepackage{comment}
\usepackage{mathtools}
\usepackage{graphicx}
\usepackage{caption}
\usepackage{subcaption}
\usepackage{float}
\usepackage{hyperref}
\usepackage{cleveref}
\usepackage{tikz}
\usetikzlibrary{positioning, shapes.geometric, arrows.meta}
\hypersetup{
    colorlinks=true,
    linkcolor=blue,
    filecolor=magenta,      
    urlcolor=cyan
    }
\makeatletter
\newcommand*{\rom}[1]{\expandafter\@slowromancap\romannumeral #1@}
\newcommand*{\barfix}[2][.175ex]{%
  \mathpalette{\@barfix{#1}}{#2}%
}
\newcommand*{\@barfix}[3]{%
  \vbox{%
    \kern#1\relax
    \hbox{$#2#3\m@th$}%
  }%
}
\makeatother

\newtheorem{theorem}{Theorem}
\newtheorem{thm}{Theorem}[section]

\newtheorem{lemma}[thm]{Lemma}
\newtheorem{proposition}[thm]{Proposition}
\newtheorem{claim}[thm]{Claim}
\newtheorem{conjecture}[thm]{Conjecture}

\theoremstyle{definition}
\newtheorem{remark}[thm]{Remark}

\usepackage[top=21 mm, bottom=21 mm, left=20 mm, right= 20 mm]{geometry} 
\newcommand{\eps}{\varepsilon}

\newcommand{\e}{\mathrm{e}}

\newcommand{\cA}{\mathcal{A}}
\newcommand{\cB}{\mathcal{B}}
\newcommand{\cC}{\mathcal{C}}
\newcommand{\cD}{\mathcal{D}}

\newcommand{\cP}{\mathcal{P}}
\newcommand{\cQ}{\mathcal{Q}}

\title{The Mihail--Vazirani conjecture and strong edge-expansion in random $0/1$ polytopes} 

\author{%
Micha Christoph\thanks{D-MATH ETH Zurich, R\"amistrasse 101, 8092 Z\"urich, Switzerland}
\and Sahar Diskin\footnotemark[1]
\and Lyuben Lichev\thanks{Institute of Statistics and Mathematical Methods in Economics, TU Wien, A-1040 Vienna, Austria}
\and Benny Sudakov\footnotemark[1]
}

\date{}

\begin{document}
\maketitle

\begin{abstract}
We study the edge-expansion of the graph of a random $0/1$ polytope
$P^d_p$, defined as the convex hull of a random subset of the points in $\{0,1\}^d$ where every point is retained independently and with probability $p$.
This problem was introduced more than twenty years ago in a work of Gillmann and Kaibel, 
and has been extensively studied ever since.

We prove that, for every fixed $\eps>0$ and every $p\in(0,1-\eps]$, with high
probability the graph of $P^d_p$ has edge-expansion $\Theta(d)$. This improves the
previously best known bound due to Ferber, Krivelevich, Sales and Samotij, and
verifies, in a strong form, the celebrated Mihail--Vazirani conjecture for random
$0/1$ polytopes. Although the expansion factor $\Theta(d)$ is typically best
possible for $p\ge 1/2+\eps$, we also show that the behaviour changes drastically
at $p=1/2$. Namely, for every fixed $\eps>0$ and every integer $k\ge 2$, if
$p\le 1/2-\eps$, then with high probability the graph of $P^d_p$ has
edge-expansion $\Omega(d^k)$. Thus, random $0/1$ polytopes exhibit an interesting
phase transition at $p=1/2$.
\end{abstract}

\section{Introduction}
A \emph{$0/1$ polytope} in $\mathbb{R}^d$ is the convex hull of a subset of the
hypercube $\{0,1\}^d$.  Equivalently, every vertex of such a polytope has all
its coordinates equal to zero or one.  These polytopes are central objects in
polyhedral combinatorics: many combinatorial structures can be encoded as
characteristic $0/1$ vectors, and the corresponding polytope serves as a
geometric handle for the underlying combinatorial problem via linear
programming~\cite{Schrijver}.  Prominent examples include the perfect matching
polytope, the matroid base polytope, the cut polytope, the stable set polytope,
and the order-ideal polytope; we refer to~\cite{Schrijver} for an extensive
account of their role in combinatorial optimisation.
 
The \emph{graph} $G_P$ of a polytope $P$ has vertices (resp.\ edges) given by
the set of vertices (resp.\ edges) of $P$.
The graph of a polytope $P$ is also known as its \emph{$1$-skeleton}: it contains all faces of $P$ of dimension at most 1.
A basic measure of connectivity of $G_P$ is its
\emph{edge-expansion} (Cheeger constant) defined by 
\[
  h(G_P) \;=\; \min
  \bigg\{\frac{e(S,\, V(G_P)\setminus S)}{|S|}: S \subseteq V(G_P), 0 < |S| \le \frac{|V(G_P)|}{2}\bigg\},
\]
where $e(S, V(G_P)\setminus S)$ denotes the number of edges with one endpoint
in $S$ and the other outside $S$.
 
A primary motivation for providing lower bounds for $h(G_P)$ is its tight connection to the mixing time of the lazy random walk on $G_P$: 
indeed, such bounds readily result in upper bounds on the relaxation time (and hence, on the mixing time) via the discrete Cheeger inequality, see~\cite[Sections 12-13]{LP17}.
Random walks on $0/1$ polytopes are of significant algorithmic interest: they provide a standard method for
sampling an (approximately) uniformly random combinatorial object from the set encoded by the
vertices of that polytope. This approach was used, for example, by Jerrum, Sinclair, and Vigoda~\cite{JSV} to obtain a polynomial-time approximation
scheme for the permanent of a non-negative matrix. Beyond sampling, edge
expansion is intimately connected to several other fundamental properties of
graphs, including connectivity, diameter, and cycle lengths; we refer the reader
to the surveys~\cite{HLW, Krivelevich} for a broad perspective on expansion in
graphs.

While it has long been known that high-dimensional hypercubes have edge-expansion $1$ (following directly from Harper's inequality~\cite{Harper}), $0/1$ polytopes considered in the literature all seem to expand at least equally well.
This motivated Mihail and Vazirani~\cite{Mihail92} to
conjecture that this actually holds for all $0/1$ polytopes.
 
\begin{conjecture}[Mihail--Vazirani, \cite{Mihail92}]\label{conj:MV}
  Every $0/1$ polytope $P$ satisfies $h(G_P) \ge 1$.
\end{conjecture}
 
A proof of Conjecture~\ref{conj:MV} would have far-reaching algorithmic
consequences~\cite{MV92,Gillmann,Kaibel04}.
By now, it has been verified for several interesting families.
Kaibel~\cite{Kaibel04} proved it for all $0/1$ polytopes of dimension at most
five, all simple $0/1$ polytopes, all hypersimplices, all stable set polytopes,
and all perfect matching polytopes.  The conjecture was also established for
matching polytopes, order-ideal polytopes, and independent-set polytopes
in~\cite{Mihailthesis}, and for balanced matroid base polytopes
in~\cite{FM92}.  A recent major breakthrough is the result of Anari, Liu, Oveis
Gharan, and Vinzant~\cite{ALOV}, who proved the conjecture for the matroid base
polytope of \emph{any} matroid using the theory of log-concave polynomials and
high-dimensional expanders. As a consequence, they obtained a fully polynomial
randomised approximation scheme (FPRAS) for counting the number of bases of any
matroid, resolving a long-standing open problem in approximate counting. Despite this progress, the conjecture remains wide
open in general; in particular, it is unknown for the cut polytope, the
travelling-salesman polytope, and most other polytopes arising in combinatorial
optimisation.
 
A natural important setting for Conjecture~\ref{conj:MV}, and one that captures the
``typical'' behaviour of $0/1$ polytopes, is the \emph{random $0/1$ polytope}
$P^d_p = \operatorname{conv}(Q_p^d)$, where $Q_p^d \subseteq \{0,1\}^d$ is
obtained by retaining each vertex of $\{0,1\}^d$ independently with probability
$p \in (0,1)$.  Setting $p = 1/2$ here yields the uniform model.  The study of
random $0/1$ polytopes was initiated by Gillmann~\cite{Gillmann}, and the
question of whether the Mihail--Vazirani conjecture holds in this random setting
was asked explicitly more than twenty years ago by Kaibel and Remshagen~\cite{KR03}.
 
Progress on this question was first made by Leroux and
Rademacher~\cite{LR22}, who proved that, for every $p \in (0,1)$, the graph
of $P^d_p$ has edge expansion at least $1/(12d)$ with high probability (i.e.\
with probability tending to one as $n\to\infty$).  This result was substantially
improved by Ferber, Krivelevich, Sales, and Samotij~\cite{FKSS26}, who showed
that for $p \ge 2^{-0.99d}$ the edge expansion is bounded below by an
\emph{absolute positive constant} with high probability.  At the other extreme,
when $p$ is very small, the problem becomes more tractable: Bondarenko and
Brodskiy~\cite{BB} showed that for $p \le 2^{-5d/6}$ the polytope graph is a
complete graph with high probability (so its expansion is as large as possible),
and a short combinatorial proof of a related threshold phenomenon was given
in~\cite{BEF25}. Specifically, \cite{BEF25} shows that the graph of $P^d_p$ is typically sparse when 
$\log(1/p)\le (1/2-\eps)d$ and becomes dense when $\log(1/p)\ge (1/2+\eps)d$, 
and a similar ``logarithmic'' threshold for the graph being a clique is exhibited around $p = 2^{-\delta d}$ for $\delta \approx 0.8295$.
 
In this paper, we resolve the Mihail--Vazirani conjecture in the
random $0/1$ polytope model. 
Our first main result confirms the conjecture in a very strong form.
 
\begin{theorem}\label{thm:main1}
 For every $\varepsilon > 0$, there exists $c=c(\eps)>0$ such that, for every $p \in (0,1-\eps]$, whp\footnote{With high probability, that is, with probability tending to one as $d$ tends to infinity.}
  \[
    h(G_{P^d_p}) \;\ge\; cd.
  \]
\end{theorem}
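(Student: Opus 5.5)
The plan is to find, whp, a spanning subgraph $H\subseteq G_{P^d_p}$ that expands by a factor of order $d$. The subgraph will be built from edges that are certified combinatorially. The basic certificate is the following. Two points $x,y\in Q^d_p$ form an edge of $P^d_p$ whenever they are the only points of $Q^d_p$ in the smallest face of the cube containing both. That face is the subcube $C(x,y)$ in which the coordinates where $x$ and $y$ agree are fixed. Indeed, $C(x,y)$ is a face of the cube, so $\operatorname{conv}(Q^d_p\cap C(x,y))$ is a face of $P^d_p$. If this set equals $\{x,y\}$, the face is the segment $[x,y]$. Hence $xy$ is an edge whenever all $2^{|x\triangle y|}-2$ other points of $C(x,y)$ are absent.

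For $p\le 1-\eps$, a pair at Hamming distance $r$ is therefore certified with probability $(1-p)^{2^r-2}\ge \eps^{2^r-2}$, which is a constant for bounded $r$. So in the first step I fix a large constant $r_0=r_0(\eps)$ and let $H$ be the graph of pairs at distance at most $r_0$ certified in this way. This handles the range $p\ge 2^{-\delta d}$ with $\delta$ small. For $p$ smaller than that I would instead use the known results: the density and clique results \cite{BB,BEF25}, or a direct union bound showing that the graph is very dense. Together these should cover the whole range.

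The second step is to show that whp every $x\in Q^d_p$ has $\Omega(d)$ neighbours in $H$, and in fact that $H$ has good local structure. Two facts drive this.

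First, isolated points. The obstruction to a point of $Q^d_p$ having a neighbour at distance $1$ is that its neighbour is present. Points whose Hamming neighbours all lie in $Q^d_p$ have no certified distance-$1$ edges, but then the distance-$2$ subcubes can be used instead. More generally, a point of $Q^d_p$ whose $Q^d_p$-neighbourhood in the cube is sparse gains edges at distance $2$ or $3$. The relevant events for different coordinate directions are nearly independent, so Chernoff-type bounds combined with a union bound over the $2^d$ points should give minimum degree at least $cd$.

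Second, counting. For a fixed $x$, the number of certified neighbours at distance at most $r_0$ dominates a sum of $\binom{d}{r_0}$ weakly dependent indicators. A Janson or bounded-differences inequality then gives degree $\Theta(d^{r_0})$ with failure probability $e^{-\omega(d)}$, which is enough for the union bound.

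The third step, which I expect to be the main obstacle, is converting degree into edge expansion for every set $S$ with $|S|\le |Q^d_p|/2$. I would compare $H$ to the cube, using a canonical-paths or projection argument. Each $x\in Q^d_p$ is mapped to the cube, and each cube vertex not in $Q^d_p$ is assigned to a nearby point of $Q^d_p$ at distance $O(1)$; this is possible whp once $p$ is bounded below and $r_0$ is large. Harper's inequality then says that a set $T$ of cube vertices has at least $|T|\cdot\log_2(2^d/|T|)$ boundary edges. With the balance condition this gives at least $\Omega(|T|)$ boundary edges, and many more for small $T$. The boundary of the preimage of $S$ in the cube lifts to certified $H$-edges leaving $S$ with multiplicity $O(1)$. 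This already gives expansion $\Omega(1)$.

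To upgrade to $\Omega(d)$, I would use the extra factor coming from larger distances. For sets $S$ that are large, with $|S|\ge 2^{d-o(d)}$, I would apply the edge-isoperimetric inequality for the distance-$\le r_0$ graph. Every boundary edge of $S$ lies in $\Theta(d^{r_0-1})$ short paths that each spread to distinct certified long edges, and the expansion should be amplified by a factor of order $d$. For small sets, the minimum degree $\Theta(d)$ together with a sparsity bound on $e_H(S)$, obtained by a union bound over connected sets, handles expansion directly.

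Making the middle range rigorous is where I anticipate the real difficulty. This is the gap between the constant bound of \cite{FKSS26} and $\Theta(d)$, and it may need a fractional-flow argument on $H$ that loses only constant factors.
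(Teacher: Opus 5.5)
\textbf{There is a genuine gap at the heart of the theorem.} Your certificate (a pair is an edge of $P^d_p$ when it is alone in its spanning subcube) and the Harper comparison for sets with $|S|\le 2^{(1-c)d}$ are consistent with the paper. The hard case, however, is $2^{(1-c)d}\le |S|\le |V(P^d_p)|/2$, and your proposal gives no argument there.

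In that range Harper's bound degenerates to $\Theta(|S|)$, and a half-cube attains it. So the extra factor $d$ must come from long edges. You need this for all $S$ simultaneously, while the long edges of $P^d_p$ are globally dependent. The assertion that boundary edges ``spread to distinct certified long edges'' and that expansion ``should be amplified'' does not rule out the certified long edges at vertices of $S$ staying inside $S$.

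\textbf{The missing idea} in the paper is an exact independence structure.
\begin{enumerate}
\item Fix an odd $k$ and $m=d/(3k^2)$. Take the cubes $Q(D,v)\subseteq Q^d(k)$ whose directions are $m$ pairwise disjoint $k$-sets of coordinates.
\item For the edges $uv$ of such a cube, the sets $V(Q^d[u,v])\setminus\{u,v\}$ are pairwise disjoint and avoid the cube's vertices. Hence the distance-$k$ edges of $P^d_p$ restricted to the cube are distributed exactly as $Q^m_{p,q}$ with $q\ge(1-p)^{2^k-2}$ (Lemma~\ref{lem:reduction}).
\item A thinning-and-sprinkling argument (Proposition~\ref{prop: expansion of high degree}) shows that sets of high-degree vertices in $Q^m_{p,q}$ have vertex boundary $\Omega(|S|/\sqrt m)$. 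The failure probability is $2^{-\Omega(m^{3/2})}$, which survives a union bound over the $e^{2d\log d}$ cubes.
\item Averaging over the cubes then gives $e(S,S^c)\ge\binom{d}{k}|S|/d^3$ (Lemma~\ref{lem: bootstrapped expansion}). The averaging uses the bounded diameter of the cube-adjacency graph to force many cubes that meet $S$ moderately.
\end{enumerate}

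\textbf{Exceptional low-degree vertices must also be handled.} For $p>1/2$ the expected number of vertices retained together with all $d$ cube-neighbours is $2^dp^{d+1}\to\infty$. Such a vertex has polytope degree exactly $d$, so your claimed degree $\Theta(d^{r_0})$ at every vertex, via a union bound, is false. Your description of distance-$1$ edges is also reversed: a present Hamming neighbour always gives an edge, and it is absent neighbours that create long certified edges. The paper shows that at most $2^{(1-\alpha^3)d}$ vertices have small distance-$k$ degree, which is negligible for large $S$.

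\textbf{The second gap is small $p$.} Certified pairs at distance at most $r_0$ exist around a typical point only if $p\binom{d}{r_0}$ is not small. So your $H$ is essentially empty already for $p=o(d^{-r_0})$, not just below $2^{-\delta d}$. The results you invoke do not cover $[2^{-0.83d},d^{-r_0}]$: \cite{BEF25} gives a clique only for $p\le 2^{-0.83d}$, and density of the graph says nothing about edge-expansion.

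The paper's fix is renormalisation. It collapses $b$-dimensional fibres so that the occupation density $\rho=1-(1-p)^{2^b}$ is a constant. Lemma~\ref{lem:normalization} then lifts expansion back: every edge $uv$ of $P^{d-b}_\rho$ gives each $w\in\pi^{-1}(u)$ an edge of $P^d_p$ into $\pi^{-1}(v)$. The same device, with $\rho>1-e^{-4}$ and Lemma~\ref{lem: Wojtek}, is how the paper makes your small-set step rigorous. As you state it, the claim that cube-boundary edges lift with multiplicity $O(1)$ is unjustified. The points assigned to adjacent cube vertices need not form a certified pair, and a single certified edge can be reused by $d^{O(1)}$ cube edges.
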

 
For every $p$ bounded away from $1$, Theorem~\ref{thm:main1} shows that the edge-expansion grows linearly
in the dimension $d$: a factor of $d$ stronger than the conjecture itself
requires. Observe that this bound is tight up to the value of the constant $c$ whenever $p>1/2$: 
indeed, a simple second moment calculation shows that $Q^d_p$ retains some vertex $v$ together with all its $d$ neighbours in $Q^d$,
and thus the neighbourhoods of $v$ in $Q^d$ and in the graph of $P^d_p$ coincide.
This naturally raises the question whether the lower bound can be improved when $p<1/2$. 
Our second result answers this question in the positive by exhibiting a striking \emph{phase transition} at $p = 1/2$.
 
\begin{theorem}\label{thm:main2}
For every $\eps > 0$ and every integer $k \ge 2$, there exists $c_k=c_k(\eps)>0$ such that, for every $p = p(d)$ with $p\in \left[2^{-0.83d},1/2-\eps\right]$, whp
  \[
    h(G_{P^d_p}) \;\ge\; c_k d^k.
  \]
\end{theorem}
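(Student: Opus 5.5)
The plan is to show that for $p\le 1/2-\eps$, whp every pair of retained points $x,y\in Q^d_p$ at small Hamming distance $|x\oplus y|\le k$ (in fact, every pair at distance up to some slowly growing or linear threshold $\delta d$) forms an edge of $G_{P^d_p}$. Granting this, $G_{P^d_p}$ contains the graph $H$ on $Q^d_p$ in which $x\sim y$ whenever $1\le|x\oplus y|\le k$. Since $Q^d_p$ is a $p$-random subset of the hypercube, $H$ is a random induced subgraph of the $k$-th power of $Q^d$. Its edge-expansion can then be bounded below by $\Omega(d^k)$ by combining an isoperimetric inequality for Hamming balls with concentration for the random subset. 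A cleaner route is to transfer expansion from the hypercube: for $S\subseteq Q^d_p$, count pairs $(x,z)$ with $x\in S$ and $z\in Q^d_p\setminus S$ at distance exactly $k$.

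\textbf{Edge criterion.} A segment $[x,y]$ is an edge of $\operatorname{conv}(Q^d_p)$ iff it is not met in its relative interior by the convex hull of the other points in a way that destroys it. Equivalently, it suffices to find a linear functional $c$ with $\langle c,x\rangle=\langle c,y\rangle>\langle c,z\rangle$ for all $z\in Q^d_p\setminus\{x,y\}$. By symmetry, flip coordinates so that $x=0$ and $y=\mathbf 1_A$ with $|A|=j\le k$. Take $c$ with $c_i=-1$ for $i\notin A$, and on $A$ choose weights summing to $0$ that are positive on some coordinates and negative on others. Since $\mathbf 1_A$ must be tied with $0$, we need $\sum_{i\in A}c_i=0$. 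The only competing points are those $z$ with $\langle c,z\rangle\ge 0$, and these are essentially points supported within $A$ (plus perhaps a few outside coordinates if we use $-M$ off $A$). So $[x,y]$ is an edge unless a small number of specific "blocking" configurations occur inside the subcube spanned by $A$. The real criterion is that the segment $[0,\mathbf 1_A]$ is an edge of $\operatorname{conv}(Q^d_p\cap\{z:\operatorname{supp} z\subseteq A\})$, viewed as a $j$-dimensional $0/1$ polytope. Here $p<1/2$ matters: the segment fails to be an edge iff its midpoint lies in the convex hull of the other retained points of the $j$-cube. This requires a pair of antipodal-in-$A$ points $z,\mathbf 1_A-z$ both retained, or more generally a balanced combination. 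Thus, conditional on $x,y$ retained, the failure probability is bounded by some $q_j(p)<1$, independent of $d$.

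\textbf{Main obstacle.} A constant failure probability per pair is not negligible, so we cannot demand that all close pairs are edges. Instead we must use many "directions": for fixed $x$ and many disjoint sets $A$, the events are independent. I would therefore redo the expansion count robustly. For each $x\in S$, count the sets $A$ with $|A|=k$, $x\oplus\mathbf 1_A\in Q^d_p\setminus S$, and $[x,x\oplus\mathbf 1_A]$ an edge. I will show that whp, for every $x$, the $\Theta(d^k)$ pairs at distance $k$ that are edges form a pseudo-random subset. This should hold by Janson/Kim--Vu concentration, since the edge event depends only on the $2^k$ points of the subcube. The condition $p\le 1/2-\eps$ is where the $k$-subcube argument must give a positive edge probability uniformly. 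The main difficulty is to prove this, presumably by showing that the expected number of retained "balanced" blocking sets is controlled when $p<1/2$. The lower bound $p\ge 2^{-0.83d}$ keeps $|Q^d_p|$ large enough that concentration holds, and any density regime below it is covered by \cite{BB}-type cliqueness.

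\textbf{Expansion.} Finally, take $S\subseteq Q^d_p$ with $|S|\le|Q^d_p|/2$. Lift to the "distance-$k$ graph" $Q^{d,(k)}$ on all of $\{0,1\}^d$, whose spectral gap is $\Theta(d^k)$ by Krawtchouk eigenvalues. Apply the expander mixing lemma for the random subset, together with a union bound over small sets via a container-type or sparse-set argument. Large sets are handled spectrally, and small sets by a local count showing each $x$ has $\Omega(d^k)$ edge-neighbours, most of which lie outside $S$. I expect the small/medium set regime, $|S|$ around $2^{o(d)}$, to require the most care.
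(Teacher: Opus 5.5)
Your plan has two genuine gaps. The first is that it cannot reach the sparse part of the range. As soon as $p=d^{-\omega(1)}$ (the theorem allows $p$ down to $2^{-0.83d}$), a retained vertex has in expectation $p\sum_{j\le k}\binom{d}{j}=o(1)$ retained vertices within Hamming distance $k$. So the graph of polytope edges of length at most $k$, which is all you use, has isolated vertices and zero expansion, and the lower bound $p\ge 2^{-0.83d}$ does nothing to prevent this. The paper handles this by renormalisation. It groups $Q^d$ into $b$-dimensional fibres so that occupied fibres form a $\rho$-percolation of $Q^{d-b}$ with $\rho\in[1/100,1/50]$, and runs the distance-$k$ argument in $P^{d-b}_\rho$. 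It then lifts edges back with Lemma~\ref{lem:normalization}: every point of fibre $u$ gets an edge into fibre $v$, losing only a factor $d$ because whp no fibre contains more than $d$ retained points.

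Relatedly, you misplace the role of $p\le 1/2-\eps$. A positive edge probability for a single pair is automatic, being at least $(1-p)^{2^k-2}$ for every $p<1$. What is needed is that \emph{every} vertex has $\Omega(\binom{d}{k})$ long edges. A vertex whose $d$ cube neighbours are all retained has no long edges at all. Independence over disjoint directions only gives a per-vertex failure probability $e^{-\Theta(d/k)}$, far too large for a union bound over $2^d$ vertices. The paper uses $p<1/2$ exactly here: whp every vertex misses at least $\alpha d$ cube neighbours (Lemma~\ref{lem: degrees}). The $\binom{\alpha d}{k}$ targets lying behind missing neighbours then supply the long edges (Lemma~\ref{lem:min degree}). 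Your exact edge criterion is also false. With retained points $000,100,010,001,111$ in $Q^3$, the segment from $000$ to $111$ is not an edge, although $(\tfrac12,\tfrac12,\tfrac12)\notin\mathrm{conv}\{100,010,001\}$.

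The second and more fundamental gap is the passage from local information to expansion of every set $S$, for which you give no mechanism. Per-vertex pseudorandomness of neighbourhoods cannot suffice. Deleting all edges of $Q^d(k)$ that flip the first coordinate removes only a $k/d$ fraction of each neighbourhood, yet disconnects the graph. The expander mixing lemma for $Q^d(k)$ says nothing about $P_k$ without a spectral bound for $P_k$ itself. (Incidentally, the spectral gap of $Q^d(k)$ is $\binom{d}{k}\cdot 2k/d=\Theta(d^{k-1})$, not $\Theta(d^k)$.) But $P_k$ is a deterministic function of $Q^d_p$ in which one vertex state can affect $\Theta(\binom{d}{k})$ potential edges. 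For half-cube-like $S$, where only a $\Theta(k/d)$ fraction of distance-$k$ pairs cross, bounded-difference bounds for $e_{P_k}(S,S^c)$ give tails only of order $e^{-|S|/\mathrm{poly}(d)}$. Such tails cannot be union bounded over the $e^{\Omega(|S|)}$ candidate sets.

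This is precisely where the paper's new idea enters. Consider the family $\cQ_{k,m}$ of $m$-dimensional cubes in $Q^d(k)$ whose directions are $m$ disjoint $k$-sets. In each such cube, the interiors $V(Q^d[u,v])\setminus\{u,v\}$ of the subcubes spanned by the cube edges are pairwise disjoint and avoid the cube. Hence $H_p[P_k]$ is \emph{exactly} mixed percolation $Q^m_{p,q}$ (Lemma~\ref{lem:reduction}). The thinning-and-sprinkling Proposition~\ref{prop: expansion of high degree} gives vertex expansion $\gamma|S|/\sqrt{d}$ for high-degree sets inside each such cube, with failure probability $2^{-\Omega(m^{3/2})}$. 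This beats the union bound over the $e^{2d\log d}$ cubes. An averaging argument over $\cQ_{k,m}$ then yields $e(S,S^c)\ge\binom{d}{k}|S|/d^3$. It uses the diameter $O(kd)$ of $G_\square$ to force many $S$-moderate cubes, together with a count of edge multiplicities.
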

Several comments are in place. As mentioned before, by \cite{BEF25}, when $p\le 2^{-0.83d}$, we have that $P^d_p$ is a clique with high probability. We note here that the key new ingredient is a coupling between
the long edges of the random polytope and mixed percolation on
hypercubes. Here, mixed percolation means that vertices and edges are retained
independently, with probabilities $p$ and $q$,
respectively; we denote the resulting random subgraph of the
hypercube by $Q^d_{p,q}$. This defines a natural random graph
model on the hypercube which may be of independent interest. For a fixed odd distance $k$, let
$Q^d(k)$ denote the graph with vertex set $V(Q^d)$ in which two
vertices $u,v$ are adjacent if they differ in exactly $k$
coordinates. We exhibit many $m$-dimensional cubes inside
$Q^d(k)$ such that the restriction of the random graph to each
such cube has the law of $Q^m_{p,q}$; see
Lemma~\ref{lem:reduction}. This reduces the geometric problem of
controlling long edges in the random polytope to a local
expansion statement for a locally i.i.d.\ model, proved in
Proposition~\ref{prop: expansion of high degree}. Averaging these
local expansion estimates over a large overlapping family of
cubes, and then applying a renormalisation step together with
Lemma~\ref{lem:normalization}, allows us to transfer the resulting
boundary edges back to $P_p^d$.

Let us further remark that, for Theorem~\ref{thm:main2}, essentially the same argument applies when $k=\Omega(\log \log d)$, which is optimal up to the value of the implicit constant for all fixed $p<1/2$. We nevertheless present the proof for arbitrary $k$ for the sake of transparency and readability. The only required modification is to allow the parameters $q,\alpha$, and $\gamma$ in Proposition~\ref{prop: expansion of high degree} to tend to zero (specifically, with $q,\alpha,\gamma=d^{-o(1)}$). With this adjustment, the proof of the proposition --- and consequently of the theorem --— goes through almost verbatim, with only minor changes in the application of Lemma~\ref{lem: decent vertices}. We refer the reader to the statement of Lemma~\ref{lem:min degree} to see explicitly where and why the condition $k=\Omega(\log \log d)$ arises as a bottleneck in the argument.


The paper is structured as follows. In Section \ref{sec: outline} we present an outline of the proof of the main theorems, and in Section \ref{sec: prelim} we collect relevant notation and several useful lemmas. The proof itself has three layers. First, in Section~\ref{subsec:renorm} we renormalise the sparse regime to a density bounded away from zero and show, via Lemma~\ref{lem:normalization}, that expansion in the projected model lifts back to the original polytope. Second, in Section~\ref{sec: mixed perco} we prove Proposition~\ref{prop: expansion of high degree}, an expansion theorem for mixed vertex-edge percolation on the hypercube, which could be of independent interest. Third, and this is the key new structural input, in Section~\ref{section: family of cubes} we show that inside a carefully chosen family of subcubes of $Q^d(k)$ (see Section \ref{sec: prelim} for notation), the long-edge graph of the random polytope is distributed as such a mixed-percolated hypercube. Section~\ref{sec: long distances} then bootstraps these local expansion estimates into a global lower bound on the polytope boundary, from which Theorems~\ref{thm:main1} and~\ref{thm:main2} follow in Section \ref{sec: proof of Theorems}.

\bigskip
\noindent
{\bf Note added in proof.} While this paper was being prepared, we
learned that essentially the same results were obtained independently and
simultaneously by Guo and Tomon~\cite{GT26}, using very different
methods.

\section{Proof outline}\label{sec: outline}
In this section, we present an outline of the proof of Theorems~\ref{thm:main1} and~\ref{thm:main2}. 
We focus on Theorem~\ref{thm:main2}; at the end of the section, we explain how to derive Theorem~\ref{thm:main1} with minor modifications of the same ideas.

The first step is a renormalisation argument (used in~\cite{FKSS26}) whose purpose is to replace the original model by a coarser one where the relevant density parameter is of constant order.
Concretely, we partition the hypercube into $b$-dimensional fibres and collapse each fibre to a single vertex. A fibre is occupied if it contains at least one surviving vertex, so the projected model has effective density $\rho = 1-(1-p)^{2^b}$. 
The point of this reduction is that, after renormalisation, the density of surviving vertices belongs to a range where certain local expansion estimates, which we develop, can be applied; 
this local expansion is lifted from the projected polytope to the original one via Lemma~\ref{lem:normalization}. 

After this reduction, the proof of Theorem~\ref{thm:main2} has three conceptual steps. The first is a local expansion result for mixed percolation on the hypercube $Q^d_{p,q}$ (Proposition~\ref{prop: expansion of high degree}, which may be of independent interest).
We show that whp any set whose vertices all have linear degree in $Q^d_{p,q}$ must have neighbourhood of order $\Omega(|S|/\sqrt d)$. 
The proof is a thinning-and-sprinkling argument. If such a set $S$ existed in $Q^d_{p,q}$, by moving to $Q^d_{p/2,q}$ via vertex-thinning, it is not too unlikely that this set $S$ has an empty neighbourhood in $Q^d_{p/2,q}$, that is, it becomes disconnected from the rest of the graph.
Moreover, one can show that typically it still retains many vertices of high degree (that is, proportional to $d$). 
We show that having a set with many vertices of high degree and empty neighbourhood is very unlikely via a partitioning argument (in Lemma~\ref{lem: decent vertices}) and a delicate structural analysis of $Q^d_{p,q}$ (in Lemmas~\ref{lem: expanding matching}--\ref{lem: sprinkling event}).

The second step is to connect this local hypercube statement to the geometry of the random polytope. 
The expansion we seek in $P^d_p$ is not coming from the ordinary hypercube edges but from longer ones, created by the convex hull structure. 
These long edges are difficult to control directly as they are globally dependent. 
The key idea is to consider them inside carefully chosen subgraphs. 
For a fixed odd distance $k$, Section~\ref{section: family of cubes} constructs a family $\cQ_{k,m}$ of $m$-dimensional cubes inside the graph $Q^d(k)$ with vertices $V(Q^d)$ and an edge between any two vertices at Hamming distance exactly $k$.
Each cube $H\in \cQ_{k,m}$ is a small subgraph of the long-edge graph with the crucial property that
$H\cap P^d_p$ has distribution generated from mixed percolation on $Q^m$
(which is the content of Lemma~\ref{lem:reduction}). 
This explains the relevance of our mixed-percolation estimates: they give a lower bound on the edge-expansion on each subcube.

The third step is the global amplification, carried out in Section~\ref{sec: long distances}. The cubes in $\cQ_{k,m}$ are highly overlapping, so a single set $S$ is seen many times from many different local viewpoints. 
Our purpose is to average the local expansion contributions over the whole family of cubes (this is done in Lemma~\ref{lem: bootstrapped expansion}).
For this, we need to ensure that many cubes have a moderate proportion of their vertices occupied by $S$, as this is the regime where our expansion estimates are applicable and efficient.
The remaining lemmas in Section~\ref{sec: long distances} ensure this fact and show that no edge from $S$ to its complement is counted too many times in the averaging procedure.
Together these conclusions ensure that the local $\Omega(|S|/\sqrt d)$ expansion inside cubes is amplified to the desired global expansion bound for $P_p^d$.

The proof of Theorem~\ref{thm:main1} follows the same general strategy with two changes. 
First, thanks to Theorem~\ref{thm:main2}, we can assume that $p\geq 1/3$, so the renormalisation step in the proof of Theorem~\ref{thm:main2} is not needed.
However, $P^d_p$ may now contain a few vertices which preserve their neighbourhood in $Q^d$. 
To take this phenomenon into account, we use an expansion estimate for dense cubes from~\cite{FKSS26} (Lemma~\ref{lem: Wojtek}) together with the same normalisation as in Theorem~\ref{thm:main2}, albeit with different parameters. 
Combined with the prior ``long edges'' argument, this gives the lower bound $h(G_{P_p^d})=\Omega(d)$.

\section{Preliminaries}\label{sec: prelim}
\subsection{Notation}
For real numbers $a,x$ and $b>0$, we write $x=a\pm b$ to say that $x\in [a-b,a+b]$. We denote by $\mathrm{Bin}(n,p)$ the binomial distribution with parameters $n$ and $p$.
In the $d$-dimensional binary hypercube $Q^d$, the Hamming distance between two vertices corresponds to the number of bits where they differ.
Rounding is ignored when insignificant for our arguments.

Given a graph $G=(V,E)$ and disjoint sets of vertices $A,B\subseteq V$, we write $e_G(A,B)$ for the number of edges with one endpoint in $A$ and another endpoint in $B$.
We sometimes write $e_P(A,B)$ for a polytope $P$: in this case, the underlying graph is the graph of the polytope $P$.
Given graphs $G_1\subseteq G_2$ and sets $U\subseteq V(G_2)$ and $F\subseteq E(G_2)$, we write $G_1[U]$ for the subgraph of $G_1$ induced by $U\cap V(G_1)$, and $G_1[F]$ for the spanning subgraph of $G_1$ with edge set $F\cap E(G_1)$.
Further, for $G=(V,E)$ and a parameter $p$, we write $E_p\subseteq E$ for a random subset containing every $e\in E$ independently and with probability $p$. 
Similarly, we write $V_p\subseteq V$ for a random subset containing every $v\in V$ independently and with probability $p$. 
Given parameters $p,q\in[0,1]$, we write $G_{p,q}\coloneqq G(V_p,E_q)$, that is, the induced subgraph obtained by retaining every vertex probability $p$ and every edge with probability $q$. 
For $F\subseteq E$ and $u\in V$, we write $N_{F}(v)$ to denote the neighbours of $v$ in $(V,F)$, and $N_F[v] = N_F(v)\cup \{v\}$. This notation readily generalises for larger sets $S\subseteq V(G)$ by setting $N_F(S) = (\bigcup_{v\in S} N_F(v))\setminus S$ and $N_F[S] = N_F(S)\cup S$.
Throughout the text, unless explicitly stated otherwise, we assume that $V\coloneqq V(Q^d)$ and $E\coloneqq E(Q^d)$; the subscript in $e_G(\cdot,\cdot)$ and $N_E(\cdot)$ are omitted in this case.
For vertices $x,y\in V(Q^d)$, we denote by $Q^d[x,y]$ the smallest subcube of $Q^d$ containing each of $x$ and $y$. 
Given a random $0/1$ polytope $P^d_p$, for every $j\in \{1,\ldots,d\}$, we write $P_j\coloneqq P^d_p(j)$ for the subset of edges $uv$ of $P^d_p$ whose endpoints differ in $j$ coordinates, i.e., have Hamming distance $j$. 
For $k\ge 1$, we denote by $Q^d(k)$ the graph whose vertex set is $V(Q^d)$ and edges go between pairs of vertices $u,v$ differing in exactly $k$ coordinates.

\subsection{Auxiliary results}
We begin with Harper's edge-isoperimetric inequality for hypercubes, see~\cite{H64} and also \cite{B67,H76,L64}.
\begin{lemma}\label{l: Harper edge}
For every integer $d \ge 1$ and for every set $S\subseteq V(Q^d)$,
\begin{align*}
e(S,V(Q^d)\setminus S)\ge \left(d-\log_2|S|\right) |S|.
\end{align*}
\end{lemma}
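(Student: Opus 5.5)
We prove Harper's edge-isoperimetric inequality by induction on $d$, in the equivalent form that the number of edges of $Q^d$ spanned by $S$ satisfies
\[
e(S)\le \tfrac12|S|\log_2|S| .
\]
This suffices because every vertex of $S$ has degree $d$, so
\[
d|S| = 2e(S) + e(S,V(Q^d)\setminus S),
\]
and hence $e(S,V(Q^d)\setminus S)\ge d|S|-|S|\log_2|S|$, which is exactly the claim. The empty set is trivial. The base case $d=0$ holds since then $|S|\le 1$ and $e(S)=0$.

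For the induction step, split $Q^d$ along the last coordinate into two copies $Q_0,Q_1$ of $Q^{d-1}$. Write $S_i=S\cap Q_i$, with $a=|S_0|\ge b=|S_1|$. The edges of $Q^d$ inside $S$ are of three kinds:
\begin{itemize}[leftmargin=*]
\item edges inside $S_0$, at most $\tfrac12 a\log_2 a$ of them by induction;
\item edges inside $S_1$, at most $\tfrac12 b\log_2 b$ of them by induction;
\item edges in the perfect matching between $Q_0$ and $Q_1$, at most $\min(a,b)=b$ of them.
\end{itemize}
(For $b=0$, interpret $b\log_2 b=0$.) It therefore remains to prove the purely analytic inequality
\[
a\log_2 a+b\log_2 b+2b\le (a+b)\log_2(a+b)\qquad\text{for } a\ge b\ge 0 .
\]

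This inequality is the only real obstacle. To prove it, set $t=b/a\in[0,1]$ and divide by $a$. It becomes
\[
f(t):=(1+t)\log_2(1+t)-t\log_2 t-2t\ge 0 .
\]
We check:
\begin{itemize}[leftmargin=*]
\item the endpoints: $f(0)=0$ and $f(1)=2-2=0$;
\item the shape: $f''(t)=\frac{1}{\ln 2}\bigl(\frac1{1+t}-\frac1t\bigr)<0$ on $(0,1)$, so $f$ is concave there.
\end{itemize}
A concave function on $[0,1]$ that vanishes at both endpoints is nonnegative throughout. Hence $f\ge 0$ on $[0,1]$, which closes the induction and proves the lemma.
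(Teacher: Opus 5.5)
Your proof is correct. The reduction to $e(S)\le \frac12|S|\log_2|S|$ via $d|S|=2e(S)+e(S,V(Q^d)\setminus S)$ is valid, and so is the split of the edges spanned by $S$ across the last coordinate, where the cross edges form a matching and hence number at most $\min(a,b)$. The concavity argument for $f(t)=(1+t)\log_2(1+t)-t\log_2 t-2t$ also works, since $f$ is continuous on $[0,1]$, concave on $(0,1)$, and $f(0)=f(1)=0$.

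The paper does not prove this lemma at all. It simply quotes the classical edge-isoperimetric inequality from Harper, Lindsey, Bernstein and Hart, so your induction on the dimension is a self-contained replacement for that citation.

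The only difference is in strength. The cited results determine the exact maximum of $e(S)$ for every value of $|S|$, attained by initial segments of the binary order. Your argument proves directly the cleaner bound $\frac12|S|\log_2|S|$, which is tight only when $|S|$ is a power of two. That weaker bound is exactly what the paper uses, for instance in the case $|S|\le d^2$ of Proposition~\ref{prop: expansion of high degree} and for the small components in Lemma~\ref{lem: sprinkling event}. Your route buys self-containment at the cost of a few lines; the citation buys brevity and the sharp extremal statement, which the paper never needs.
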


We will also use the following (crude) immediate corollary of Harper's vertex-isoperimetric inequality~\cite{H66}.
\begin{lemma}\label{l: harper vertex}
Fix $\eps\in (0,1)$. For every $|S|\le (1-\eps)\cdot 2^{d}$, we have $|N(S)|\ge \eps|S|/(10\sqrt{d})$. 
\end{lemma}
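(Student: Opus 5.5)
We derive the lemma from Harper's vertex-isoperimetric inequality. That inequality says that, among all sets of a given size, the vertex boundary is minimised by an initial segment of the simplicial order, i.e.\ a Hamming ball $B(r)=\{x:|x|\le r\}$ together with part of the next layer. Fix $S$ with $|S|\le(1-\eps)2^d$. By Harper, $|N(S)|\ge |N(I)|$, where $I$ is the initial segment with $|I|=|S|$. So it suffices to prove the bound for initial segments. Choose $r$ with $|B(r-1)|<|S|\le|B(r)|$.

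\textbf{Layer estimate.} Write $L_j$ for the $j$-th layer, so $|L_j|=\binom dj$. The boundary of $I$ contains the part of $L_r$ not in $I$ together with the shadow of $I\cap L_r$ in $L_{r+1}$. A cleaner route is the standard consequence of Harper that $|N[S]|\ge |B(r+1)|$ whenever $|S|\ge |B(r)|$. Using this, it suffices to show that for every $r$ with $|B(r)|\le (1-\eps)2^d$ we have
\[
\binom{d}{r+1}\ge \frac{\eps}{10\sqrt d}\,|B(r)|.
\]
Sizes strictly between consecutive balls are handled by interpolation: the partial layer behaves linearly, and the Kruskal--Katona/Lov\'asz shadow bound gives a boundary of at least the right proportion. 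Being crude here only costs constants, and the constant $10$ leaves room for that.

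\textbf{Two regimes.} We verify the displayed inequality as follows.
\begin{itemize}
\item[(i)] If $r\le d/2-\sqrt d$, the ratios $\binom{d}{j-1}/\binom dj=j/(d-j+1)\le 1-c/\sqrt d$ decay geometrically. Hence $|B(r)|\le O(\sqrt d)\binom dr$, and $\binom d{r+1}\ge \binom dr\cdot(d-r)/(r+1)\ge\binom dr$. Together these give $\binom d{r+1}\ge |B(r)|/O(\sqrt d)$ with no dependence on $\eps$.
\item[(ii)] If $d/2-\sqrt d<r$, the condition $|B(r)|\le(1-\eps)2^d$ forces the complement $2^d-|B(r)|$ to be at least $\eps 2^d$. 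By the central limit theorem, or directly from Chernoff bounds, this forces $r\le d/2+C_\eps\sqrt d$. In the window $|r+1-d/2|\le C_\eps\sqrt d+1$, a Stirling estimate would give $\binom d{r+1}\ge c\,e^{-2C_\eps^2}2^d/\sqrt d$. This carries the wrong dependence on $\eps$, since $C_\eps\approx\sqrt{\log(1/\eps)}$.
\end{itemize}
We therefore refine (ii) by a tail argument. The Gaussian tail gives $\Pr(\mathrm{Bin}(d,1/2)>r)\le$ (density at $r+1$) $\cdot O(\sqrt d/(r+1-d/2))$ for $r$ above $d/2$, via Mills' ratio. Hence $\binom d{r+1}\gtrsim \eps 2^d\cdot (r-d/2)/d$. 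When $r-d/2\ge1$ this is at least $\eps 2^d/(10\sqrt d)$ once we also use that the density term is at least $\eps$ times $\sqrt d$ over $d$. Alternatively, we argue directly: $\binom{d}{r+1}/2^d\ge \frac{1}{\sqrt d}\cdot\Pr(\mathrm{Bin}>r)\cdot c$ for $r\ge d/2$. This follows because the upper tail beyond $r$ is dominated by a geometric series whose ratio is at most $1-(r-d/2)/d$, with the first term $\binom d{r+1}$; the small-deviation case $r-d/2\le\sqrt d$ is handled by (i)-style bounds. Since $\Pr(\mathrm{Bin}>r)\ge\eps$ and $|B(r)|\le 2^d$, the required bound follows.

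\textbf{Main obstacle.} The delicate point is exactly this upper-middle range, where we need the boundary to be linear in $\eps$ rather than exponentially small in $C_\eps^2$. The log-concavity tail comparison handles it: the ratio $\binom d{j+1}/\binom dj$ is decreasing in $j$, so the tail beyond $r$ is at most $\binom d{r+1}$ times the number of terms before the ratio drops below $1/2$, which is $O(\sqrt d)$ in the relevant range and $O(d/(r-d/2))\le O(\sqrt d)$ beyond it. With these estimates the stated constant $1/10$ is comfortably achieved for large $d$, and small $d$ is trivial since $|N(S)|\ge1$ for any nonempty $S$ with nonempty complement.
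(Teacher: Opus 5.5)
Your proposal is correct and follows the route the paper has in mind: the paper gives no proof beyond calling the lemma an immediate corollary of Harper's vertex-isoperimetric inequality, and your steps (reduce to initial segments, interpolate across the partial layer with the Kruskal--Katona/Lov\'asz (local LYM) shadow bound, then compare $\binom{d}{r+1}$ with the binomial tail) are exactly how that corollary is derived. Discard the claim that $\eps 2^d(r-d/2)/d\ge \eps 2^d/(10\sqrt d)$ once $r-d/2\ge 1$, which in fact needs $r-d/2\gtrsim\sqrt d$. What carries the upper range is your geometric-tail bound $\sum_{j>r}\binom{d}{j}\le \frac{d}{r-d/2}\binom{d}{r+1}$ for $r\ge d/2+\sqrt d$, together with $\binom{d}{r+1}=\Omega(2^d/\sqrt d)$ for $|r-d/2|\le\sqrt d$. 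Also, your reduction to balls with $|B(r)|\le(1-\eps)2^d$ does not cover initial segments with $|B(r-1)|<|I|\le(1-\eps)2^d<|B(r)|$. Stating the outcome as $|N(I)|\ge \min\{|I|,2^d-|I|\}/O(\sqrt d)$ for every initial segment $I$ closes this case, and it follows from the same estimates.
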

\begin{remark}
    We note that, for small $\eps>0$, the correct dependency of the lower bound on $\eps$ is of the form $\eps\sqrt{\log(1/\eps)}$, but the above (simpler and cruder) bound will suffice for our needs.
\end{remark}

We also use the classic Chernoff-type tail bound on the binomial distribution (see e.g.~Appendix~A in \cite{AS16}).
\begin{lemma}\label{l: chernoff}
Consider $d\in \mathbb{N}$, $p\in [0,1]$ and $X\sim \mathrm{Bin}(d,p)$. Then, for every $t\in (0,dp/2]$, 
\begin{align*}
    &\mathbb{P}\left[X\neq dp\pm t\right]\le 2\exp\left(-\frac{t^2}{3dp}\right).
\end{align*}
\end{lemma}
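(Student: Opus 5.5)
This is the standard Chernoff bound for a binomial variable, so the plan is the exponential-moment (Cramér--Chernoff) method: bound the upper and lower tails separately and add them. Write $\mu = dp$ and $X=\sum_{i=1}^d X_i$, where the $X_i$ are independent Bernoulli$(p)$ variables. Throughout, $0<t\le \mu/2$, so $\delta := t/\mu \in (0,1/2]$.

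\emph{Upper tail.} For $\lambda>0$, Markov's inequality applied to $\e^{\lambda X}$, together with independence and $1+x\le \e^{x}$, gives
\[
\mathbb{P}[X\ge \mu+t]\le \e^{-\lambda(\mu+t)}\prod_{i=1}^d\bigl(1+p(\e^\lambda-1)\bigr)\le \exp\bigl(\mu(\e^\lambda-1)-\lambda(\mu+t)\bigr).
\]
Choosing $\lambda=\log(1+\delta)$ yields the bound $\exp\bigl(-\mu\,[(1+\delta)\log(1+\delta)-\delta]\bigr)$. It then remains to check the elementary inequality $(1+\delta)\log(1+\delta)-\delta\ge \delta^2/3$ for $\delta\in[0,1]$. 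Set $f(\delta)=(1+\delta)\log(1+\delta)-\delta-\delta^2/3$. Then $f(0)=f'(0)=0$ and $f''(\delta)=1/(1+\delta)-2/3\ge 0$ for $\delta\le 1/2$. This is enough, because we only need $\delta\le 1/2$. Hence $\mathbb{P}[X\ge \mu+t]\le \exp(-t^2/(3\mu))$.

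\emph{Lower tail.} Apply the same argument to $\e^{-\lambda X}$ with $\lambda=-\log(1-\delta)$. This gives the bound $\exp\bigl(-\mu\,[(1-\delta)\log(1-\delta)+\delta]\bigr)$. Expanding in powers of $\delta$,
\[
(1-\delta)\log(1-\delta)+\delta=\sum_{j\ge 2}\frac{\delta^j}{j(j-1)}\ge \delta^2/2\ge \delta^2/3 .
\]
Hence $\mathbb{P}[X\le \mu-t]\le \exp(-t^2/(2\mu))\le \exp(-t^2/(3\mu))$.

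\emph{Conclusion.} The event $X\neq dp\pm t$ is contained in the union of the two tail events, so a union bound gives the factor $2$. The only step that needs any care is the calculus inequality for the upper tail. It is exactly where the restriction $t\le dp/2$ is used, since it keeps $\delta$ inside the range where the constant $3$ works.
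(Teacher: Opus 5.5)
Your proof is correct: it is the standard Cram\'er--Chernoff exponential-moment argument, and the paper gives no proof of its own but simply cites the Chernoff bound from Appendix~A of Alon--Spencer, which rests on exactly this method. Both the convexity check $f''(\delta)=1/(1+\delta)-2/3\ge 0$ on $[0,1/2]$ for the upper tail and the series identity for the lower tail are right, and the case $p=0$ is vacuous since then no admissible $t$ exists.
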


We also utilise a variant of the bounded difference inequality (see \cite{War16} and also~Chapter 7 in \cite{AS16}).
\begin{lemma}\label{l: azuma}
Fix $m\in \mathbb{N}$ and let $p\in [0,1]$. Consider a random vector $X = (X_1,X_2,\ldots, X_m)$ with independent entries distributed according to $\mathrm{Bernoulli}(p)$. 
Fix $\Lambda = \{0,1\}^m$ and a function $f:\Lambda\to\mathbb{R}$ such that there is $k_1,\ldots, k_m \in \mathbb{R}$ with the property that, for every $x,x' \in \Lambda$ which differ only in the $i$-th coordinate, $|f(x)-f(x')|\le k_i$ for every $i\in \{1,\ldots, m\}$. 
Then, for every $t\ge 0$,
\begin{align*}
    \mathbb{P}\left[\big|f(X)-\mathbb{E}\left[f(X)\right]\big|\ge t\right]\le 2\exp\left(-\frac{t^2}{p\cdot 2\sum_{i=1}^mk_i^2+2t/3\cdot \max_ik_i}\right).
\end{align*}
\end{lemma}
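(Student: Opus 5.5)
The plan is to prove Lemma~\ref{l: azuma} via the Doob martingale and a Bernstein--Freedman-type exponential bound. The key point is that each martingale increment is a \emph{centred Bernoulli variable times a bounded factor}, so its conditional variance is at most $p\,k_i^2$, not merely $k_i^2$.

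\emph{Step 1: the increments.} Let $\mathcal F_i=\sigma(X_1,\dots,X_i)$ and $M_i=\mathbb E[f(X)\mid \mathcal F_i]$, so that $M_0=\mathbb E f(X)$ and $M_m=f(X)$. Set $D_i=M_i-M_{i-1}$. Given $X_1,\dots,X_{i-1}$, define
\[
g_i(x)=\mathbb E\bigl[f(X)\mid \mathcal F_{i-1},\,X_i=x\bigr],\qquad x\in\{0,1\}.
\]
By independence, we can couple the remaining coordinates $X_{i+1},\dots,X_m$ identically in the two cases $x=0$ and $x=1$. The hypothesis on $f$ then gives $|g_i(1)-g_i(0)|\le k_i$. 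Since $M_{i-1}=p\,g_i(1)+(1-p)\,g_i(0)$, we get the exact identity
\[
D_i=(X_i-p)\bigl(g_i(1)-g_i(0)\bigr).
\]
Here $g_i(1)-g_i(0)$ is $\mathcal F_{i-1}$-measurable. Consequently $|D_i|\le \max(p,1-p)\,k_i\le b:=\max_j k_j$, and
\[
\mathbb E[D_i^2\mid\mathcal F_{i-1}]=p(1-p)\bigl(g_i(1)-g_i(0)\bigr)^2\le p\,k_i^2 .
\]
Summing, the predictable quadratic variation is at most $V:=p\sum_i k_i^2$, and this bound is deterministic.

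\emph{Step 2: exponential moment.} For a mean-zero variable $D$ with $D\le b$ and conditional variance $\sigma^2$, and for $0<\lambda<3/b$, the standard Bernstein estimate gives
\[
\mathbb E[e^{\lambda D}\mid\mathcal F]\le \exp\Bigl(\sigma^2\,\tfrac{e^{\lambda b}-1-\lambda b}{b^2}\Bigr)\le \exp\Bigl(\tfrac{\lambda^2\sigma^2}{2(1-\lambda b/3)}\Bigr).
\]
This uses that $(e^{u}-1-u)/u^2$ is increasing, together with the series bound $e^{u}-1-u\le \tfrac{u^2}{2}\sum_{j\ge0}(u/3)^j$. Iterating conditional expectations from $i=m$ down to $i=1$, and using the deterministic bound $V$ on the summed variances, yields
\[
\mathbb E\bigl[e^{\lambda(M_m-M_0)}\bigr]\le \exp\Bigl(\tfrac{\lambda^2V}{2(1-\lambda b/3)}\Bigr).
\]

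\emph{Step 3: conclusion.} By Markov's inequality, $\mathbb P[f(X)-\mathbb E f(X)\ge t]\le \exp\bigl(-\lambda t+\tfrac{\lambda^2V}{2(1-\lambda b/3)}\bigr)$. Choose $\lambda=t/(V+bt/3)$, which satisfies $\lambda<3/b$. Then
\[
1-\tfrac{\lambda b}{3}=\tfrac{V}{V+bt/3},
\]
so the exponent becomes
\[
-\lambda t+\tfrac{\lambda t}{2}=-\tfrac{t^2}{2(V+bt/3)}=-\tfrac{t^2}{2p\sum_i k_i^2+2tb/3},
\]
which is exactly the stated denominator. Applying the same argument to $-f$, which satisfies the same hypotheses, and taking a union bound produces the factor $2$.

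The only step requiring care is Step 1: obtaining the factor $p$ in the variance. It comes from the exact representation of $D_i$ as $(X_i-p)$ times a predictable quantity bounded by $k_i$. That representation uses both the independence of the coordinates (for the coupling) and the fact that $X_i$ takes only two values.
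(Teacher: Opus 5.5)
Your proof is correct. Note that the paper does not prove this lemma; it simply quotes it from the literature (Warnke~\cite{War16} and \cite[Chapter~7]{AS16}).

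Your argument is the standard derivation behind the cited inequality: a Doob martingale combined with a Bernstein/Freedman exponential-moment bound. The exact representation $D_i=(X_i-p)\bigl(g_i(1)-g_i(0)\bigr)$ is the right key step. It gives the conditional variance $p(1-p)\bigl(g_i(1)-g_i(0)\bigr)^2\le p\,k_i^2$. Then the choice $\lambda=t/(V+bt/3)$ produces exactly the denominator $2p\sum_i k_i^2+2tb/3$.

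Compared with the citation, your route makes the paper self-contained. It also yields a slightly sharper bound: $p(1-p)$ in place of $p$, and increments bounded by $\max(p,1-p)k_i$. The cited source instead provides more general ``typical'' bounded-difference versions, in which the Lipschitz condition only needs to hold off a rare event; the paper never uses this generality.

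One small imprecision: when $V=p\sum_i k_i^2=0$ and $t>0$, your $\lambda$ equals $3/b$ rather than lying strictly below it, so the bound $\lambda^2V/\bigl(2(1-\lambda b/3)\bigr)$ becomes $0/0$. In that case, however, either $p=0$ or every $k_i=0$. Either way $f(X)$ is almost surely constant and the claimed inequality is trivial. Alternatively, use the Bennett form $\exp\bigl(\sigma^2(e^{\lambda b}-1-\lambda b)/b^2\bigr)$, which is valid for all $\lambda>0$. You should dispatch this degenerate case separately.
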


We also need the following sufficient condition for the existence of a ``long edge'' in a polytope, see \cite[Proposition 2.6]{FKSS26}.

\begin{lemma}\label{lem: poly long edge exists}
Fix positive integers $k\le d$ and a polytope $P$ which is the convex hull of some subset of $V(Q^d)$.
Fix a $k$-dimensional face $F$ of $Q^d$, and two vertices $x,y\in P\cap F$ at Euclidean distance $\sqrt{k}$ from each other. 
If there is a $(k-1)$-dimensional face $F'\subseteq F$ such that $P\cap F'=\{x\}$, then $xy$ is an edge in $P$.
\end{lemma}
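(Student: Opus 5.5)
The statement to prove is Lemma~\ref{lem: poly long edge exists}. The plan is to exhibit a supporting hyperplane of $P$ whose intersection with $P$ is exactly the segment $[x,y]$.

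First normalise coordinates. Reflecting coordinates ($z_i\mapsto 1-z_i$) and permuting them are symmetries of $Q^d$ that preserve $0/1$ polytopes, so we may assume the following:
\begin{itemize}
\item $F=\{z: z_{k+1}=\dots=z_d=0\}$;
\item $x=0$;
\item $y=(1,\dots,1,0,\dots,0)$ with $k$ ones, which is the only point of $F$ at Euclidean distance $\sqrt k$ from $x$.
\end{itemize}
The facet $F'$ of $F$ contains $x=0$, so it is $\{z\in F: z_j=0\}$ for some $j\le k$. Relabel so that $j=1$.

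Next consider the linear functional
\[
  \ell(z)=\sum_{i=1}^k z_i - k z_1 - M\sum_{i>k} z_i
\]
for a large constant $M$, say $M=k+1$. We have $\ell(x)=\ell(y)=0$. Let $z$ be any vertex of $P$. Then $z\in\{0,1\}^d$, and we show $\ell(z)\le 0$, with equality only if $z\in\{x,y\}$.
\begin{itemize}
\item If $z\notin F$, some coordinate $z_i$ with $i>k$ equals $1$. The first $k$ coordinates contribute at most $k-1$, so $\ell(z)\le k-1-M<0$.
\item If $z\in F$ and $z_1=0$, then $z\in F'\cap P=\{x\}$, so $z=x$.
\item If $z\in F$ and $z_1=1$, then $\ell(z)=\sum_{i\le k}z_i-k\le 0$, with equality iff $z=y$.
\end{itemize}

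So $\ell\le 0$ on all vertices of $P$, hence on $P$. The face $\{z\in P:\ell(z)=0\}$ is the convex hull of the vertices attaining $0$, namely $\operatorname{conv}\{x,y\}$. Since $x\ne y$, this face is $1$-dimensional, so $xy$ is an edge of $P$.

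The only delicate point is the normalisation: we need that $x,y$ are antipodal in $F$ and that $F'$ is a facet of $F$ through $x$. The distance condition gives the first, and the hypothesis $P\cap F'=\{x\}$ forces $x\in F'$, which gives the second.
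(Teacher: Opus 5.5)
Your proof is correct. After normalising to $x=0$, $F=\{z_{k+1}=\dots=z_d=0\}$ and $F'=\{z\in F: z_1=0\}$, the functional $\ell$ is nonpositive on $\{0,1\}^d\cap P$ and vanishes only at $x$ and $y$. Your case split is exhaustive, and the hypothesis $P\cap F'=\{x\}$ is used exactly where it is needed, namely to exclude points of $F$ with $z_1=0$. Hence $\{\ell=0\}$ exposes the segment $[x,y]$ as a face of $P$. Your justification of the normalisation is also sound: $y$ is the unique point of $F$ at distance $\sqrt{k}$ from the cube vertex $x$, and $P\cap F'=\{x\}$ forces $x\in F'$.

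The paper does not prove this lemma; it imports it from \cite[Proposition 2.6]{FKSS26}. So your argument is a self-contained substitute rather than a reconstruction of something in the text.

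Conceptually, your penalty term $-M\sum_{i>k}z_i$ (any $M>k-1$ works) does in one step the usual two-step reasoning. First, $P\cap F$ is a face of $P$, because $F$ is a face of $[0,1]^d\supseteq P$ and a face of a face is a face. Then $\sum_{i\le k}z_i-kz_1$ alone exposes $[x,y]$ inside $P\cap F$. This face-restriction principle is the same one the paper invokes in Remark~\ref{remark: edge via cube}.

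Your version gives an explicit certificate for the edge. The two-step version makes more transparent the locality used later in the paper, namely that whether $xy$ is an edge depends only on $P\cap F$.
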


\begin{remark}\label{remark: edge via cube}
By Lemma~\ref{lem: poly long edge exists} for $P^d_p$, for every $u,v\in V(Q^d)$ such that $V(Q^d_p[u,v])=\{u,v\}$, $uv$ is an edge in $P^d_p$.
More generally, the event that $uv$ is an edge in $P^d_p$ only depends on the states of the vertices in $Q^d[u,v]$: indeed, for every closed half-space $H$ containing $Q^d$ with $\partial H\cap Q^d=Q^d[u,v]$, 
we have (by e.g.\ \cite[Claim 2.8]{FKSS26})
\[P^d_p\cap \partial H = \mathrm{conv}(Q^d_p)\cap \partial H = \mathrm{conv}(Q^d_p\cap \partial H).\]
\end{remark}

Finally, we will also use the following \cite[Theorem 4.1]{FKSS26}.

\begin{lemma}\label{lem: Wojtek}
    Let $p\in[1-e^{-4},1]$. With high probability for every $S\subseteq V(P_p^{d})$ with $|S|\leq 3/4\cdot 2^d$ it holds that 
    \[e_{P_p^d}(S,S^c)\geq |S|/8\cdot \log_2(2^d/|S|).\]
\end{lemma}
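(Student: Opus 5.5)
The plan is to compare $G_{P^d_p}$ with the hypercube and apply Harper's inequality (Lemma~\ref{l: Harper edge}) to a suitably enlarged set. Throughout, let $M\coloneqq V(Q^d)\setminus Q^d_p$ be the set of deleted vertices, which has density at most $e^{-4}$.

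First I would record two deterministic consequences of Lemma~\ref{lem: poly long edge exists}.
\begin{enumerate}[(i)]
\item If $u,v\in Q^d_p$ are adjacent in $Q^d$, then $uv$ is an edge of $P^d_p$. This is the case $k=1$.
\item If $u\in Q^d_p$, $w\in M$ is a $Q^d$-neighbour of $u$, and $v\in Q^d_p$ is another $Q^d$-neighbour of $w$, then $uv$ is an edge of $P^d_p$. This is the case $k=2$: take $F$ to be the $2$-face spanned by $u,w,v$ and $F'=\{u,w\}$, so that $P\cap F'=\{u\}$.
\end{enumerate}
By (ii), every deleted vertex $w$ makes its retained neighbours pairwise adjacent in the polytope. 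A given distance-$2$ pair $uv$ has exactly two common neighbours in $Q^d$, so such a polytope edge arises from at most two middle vertices $w$.

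The only probabilistic input I would use is that whp every vertex of $Q^d$ has at least $(1-2e^{-4})d$ retained neighbours and at most $2e^{-4}d$ deleted neighbours. This follows from Lemma~\ref{l: chernoff}, or from a direct binomial tail estimate, together with a union bound over $2^d$ vertices. Since $p\ge 1-e^{-4}$, the tail probability $\binom{d}{2e^{-4}d}e^{-8e^{-4}d}$ is much smaller than $2^{-d}$. I would also use $|M|\le 2e^{-4}2^d$ whp. After this, the argument is deterministic, so no union bound over sets $S$ is needed.

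Fix $S\subseteq Q^d_p$ with $|S|\le \frac34 2^d$. Let $T\coloneqq S\cup M_S$, where $M_S$ is the set of $w\in M$ having at least half of their retained neighbours in $S$. Then $|S|\le|T|\le(\frac34+2e^{-4})2^d$. Harper's inequality gives
\[e_{Q^d}(T,T^c)\ge |T|\log_2(2^d/|T|),\]
and this is at least an absolute constant times $|S|\log_2(2^d/|S|)$. Here I use that $x\mapsto x\log_2(2^d/x)$ is increasing up to $2^d/e$, and that $\log_2(2^d/|T|)$ is bounded below by a positive constant near the top of the range. I split the boundary edges of $T$ in $Q^d$ into three types.
\begin{enumerate}[(a)]
\item Edges between $S$ and $Q^d_p\setminus S$. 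These are polytope boundary edges by (i).
\item Edges $uw$ with $u\in S$ and $w\in M\setminus M_S$. Such a $w$ has at least $\frac12(1-2e^{-4})d-1$ retained neighbours outside $S$. Each of them, by (ii), gives a polytope edge from $u$ to $Q^d_p\setminus S$.
\item Edges $wv$ with $w\in M_S$ and $v\in Q^d_p\setminus S$. Here $w$ has many retained neighbours in $S$, and each gives a polytope edge from $v$ into $S$ by (ii).
\end{enumerate}
Edges between $M_S$ and $M\setminus M_S$ also lie in the boundary of $T$. They must be discarded, and I bound their number by $2e^{-4}d\cdot|M_S|$. Since every $w\in M_S$ has at least $\frac12(1-2e^{-4})d$ edges into $S$, this is at most a $5e^{-4}$ fraction of $e_{Q^d}(S,M_S)$. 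In turn, $e_{Q^d}(S,M_S)\le 2e^{-4}d|S|$ by the maximum degree into $M$.

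For the final count I assign each polytope edge $uv$ of type (ii) to a $Q^d$-edge from $\{u,v\}$ into a middle vertex $w$. Each such polytope edge is produced at most twice per $Q^d$-edge, and each $Q^d$-boundary edge of types (b) and (c) produces $\Omega(d)$ polytope edges. Hence
\[e_{P^d_p}(S,S^c)\ge c\, e_{Q^d}(T,T^c)-O(e^{-4})\,d|S|.\]
The main obstacle is making this error term genuinely lower order for every size of $S$, in particular when $S$ is tiny and $\log_2(2^d/|S|)\approx d$. For that regime I would use the degree bound directly: a single $u\in S$ whose neighbours are deleted still gains about $d^2$ polytope edges via (ii). I would then balance the constants, using $p\ge 1-e^{-4}$ where needed, to reach the stated factor $1/8$.
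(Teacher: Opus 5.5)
The paper does not prove this lemma; it imports it verbatim from \cite[Theorem 4.1]{FKSS26}, so your argument has to stand on its own. Your deterministic ingredients (i) and (ii), and the ``at most two middle vertices'' count, are correct. Two steps fail, however.

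First, the probabilistic input is false. The number of deleted neighbours of a vertex is $\mathrm{Bin}(d,1-p)$. For $p=1-e^{-4}$ one has $\mathbb P[\mathrm{Bin}(d,e^{-4})\ge 2e^{-4}d]=e^{-(c+o(1))d}$ with $c\approx 0.007$, which is far larger than $2^{-d}$. In fact your expression $\binom{d}{2e^{-4}d}e^{-8e^{-4}d}$ is exponentially large, not small. Whp some vertices have roughly $d/3$ deleted neighbours. What does hold whp is a weaker bound such as ``every vertex of $Q^d$ has at least $0.6d$ retained neighbours'': the relevant relative entropy is about $0.94>\ln 2$, so a union bound works. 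With only this bound, your estimate of $e(M_S,M\setminus M_S)$ as a small fraction of $e(S,M_S)$ collapses.

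Second, and more seriously, even granting your constants, the inequality $e_{P^d_p}(S,S^c)\ge c\,e_{Q^d}(T,T^c)-O(e^{-4})\,d|S|$ is useless for $|S|=\Theta(2^d)$. There the target is $\Theta(|S|)$ but the error is $\Theta(d|S|)$. This is not slack in your bound: for $p=1-e^{-4}$ and $S=\{x\in Q^d_p: x_1=0\}$, the quantity $e(S,M_S)$ really is of order $e^{-4}d|S|$. The small-$S$ regime you single out is harmless, since there $\log_2(2^d/|S|)\approx d$ absorbs such a loss, and your proposed fix does nothing for large $S$.

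The missing idea is to charge each edge $ww'$ with $w\in M_S$, $w'\in M\setminus M_S$ (say in direction $i$) to \emph{polytope} boundary edges rather than to $d|S|$. There are two cases.
\begin{enumerate}[(1)]
\item If $w$ has a retained neighbour outside $S$, or $w'$ has one inside $S$, charge $ww'$ to the corresponding type (c) or type (b) edge. That edge carries at least $0.3d$ polytope edges and receives at most $d$ charges.
\item Otherwise all retained neighbours of $w$ lie in $S$ and all those of $w'$ lie outside $S$. Then there are at least $0.2d$ directions $j\ne i$ with $w+\mathbf{1}_{\{j\}}$ and $w'+\mathbf{1}_{\{j\}}$ both retained. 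Each such $j$ gives a type (a) edge between these two vertices, and each type (a) edge arises in this way from at most $d-1$ pairs $ww'$.
\end{enumerate}
Together these give $e_{P^d_p}(S,S^c)\ge e_{Q^d}(T,T^c)/7$ for large $d$.

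To finish, use $|M|\le(1+o(1))e^{-4}2^d$ rather than $2e^{-4}2^d$; with the latter the comparison fails narrowly at the top of the range. One then checks that $|T|\log_2(2^d/|T|)\ge\frac78|S|\log_2(2^d/|S|)$ for all $|S|\le\frac34 2^d$, the worst case being $|S|=\frac34 2^d$. This yields the stated constant $1/8$. Without some such charging argument, the proof does not go through.
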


\section{Renormalisation}\label{subsec:renorm}
This section provides the renormalisation step. We partition $Q^d$ into $b$-dimensional fibres and choose $b$ so that a fibre is occupied with probability $\rho=1-(1-p)^{2^b}$, where $b$ is chosen such that the projected model lives at the regime where the mixed-percolation input from later sections will apply. The key point is that expansion in the projected polytope can be lifted back to the original polytope: Lemma~\ref{lem:normalization} shows that every projected edge produces a genuine edge of $P_p^d$.

Partition the hypercube $Q^d = Q^{d-b}\times Q^b$ into $2^{d-b}$ subcubes of dimension $b$ according to the projection of each vertex on the first $d-b$ coordinates: in particular, the vertices of $Q^{d-b}$ are identified with the $b$-dimensional cubes in the partition.
We define $Q\subseteq Q^{d-b}$ to be the induced subgraph obtained by keeping only the vertices whose corresponding cube of dimension $b$ in the above partition of $Q^d$ contains at least one vertex present in $Q^d_p$. 
Note that $Q$ is a random subgraph of $Q^{d-b}$ obtained after vertex percolation with parameter $\rho := 1-(1-p)^{2^b}$; in particular, we see $P^{d-b}_{\rho}$ as the convex hull of the vertices of $Q$. 
Further, we define a projection map $\pi := \pi_{b,d}: \mathbb R^d\to \mathbb R^{d-b}$ by setting 
\[\pi((a_1,\ldots,a_d)) = (a_1,\ldots,a_{d-b}).\]
\begin{lemma}\label{lem:normalization}
For every edge $uv\in E(P^{d-b}_\rho)$ and every vertex $w\in \pi^{-1}(u)$, $P^d_p$ contains an edge between $w$ and some vertex in $\pi^{-1}(v)$.
\end{lemma}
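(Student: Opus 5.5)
The plan is to pull the supporting hyperplane of the edge $uv$ back through $\pi$, and then use a one-parameter family of linear functionals (a simplex-type pivot) to find an edge leaving $w$. Throughout, $w$ is understood to be a vertex of $Q^d_p$ lying in the fibre $\pi^{-1}(u)$, so it is a vertex of $P^d_p$. Since $u$ and $v$ are vertices of $P^{d-b}_\rho$, both fibres contain points of $Q^d_p$.

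\emph{Step 1: lift the face.} Since $uv$ is an edge of $P^{d-b}_\rho=\mathrm{conv}(V(Q))$, there is a linear functional $c$ on $\mathbb R^{d-b}$ and $\beta\in\mathbb R$ such that $c\cdot x\le \beta$ for all $x\in V(Q)$, with equality exactly for $x\in\{u,v\}$. Consider the functional $c\circ\pi$ on $\mathbb R^d$. For every $y\in Q^d_p$ we have $\pi(y)\in V(Q)$, so $c\cdot\pi(y)\le\beta$. Equality holds exactly when $y\in \pi^{-1}(u)\cup\pi^{-1}(v)$. Hence
\[F:=\mathrm{conv}\big(Q^d_p\cap(\pi^{-1}(u)\cup\pi^{-1}(v))\big)\]
is a face of $P^d_p$. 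Its vertex set splits as $A\cup B$, where $A=Q^d_p\cap\pi^{-1}(u)\ni w$ and $B=Q^d_p\cap\pi^{-1}(v)\neq\emptyset$. Because faces of faces are faces, it suffices to find an edge of $F$ from $w$ to $B$.

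\emph{Step 2: a pivoting functional.} Let $\ell$ be an affine function equal to $0$ on $\{\pi=u\}$ and $1$ on $\{\pi=v\}$; for instance, take $\ell$ linear in the first $d-b$ coordinates, proportional to $(v-u)\cdot(\pi(x)-u)$. Let $g(x)=\sum_{i=d-b+1}^{d}(2w_i-1)x_i$. Over the points of the cube $\pi^{-1}(u)$, $g$ is uniquely maximised at $w$. For $t\in\mathbb R$ set $\varphi_t=g+t\ell$.

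On $A$ we have $\varphi_t=g$, so $w$ is always the unique maximiser of $\varphi_t$ within $A$. On $B$ we have $\varphi_t=g+t$. For $t$ very negative, $w$ is therefore the unique maximiser of $\varphi_t$ over $F$. Let $t^*$ be the smallest $t$ with $\max_B\varphi_t=\varphi_t(w)$; this is explicitly $t^*=g(w)-\max_B g$. At $t=t^*$, the set of maximisers of $\varphi_{t^*}$ over the vertices of $F$ is $\{w\}\cup B'$, where $B'\subseteq B$ is the nonempty set of maximisers of $g$ on $B$. Hence $G:=\mathrm{conv}(\{w\}\cup B')$ is a face of $F$.

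\emph{Step 3: pyramid structure.} The set $B'$ lies in the affine subspace $\{\pi=v\}$, while $w$ does not. So $G$ is a pyramid with apex $w$ over the base $\mathrm{conv}(B')$. In a pyramid the apex is adjacent to every vertex of the base. Concretely, for a vertex $z$ of $\mathrm{conv}(B')$, take a functional exposing $z$ within the base, extend it to be constant along the direction of $w$ relative to the hyperplane containing the base, and it exposes the segment $wz$. Thus $wz$ is an edge of $G$, hence of $F$, hence of $P^d_p$, with $z\in\pi^{-1}(v)$.

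The only mildly delicate point is Step 3: checking that the apex argument is valid when the base $\mathrm{conv}(B')$ is lower-dimensional. That it is a genuine pyramid uses only that $w\notin\mathrm{aff}(B')$, which holds since $\mathrm{aff}(B')\subseteq\{\pi=v\}$. Everything else is routine convexity.
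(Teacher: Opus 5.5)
Your proof is correct. Step 1 is the same lifting of the supporting hyperplane that the paper uses; after that you take a different route.

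\textbf{The paper's route.} It affinely identifies $\mathrm{conv}(\pi^{-1}(\{u,v\}))$ with a $(b+1)$-dimensional cube and picks $w'\in B$ nearest to $w$ in the last $b$ coordinates. It then invokes the cited cube lemma (Lemma~\ref{lem: poly long edge exists}, from FKSS): the facet $R[w,w']\cap\pi^{-1}(v)$ of the subcube $R[w,w']$ contains no surviving point other than $w'$, so $ww'$ is an edge.

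\textbf{Your route.} On the fibre coordinates, $g(x)=\sum_{i>d-b}w_i-d_H(w,x)$. So your $B'$ is exactly the set of the paper's candidates $w'$. The pivot functional $\varphi_{t^*}$ exposes the pyramid $\mathrm{conv}(\{w\}\cup B')$, and the apex argument finishes. You reach the same neighbours, but you reprove from scratch the special case of the cited lemma that is needed.

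\textbf{Comparison.} The paper's argument is shorter because that lemma is already in its toolkit. Yours is self-contained and never needs the affine identification with a cube. The case where $u$ and $v$ are far apart in Hamming distance is therefore handled transparently.

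Your argument also proves a more general fact. If $F=\mathrm{conv}(A\cup B)$ with $A$ and $B$ lying in two distinct parallel hyperplanes, then every vertex of $\mathrm{conv}(A)$ is adjacent in $F$ to some point of $B$. The only input is a functional exposing $w$ within $A$, combined with a functional constant on each of the two hyperplanes.
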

\begin{proof}
As $uv\in E(P^{d-b}_{\rho})$, there is a vector $\mathbf{v}=(\mathbf{v}_1,\ldots,\mathbf{v}_{d-b})\in \mathbb R^{d-b}$ and a number $c$ such that
\[\langle \mathbf{v},u\rangle = \langle \mathbf{v},v\rangle = c\qquad \text{and}\qquad \forall z\in V(P^{d-b}_\rho)\setminus \{u,v\},\; \langle \mathbf{v},z\rangle > c.\]
Define the vector $\mathbf{v}_0 = (\mathbf{v}_1,\ldots,\mathbf{v}_{d-b},0,\ldots,0)\in \mathbb R^d$ and denote by $H_c$ the $(d-1)$-dimensional hyperplane $\{w\in \mathbb R^d: \langle \mathbf{v}_0,w\rangle = c\}$.
Then, by the choice of $\mathbf{v}$, 
\[R := \pi^{-1}(\{u,v\})\subset H_c\qquad \text{and}\qquad \pi^{-1}(V(P^{d-b}_\rho)\setminus \{u,v\})\cap H_c=\emptyset.\]
Further, note that $R \supseteq R_p := V(P^d_p)\cap H_c$ and that
$R$ is isomorphic to $Q^{b+1}$. Indeed, $\pi^{-1}(u)$ and
$\pi^{-1}(v)$ are $b$-dimensional subcubes whose first $d-b$
coordinates coincide with the ones of $u$ and $v$, respectively. Since $u$ and
$v$ form an edge in the convex hull, (possibly after affine transformation) these two subcubes together
form a $(b+1)$-dimensional subcube. Thus, somewhat abusively, we
identify $R$ with $Q^{b+1}$ throughout the rest of the proof.

To conclude, it suffices to show that $w$ is contained in at least one edge in the 1-skeleton of the polytope $\mathrm{conv}(R_p)$.
To this end, denote by $w'\in V(P^d_p)\cap \pi^{-1}(v)$ a vertex such that $w'$ is the closest vertex to $w$ for the Hamming distance in the cube $R$ which percolates in $R_p$.
Then, the smallest subcube $R[w,w']$ of $R$ containing each of $w,w'$ contains the face 
\[F = R[w,w']\cap \pi^{-1}(v)\]
where each vertex in $F\setminus \{w'\}$ is closer to $w$ than $w'$ for the Hamming distance.
Thus, $F\cap V(R[w,w'])=\{w'\}$, which is enough to conclude that $ww'$ is an edge in the graph of $P^d_p$ by Lemma \ref{lem: poly long edge exists}.
\end{proof}

We will also make use of the following lemma.
\begin{lemma}\label{lem: degrees}
Fix $p\in (0,1-\varepsilon]$. Then, there exists $\alpha>0$ such that the following hold with high probability.
    \begin{itemize}
        \item If $p\geq 1/2-\varepsilon$, then the number of vertices in $Q_p^d$ with degree at least $(1-\alpha)d$ is at most $2^{(1-\alpha^3) d}$.
        \item If $p\leq 1/2-\varepsilon$, then every vertex in $Q_p^d$ has degree at most $(1-\alpha)d$.
    \end{itemize}
\end{lemma}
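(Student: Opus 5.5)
Both parts follow from a first moment computation over the degrees in $Q^d_p$. A vertex $v$ retained in $Q^d_p$ has degree $\deg(v)\sim\mathrm{Bin}(d,p)$, independent of whether $v$ itself is retained. We bound $\mathbb{P}[\mathrm{Bin}(d,p)\ge (1-\alpha)d]$ using the union bound over the sets of at most $\alpha d$ missing neighbours:
\[
\mathbb{P}[\deg(v)\ge (1-\alpha)d]\le \binom{d}{\alpha d}p^{(1-\alpha)d}\le 2^{H(\alpha)d}p^{(1-\alpha)d},
\]
where $H$ is the binary entropy. Alternatively, the standard Chernoff/KL bound $\exp(-d\,D(1-\alpha\,\|\,p))$ gives the same estimate. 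Write $N$ for the number of vertices of $Q^d_p$ with degree at least $(1-\alpha)d$.

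\emph{Case $p\le 1/2-\eps$.} By the first moment, $\mathbb{E}N\le 2^d\cdot p\cdot 2^{H(\alpha)d}p^{(1-\alpha)d}$. We have $2p\le 1-2\eps$, so $2^d p^{(1-\alpha)d}=(2p)^d p^{-\alpha d}$. The factor $p^{-\alpha d}$ is the delicate point, because $p$ may be as small as $2^{-\Theta(d)}$. Handle it by bounding more crudely: $\mathbb{E}N\le 2^d p\cdot 2^{H(\alpha)d}p^{(1-\alpha)d}=2^{(1+H(\alpha))d}p^{1+(1-\alpha)d}$, which is increasing in $p$. Hence it suffices to check $p=1/2-\eps$, where the bound equals $\big(2^{1+H(\alpha)}(1/2-\eps)^{1-\alpha}\big)^d$. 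As $\alpha\to0$ the base tends to $1-2\eps<1$, so for $\alpha=\alpha(\eps)$ small enough it is at most $(1-\eps)^d=o(1)$. Markov's inequality then gives that whp no retained vertex has degree above $(1-\alpha)d$.

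\emph{Case $1/2-\eps\le p\le 1-\eps$.} We need $N\le 2^{(1-\alpha^3)d}$ whp. Again $\mathbb{E}N\le 2^d\,2^{H(\alpha)d}(1-\eps)^{(1-\alpha)d}$. Choose $\alpha$ small so that $2^{H(\alpha)}(1-\eps)^{1-\alpha}\le (1-\eps)^{1/2}$. Then $\mathbb{E}N\le 2^d(1-\eps)^{d/2}$. This is at most $2^{(1-2\alpha^3)d}$ once $2^{-2\alpha^3}\ge (1-\eps)^{1/2}$, which also holds for $\alpha$ small. Markov's inequality then yields $\mathbb{P}[N>2^{(1-\alpha^3)d}]\le 2^{-\alpha^3 d}=o(1)$.

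Finally, take $\alpha(\eps)$ to be the minimum of the two choices; decreasing $\alpha$ only weakens both conclusions, so this single value works in both cases. The only real obstacle is the uniformity in very small $p$ in the second bullet. The monotonicity-in-$p$ trick handles it: the expected count is increasing in $p$, so the worst case is $p=1/2-\eps$. No concentration beyond Markov's inequality is needed.
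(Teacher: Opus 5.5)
Your proof is correct and follows essentially the same route as the paper. Both arguments use a first-moment bound on the number of high-degree vertices with Markov's inequality (a union bound in the second case), replacing $p$ by $1-\eps$, respectively $1/2-\eps$, to get bounds uniform in $p$; the paper bounds the tail via Chernoff in the first bullet and via $\sum_{i\ge(1-\alpha)d}\binom{d}{i}(1/2-\eps)^i$ in the second, where your entropy estimate does the same job. The ``delicate point'' you flag is not really one, since $p\le 1/2-\eps$ gives $p^{(1-\alpha)d}\le(1/2-\eps)^{(1-\alpha)d}$ directly, which is exactly the monotonicity you end up using.
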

\begin{proof}
We begin with the first item. By Lemma \ref{l: chernoff}, the probability a vertex has degree at least $(1-\alpha)d$ is at most
\begin{align*}
    \mathbb P\left[\mathrm{Bin}(d,1-\eps)\ge (1-\alpha) d\right]\le \exp\left(-(\eps-\alpha)^2d/3\right).
\end{align*}
Thus, the expected number of such vertices is at most $2^{d-(\eps-\alpha)^2d/3}$. Therefore, by Markov's inequality, with high probability there are at most $2^{(1-\alpha^3)d}$ such vertices, where we choose $\alpha>0$ sufficiently small with respect to $\eps$.

For the second item, 
the probability a vertex has degree at least $(1-\alpha)d$ is at most
\begin{align*}
    \mathbb P\left[\mathrm{Bin}(d,1/2-\eps)\ge (1-\alpha) d\right]\le \sum_{i=(1-\alpha)d}^d\binom{d}{i}(1/2-\eps)^i\le \left[\left(2e/\alpha\right)^{\alpha}\cdot (1/2-\eps)^{(1-\alpha)}\right]^d.
\end{align*}
By choosing $\alpha>0$ small enough with repsect to $\eps$, the base of the above exponential is strictly less than $1/2$. Then, taking the union bound over the at most $2^d$ vertices of the hypercube completes the proof.
\end{proof}

\section{Expansion in the mixed-percolated hypercube}\label{sec: mixed perco}
This section contains the local random-graph input for the rest of the paper. 
We study mixed percolation on the hypercube, where vertices and
edges are retained independently, and prove that any set whose
vertices all have degree linear in $d$ has external
vertex-neighbourhood of order $\Omega(|S|/\sqrt d)$.
In particular, the goal of this section is to prove the following proposition, which may be of independent interest.

\begin{proposition}\label{prop: expansion of high degree}
Fix $p,q,\alpha\in (0,1]$ and sufficiently small positive constants $\eta\coloneqq \eta(p,q,\alpha)$ and $\gamma\coloneqq \gamma(\eta)$. 
The following holds with probability at least $1-2^{-\eta d^{3/2}}$: for every set $S\subseteq V(Q^d_{p,q})$ with $|S|\le 3|V(Q^d_{p,q})|/4$ and such that every $v\in S$ has degree at least $\alpha d$ in $Q^d_{p,q}$, we have $|N_{Q^d_{p,q}}(S)|\ge \gamma|S|/\sqrt{d}$.
\end{proposition}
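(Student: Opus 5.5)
We follow the thinning-and-sprinkling strategy sketched in Section~\ref{sec: outline}. Couple $Q^d_{p,q}$ with $Q^d_{p/2,q}$: first sample $V_{p}$ and $E_q$, then keep each vertex of $V_p$ independently with probability $1/2$ to obtain $V_{p/2}$. Call $S$ \emph{bad} if it violates the conclusion, that is, $|S|\le 3|V_p|/4$, every $v\in S$ has degree at least $\alpha d$ in $Q^d_{p,q}$, and $|N_{Q^d_{p,q}}(S)|<\gamma|S|/\sqrt d$. Let $\cB$ be the event that some bad $S$ exists, and suppose for contradiction that $\mathbb P[\cB]\ge 2^{-\eta d^{3/2}}$.

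\emph{Thinning.} Condition on $\cB$ and fix a bad $S$. With conditional probability $2^{-|N(S)|}\ge 2^{-\gamma|S|/\sqrt d}$, every vertex of $N_{Q^d_{p,q}}(S)$ is deleted in the thinning. Independently of this, by Lemma~\ref{l: chernoff} applied to each vertex together with Lemma~\ref{l: azuma}, the set $S' = S\cap V_{p/2}$ has size about $|S|/2$ with probability $1-2^{-\Omega(|S|)}$. The same estimates show that most vertices of $S'$ keep degree at least $\alpha d/3$ inside $S'$, since their neighbours outside $S$ have been deleted. The outcome is a set $S'$ in $Q^d_{p/2,q}$ with empty external neighbourhood, containing $\Omega(|S|)$ vertices of degree at least $\alpha d/3$, and occupying at most a $(3/4+o(1))$-fraction of $V_{p/2}$. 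So $\mathbb P[\cB']$ is at least roughly $\mathbb P[\cB]\cdot 2^{-\gamma |S|/\sqrt d}$, where $\cB'$ is the event that such a closed set exists.

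One issue is that $|S|$ is not fixed. We handle this by dyadically splitting according to $|S|\in[2^{j},2^{j+1})$ and running the argument for each scale separately. We also need to treat small $S$ (say $|S|\le 2^{d/2}$) separately: there, Lemma~\ref{l: harper vertex} in the ambient cube, together with the fact that a vertex of degree $\alpha d$ has $\Omega(d)$ neighbours, gives the required expansion directly by a union bound over local configurations.

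\emph{Showing $\cB'$ is unlikely.} This is the main obstacle. The task is to show that, with probability $1-2^{-\omega(\gamma|S|/\sqrt d)-\eta d^{3/2}}$, $Q^d_{p/2,q}$ has no closed set $S'$ containing many vertices of linear degree while leaving a constant fraction of $V_{p/2}$ outside.

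Consider the set $T$ of ``decent'' vertices in $S'$, those of degree at least $\alpha d/3$. Harper's inequality (Lemmas~\ref{l: Harper edge} and~\ref{l: harper vertex}) applied to $N_E[T]$ in the full cube gives $\Omega(|T|/\sqrt d)$ cube-boundary vertices. Each such boundary vertex $u$ must be absent from $V_{p/2}$ or have all its $q$-edges into $S'$ missing.

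To exploit this, we partition the cube into the subcubes given by fixing $\sqrt d$ coordinates (Lemma~\ref{lem: decent vertices}), so that decent vertices are spread across many pieces. We then find a large matching between decent vertices and the boundary, with each boundary vertex $u$ having $\Omega(1)$ edges toward $T$ (Lemma~\ref{lem: expanding matching}). The probability that all $\Omega(|T|/\sqrt d)$ of these boundary vertices are closed off is then $c^{|T|/\sqrt d}$ for some $c<1$, independently across the matching. This beats $2^{\gamma |S|/\sqrt d}$ provided $\gamma$ is small compared with $\log(1/c)$ and $\alpha$.

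The remaining difficulty is the union bound over the choices of $S'$. We will control it by encoding $S'$ through the component structure: a closed set is a union of components, and it is determined by its intersection with few seed vertices. We will also use edge sprinkling: split $E_q$ into $E_{q/2}\cup E_{q/2}'$, use the first half to fix the component structure, and use the second half for the independent closure event. This gives an entropy of order $2^{O(|S|\log d/d)}$, which is dominated by the bound above. Combining the scales and choosing $\eta$, then $\gamma$, small gives the claimed probability $1-2^{-\eta d^{3/2}}$.
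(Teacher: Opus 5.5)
Your thinning step matches the paper's: delete $N_{Q^d_{p,q}}(S)$ at cost $2^{-\gamma s/\sqrt d}$, then check that most of $S$ keeps degree $\Omega(\alpha d)$. The step ``showing $\cB'$ is unlikely'', however, has a genuine gap, in fact two interlocking ones.

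\emph{Entropy.} Encoding a closed set by few seed vertices yields only $2^{O(s\log d/d)}$ candidates if essentially all of $S'$ lies in, or next to, first-stage components of size at least of order $d^2$. One then chooses at most $s/d^2$ of the at most $2^d/d^2$ such components. You never justify this, and with your split $E_q=E_{q/2}\cup E'_{q/2}$ it fails, because the second round has constant density. A vertex then has first-stage degree below $\alpha d/8$ but final degree at least $\alpha d/4$ with probability only $e^{-\Theta(d)}$. That is not small enough to beat a union bound over vertices, so no analogue of Lemma~\ref{lem: decent vertices} holds. In general many such vertices exist, they can lie in small (even trivial) first-stage components, and an adversarially chosen $S'$ may contain many of them. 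The paper makes the sprinkle tiny, $\eps\le 8^{-8/\alpha^2}$, exactly so that Lemma~\ref{lem: decent vertices} applies. That lemma is a star-forest union bound, not a subcube partition. It shows that with probability $1-2^{-\Omega(s/\sqrt d)}$ at most $\eps^5 s/\sqrt d$ vertices gain $\alpha d/8$ neighbours from the sprinkle. Lemma~\ref{l: Harper edge} also shows that every first-stage component of size $<d^2$ has at least half its vertices of first-stage degree at most $\alpha d/8$. Together these put all but $O(\eps^5 s/(\alpha\sqrt d))$ vertices of $S$ into the $E_{q'}$-closed neighbourhood $A(\cC')$ of a family $\cC'$ of components of size at least $d^2$.

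\emph{Fresh randomness at the boundary.} You sprinkle only edges, so $V_{p/2}$ is fully exposed once you fix the first-stage component structure on which your union bound rests. Harper's inequality in $Q^d$ gives many boundary vertices, but after this conditioning they may all be absent, and then the closure event has probability one. Your bound $c^{|T|/\sqrt d}$ treats ``$u\notin V_{p/2}$'' as if it were still random. Edge-only sprinkling would need every union of first-stage components to have $\Omega(s/\sqrt d)$ $Q^d$-neighbours inside $V_{p/2}$, which is essentially the case $q=1$ of the proposition itself.

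The paper sprinkles vertices as well, writing $V_p=V_{p'}\cup V_\eps$:
\begin{itemize}
\item Given the first stage, $A(\cC')$ is a fixed subset of $Q^d$, possibly containing absent vertices, with $|A(\cC')|\in[s/2,(1-\eps)2^d]$. The upper bound holds because $A(\cC')$ avoids $V(Q^d_{p,q})\setminus S$.
\item Lemma~\ref{l: harper vertex} then gives a $Q^d$-matching of size $\Omega(\eps s/\sqrt d)$ leaving $A(\cC')$.
\item Each matching edge appears, together with both endpoints, in $(V_\eps,E_\eps)$ independently with probability at least $\eps^3$.
\item A union bound over the at most $2^{s/d}$ choices of $\cC'$ (Lemma~\ref{lem: expanding matching}) finishes.
\end{itemize}

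A smaller point concerns the small-set case. For $|S|<d^2$ the bound $2^{-\eta s/\sqrt d}$ is too weak, so that range needs a deterministic argument, namely Lemma~\ref{l: Harper edge}. It only works while $\log_2|S|$ is well below $\alpha d$, which is why the paper uses it just for $|S|\le d^2$. For $|S|$ up to $2^{d/2}$ there is no direct argument, and Lemma~\ref{l: harper vertex} in the ambient cube says nothing about $N_{Q^d_{p,q}}(S)$.
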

\noindent
Note that this bound is tight since the set of vertices below the middle layer(s) expands by a factor of~$\Theta(1/\sqrt{d})$. 

The proof of Proposition~\ref{prop: expansion of high degree} is split into two regimes.  
For very small sets, namely $|S|\le d^2$, the statement is essentially a counting argument. Since the induced subgraph on $S$ cannot contain too many edges by Harper's inequality, while every vertex of $S$ has degree at least $\alpha d$, most of the edges incident to $S$ must leave $S$. 
This already gives a much stronger lower bound on the size of the neighbourhood of $S$ than the one claimed. 

The interesting case is when $|S|$ is larger. The proof then follows a thinning--and--sprinkling argument, in the spirit of \cite{ADLZ25}. Recall the sets $V_p, E_q$ from the notation section. Observe that if there was a set $S$ with degrees at least $\alpha d$ which did not expand well in $Q^d_{p,q}$, then when moving to, say, $Q^d_{p/2,q}$, it is not too unlikely that this set $S$ has empty neighbourhood in $Q^d_{p/2,q}$, that is, it is disconnected from the rest of the graph there. On the other hand, one can show that typically, it still retains many vertices of high degree (that is, proportional to $d$). Now, fix some small density $\eps>0$, and choose $p', q'$ such that $(1-p')(1-\eps)=1-p/2$ and $(1-q')(1-\eps)=1-q$, repsectively. This way, $V_{p'}\cup V_{\eps}$ has the same distribution as $V_{p/2}$ and $E_{q'}\cup E_{\eps}$ has the same distribution as $E_q$. In Lemma~\ref{lem: decent vertices}, we show that sprinkling by $(V_\eps,E_\eps)$ does not typically create many new vertices of high degree inside a fixed candidate set. 
This lets us transfer any hypothetical bad set in $Q^d_{p,q}$ to a comparable bad set in the partially exposed graph $Q^d_{p',q'}$, while retaining most of the vertices with degree $\Theta(d)$. Next, in Lemma~\ref{lem: expanding matching}, we show that any such set, provided it consists mostly of high-degree vertices, must have a large matching leaving the neighbourhood of its large components. This is the input that turns `few external neighbours' into a concrete obstruction: after sprinkling, a positive fraction of this boundary matching survives, so the set cannot remain disconnected. Finally, Lemma~\ref{lem: sprinkling event} combines these two statements. It shows that the event that a large set of mostly high-degree vertices is disconnected after sprinkling is extremely unlikely. This is the key probabilistic step that rules out bad sets in the range $|S|>d^2$.

We begin by showing that, with suitably high probability, only few vertices in $Q^d$ are adjacent to many vertices in $V_\eps$ or incident to many edges in $E_\eps$.

\begin{lemma}\label{lem: decent vertices}
Fix $\eps,\alpha\in (0,1)$ with $\eps \le 8^{-8/\alpha^2}$ and consider an integer $s\in \{d^2,\ldots, 2^d\}$. 
Then, the probability that there are at least $s$ vertices in $Q^d=(V,E)$, each with at least $\alpha d$ neighbours in $V_\eps$ or incident to at least $\alpha d$ edges in $E_{\eps}$, is at most $2^{-s}$.
\end{lemma}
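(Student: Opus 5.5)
Call a vertex $v\in V$ \emph{heavy} if it has at least $\alpha d$ neighbours in $V_\eps$ or is incident to at least $\alpha d$ edges of $E_\eps$. The plan is a first-moment bound over candidate sets $T$ of $s$ heavy vertices. The obstacle is that heaviness of distinct vertices is not independent: neighbourhoods overlap, and an edge is shared by its two endpoints. I handle this by extracting, inside any such $T$, a large subfamily whose heaviness is witnessed by many \emph{distinct} random elements, and then applying a union bound.

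\textbf{Step 1 (split by type).} If there are $s$ heavy vertices, then at least $s/2$ of them are heavy because of $V_\eps$, or at least $s/2$ because of $E_\eps$. I treat the two types separately and show each has probability at most $2^{-s-1}$.

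\textbf{Step 2 (edge type).} Let $T$ be a set of $t=s/2$ vertices, each incident to at least $\alpha d$ edges of $E_\eps$. The edges of $E_\eps$ touching $T$ then number at least $t\alpha d/2$, since each edge is counted at most twice.
\begin{itemize}
\item Choose such a witnessing edge set $F$ with exactly $m=\lceil t\alpha d/2\rceil$ edges.
\item Each edge of $F$ is incident to $T$, so the number of choices of $F$ given $T$ is at most $\binom{td}{m}\le (2e/\alpha)^{m}$.
\item $\mathbb P[F\subseteq E_\eps]=\eps^m$.
\item The number of choices of $T$ is at most $\binom{2^d}{t}\le 2^{dt}$.
\end{itemize}
Multiplying, the probability is at most
\[
2^{dt}\,(2e\eps/\alpha)^{t\alpha d/2}.
\]
Because $\eps\le 8^{-8/\alpha^2}$, we have $(2e\eps/\alpha)^{\alpha/2}\le 2^{-2}$ and more; for instance $\eps^{\alpha/2}\le 8^{-4/\alpha}$. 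The bound is therefore at most $2^{-dt}\le 2^{-s-1}$.

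\textbf{Step 3 (vertex type).} Let $T$ be a set of $t$ vertices, each with at least $\alpha d$ neighbours in $V_\eps$. Let $W=N(T)\cap V_\eps$.
\begin{itemize}
\item Double counting gives $t\alpha d\le e(T,W)\le d|W|$, so $|W|\ge \alpha t$. This is too weak to beat the $2^{dt}$ choices of $T$.
\item Instead, I avoid enumerating $T$. Note that $T\subseteq\{v: |N(v)\cap W|\ge\alpha d\}$, so $T$ is determined up to a subset once $W$ is fixed.
\item I therefore union bound over the random set $W'=V_\eps\cap N(T)$ directly. By Harper/edge counting, the number of vertices having $\ge\alpha d$ neighbours in a set $W$ is at most $d|W|/(\alpha d)=|W|/\alpha$. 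Hence heavy vertices of this type number at most $|V_\eps\cap N(T)|/\alpha$, and it suffices to show that $V_\eps$ cannot contain a set $W$ with $|W|\ge \alpha t$ that is "concentrated" in this way.
\end{itemize}

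\textbf{Step 4 (the real argument for Step 3).} The Step 3 bound is not sufficient on its own. I use the fact that $T$ is covered by the neighbourhood of $W$, with $|N(W)|\le d|W|$.
\begin{itemize}
\item Fix $W$ of size $w\ge \alpha t$. I need $\mathbb P[W\subseteq V_\eps]=\eps^w$ to beat the number of relevant $W$.
\item To enumerate $W$ cheaply, I use connectivity. Each vertex of $T$ sees $\alpha d$ vertices of $W$, so in the square graph $Q^d(\le 2)$ the set $W\cup T$ splits into components, each containing at least $\alpha d$ vertices of $W$.
\item A connected set of size $r$ in a graph of maximum degree $d^2$ can be enumerated from a root in at most $(ed^2)^{r}$ ways.
\item This gives $\eps^{w}(ed^2)^{w+t}$ times root choices, and the root choices are the costly part: $2^{d}$ per component. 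There are at most $w/(\alpha d)$ components, so the roots cost at most $2^{w/\alpha}$.
\end{itemize}
The remaining factor $d^{2(w+t)}$ is the main obstacle. To remove it, I would instead restrict to a sub-structure: pick disjoint "stars" centred at vertices of $T$ with $\alpha d$ leaves in $W$, chosen greedily. Since each $W$-vertex lies in at most $d$ stars, I can keep about $t/d$ stars with disjoint leaf sets. This yields probability at most
\[
2^{dt/d}\binom{d}{\alpha d}^{t/d}\eps^{\alpha t}\le (2\cdot 2^d\eps^{\alpha d})^{t/d}\cdot 2^{t}.
\]
With $\eps\le 8^{-8/\alpha^2}$ this is at most $2^{-2t}=2^{-s}$, as required. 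The place I expect difficulty is making the disjoint-star extraction lose only a factor $d$ while the $2^d$ cost of choosing each centre is still absorbed by $\eps^{\alpha d}$. The exponent condition on $\eps$ is exactly what absorbs it, and the lower bound $s\ge d^2$ guarantees $t/d\ge 1$.
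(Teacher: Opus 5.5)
\textbf{Edge type.} Your Step 2 is correct, and it takes a different, more direct route than the paper for that half. The $E_\eps$-edges touching $T$ are at least $t\alpha d/2$ distinct random elements, so $\eps^{t\alpha d/2}$ beats the $2^{dt}$ choices of $T$.

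\textbf{Gap in the vertex type.} You abandon Step 3 and the connectivity part of Step 4 yourself, so everything rests on the final disjoint-star extraction, and the count there is not justified. That each $W$-vertex lies in at most $d$ stars does \emph{not} give about $t/d$ stars with pairwise disjoint leaf sets. A star with $\alpha d$ leaves can share a leaf with up to $\alpha d(d-1)$ other stars, so the greedy (conflict-graph) argument only guarantees $r\ge t/(\alpha d^2)$. Plugging this into your first-moment bound gives
\[
\Bigl(2^d\tbinom{d}{\alpha d}\eps^{\alpha d}\Bigr)^{r}\le (4\eps^{\alpha})^{dr}=(4\eps^{\alpha})^{t/(\alpha d)}=2^{-\Theta(s/d)}
\]
for fixed $\eps,\alpha$. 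This misses the required $2^{-s}$ by exactly the factor $d$ in the exponent that you flagged as the expected difficulty. So the crux of the lemma is left open. The problem is the requirement that each kept star's \emph{entire} witness set be disjoint from all the others.

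\textbf{How to repair it.} Only ask for a constant fraction of fresh witnesses per centre. In your framework, greedily pick centres $v\in T$ that have at least $\alpha d/2$ $V_\eps$-neighbours outside the set $L$ of leaves used so far, and assign exactly $\alpha d/2$ of them to $v$. A vertex of $T$ is blocked only if it has more than $\alpha d/2$ neighbours in $L$. There are at most $d|L|/(\alpha d/2)=2|L|/\alpha$ such vertices, which is at most $rd$ after $r$ steps, so you get $r\ge t/(d+1)$. Then the union bound gives $\bigl(4\eps^{\alpha/2}\bigr)^{dr}\le 2^{-\Omega(t)}$, which suffices.

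\textbf{What the paper does.} It handles both types at once and never splits. From the heavy set it fixes, deterministically, a maximal family of \emph{vertex-disjoint} stars in $Q^d$ centred at heavy vertices, each with $(1-\alpha/2)d$ leaves. Double counting edges into the star forest shows there are at least $\alpha s/(4d)$ such stars. Each star misses only $\alpha d/2$ neighbours of its centre, so heaviness forces at least $\alpha d/2$ successes (leaves in $V_\eps$ or star edges in $E_\eps$) inside each star. Hence $X_W\ge\alpha^2 s/8$ while $X_W$ is dominated by $\mathrm{Bin}(\alpha s,\eps)$. A union bound over only $2^{\alpha s/4}$ centre sets then finishes.
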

\begin{proof}
We say that a set $U$ has the $\alpha$-star property if there are $|U|$ vertex-disjoint stars in $Q^d$ with centres in $U$ and each with $(1-\alpha/2)d$ leaves.
We show that, for every set $W'\subseteq V(Q^d)$ with $|W'|\ge s$, there exists  $W\subseteq W'$ with $|W| = \alpha s/(4d)$ having the $\alpha$-star property.
Indeed, fix a maximal subset $U\subseteq W'$ with this property and denote by $F_U$ the corresponding star forest. 
Suppose for contradiction that $|U|<\alpha s/(4d)$. Hence, $|V(F_U)|\leq \left((1-\alpha/2)d+1\right)|U|<\alpha s/4$.
        Further, by maximality of $U$, every vertex $v\in W'\setminus V(F_U)$ has at most $(1-\alpha/2)d$ neighbours in $V(Q^d)\setminus V(F_U)$. Thus, 
    \begin{align*}
     e(W'\setminus V(F_U), V(F_U))\ge |W'\setminus V(F_U)|\alpha d/2\ge (|W'|-|V(F_U)|)\alpha d/2\ge (1-\alpha/4)\alpha sd/2.
    \end{align*}
    On the other hand, $F_U$ is incident to at most $|V(F_U)|d\le |U|d^2$ edges, and therefore
    \begin{align*}
        |U|d^2\ge (1-\alpha/4)\alpha sd/2\implies |U|\ge \frac{(1-\alpha/4)\alpha s}{2d}\ge \frac{\alpha s}{4d},
    \end{align*}
    a contradiction.  

    Now, fix $W$ and $F_W$ as above, and set $X_W\coloneqq|V(F_W)\cap V_\eps|+|F_W\cap E_\eps|$. Since $\frac{\alpha s}{4d}+2\left(1-\frac{\alpha}{2}\right)d\cdot\frac{\alpha s}{4d}< \alpha s$, we have that $X_W$ is dominated by $\mathrm{Bin}(\alpha s,\varepsilon)$. Hence,
    \begin{align*}
    \mathbb P[X_W\geq \alpha^2 s/8]\leq\sum_{i=\alpha^2 s/8}^{\alpha s}\binom{\alpha s}{i}\varepsilon^{i}\leq 2^{\alpha s}\cdot \varepsilon^{\alpha^2s/8}\leq(2\eps^{\alpha^2/8})^{s}\leq 4^{-s},
    \end{align*}
    where for the last inequality we used that $\eps\le 8^{-8/\alpha^2}$. Since there are at most $(2^d)^{\alpha s/(4d)}$ choices of $W$, by a union bound, the probability there exists $W$ with $X_W\geq \alpha^2 s/8$ is at most $(2^d)^{\alpha s/(4d)}\cdot 4^{-s}\le 2^{-s}$.

    Finally, assume that there are more than $s$ vertices with at least $\alpha d$ neighbours in $V_\eps$ (in $Q^d$) or incident to at least $\alpha d$ edges in $E_{\eps}$, and denote this set of vertices by $W'$. 
    Thus, there exists a subset $W\subseteq W'$ of size $\alpha s/(4d)$ having the $\alpha$-star property (so $F_W$ is well-defined). Then, by assumption on $W'$, we have that 
    $$X_W=|V(F_W)\cap V_{\eps}|+|F_W\cap E_{\eps}|\ge \frac{\alpha d}{2}\cdot |W|=\frac{\alpha^2s}{8},$$ which happens with probability at most $2^{-s}$, as required.
\end{proof}


In the next lemma we fixed a a moderate size family of large subsets of the hypercube and 
show that, upon (vertex- and edge-)sprinkling, with suitably high probability, for every set in the family one can typically find a large matching leaving it.


\begin{lemma}\label{lem: expanding matching}
Fix $\eps\in (0,1)$, an integer $s\in \{d^2,\ldots,2^d\}$, sets $U\subseteq V(Q^d)$, $F\subseteq E(Q^d)$ and denote by $\cC_l$ the set of components of size at least $d^2$ in $(U,F)$. With every family of components $\cC'\subseteq \cC_l$ associate a set $A(\cC')\subseteq V(Q^d)$ such that $|A(\cC')|\in [s/2,(1-\eps)2^d]$. 
Then, with probability at least $1-2^{-\eps^4s/(980\sqrt{d})}$, for every $\cC'\subseteq \cC_l$ with $|V(\cC')|\le s$, the graph $(V_{\eps},E_{\eps})$ contains a matching size at least $\eps^4s/(160\sqrt{d})$ leaving $A(\cC')$.
\end{lemma}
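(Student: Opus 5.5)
Fix a family $\cC'\subseteq\cC_l$ with $|V(\cC')|\le s$ and write $A=A(\cC')$. The plan is to show that the probability of failing for this one $\cC'$ is tiny, and then take a union bound over all admissible $\cC'$. The union bound is cheap here. Every component in $\cC_l$ has at least $d^2$ vertices, so a family with $|V(\cC')|\le s$ contains at most $s/d^2$ components. The components are pairwise disjoint, so there are at most $2^d/d^2$ of them in total. Hence the number of admissible $\cC'$ is at most
\[
\sum_{j\le s/d^2}\binom{2^d/d^2}{j}\le 2^{d s/d^2+1}=2^{s/d+1}.
\]
This is $2^{o(s/\sqrt d)}$, which is negligible compared with the target exponent $\eps^4 s/\sqrt d$. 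It therefore suffices to prove that, for a fixed set $A$ with $|A|\in[s/2,(1-\eps)2^d]$, the probability that $(V_\eps,E_\eps)$ has no matching of size $\eps^4 s/(160\sqrt d)$ leaving $A$ is at most $2^{-\eps^4 s/(900\sqrt d)}$, say.

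For a fixed $A$, I would first build a large deterministic matching in $Q^d$ leaving $A$. By Lemma~\ref{l: harper vertex}, $|N(A)|\ge \eps|A|/(10\sqrt d)\ge \eps s/(20\sqrt d)$. Every vertex of $N(A)$ has a neighbour in $A$, and every vertex of $A$ has at most $d$ neighbours in $N(A)$. A greedy choice is too weak: it only gives a matching of size $|N(A)|/d$, which is not enough. Instead I would use the hypercube structure. Split the edges between $A$ and $N(A)$ by direction $i\in[d]$. Edges of a single direction are pairwise disjoint, so within one direction they form a matching automatically. Since each vertex of $N(A)$ has at least one edge into $A$, some direction $i$ carries at least $|N(A)|/d$ such edges, which is again too weak. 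The better route is a bipartite matching argument in the spirit of Hall's theorem. Apply Lemma~\ref{l: harper vertex} to subsets of $A$: for $T\subseteq A$ one has $|N(T)|\ge\eps|T|/(10\sqrt d)$, but this lower bound counts all of $N(T)$, including vertices inside $A$, so it does not directly control $N(T)\setminus A$. I would therefore switch sides and match from $N(A)$ back into $A$, or alternatively use the vertex boundary of $A$ taken inside $A$, namely the inner boundary $\partial A$. By the same Harper-type bound applied to the complement, $|\partial A|\ge \eps^2 s/(c\sqrt d)$, and the loss of factors of $\eps$ incurred here is why the final constant carries $\eps^4$.

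Concretely, the target of this step is a matching $M$ in $Q^d$ of size at least $\eps^2 s/(40\sqrt d)$ between $A$ and $A^c$. I expect this to be the main obstacle. The approach I would try first is a K\H{o}nig-type argument: if the maximum matching in the bipartite graph between $A$ and $A^c$ were smaller than this, then there would be a vertex cover $C$ of that size. Removing $C$ from $A$, or adding $C$ to $A$, produces a set whose vertex boundary is contained in $C$ or is otherwise small. This contradicts Lemma~\ref{l: harper vertex}, provided the modified set still has size in the range $[s/4,(1-\eps/2)2^d]$, which holds because $|C|$ is much smaller than $s$.

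Given $M$, an edge $xy\in M$ survives in $(V_\eps,E_\eps)$ with probability $\eps^3$, since both endpoints and the edge itself must be retained. Because $M$ is a matching, these events are independent across its edges. The number of surviving edges of $M$ therefore dominates $\mathrm{Bin}(|M|,\eps^3)$, with mean at least $\eps^5 s/(40\sqrt d)$. Applying Lemma~\ref{l: chernoff} with $t$ equal to half the mean gives a failure probability of at most $\exp(-\Omega(\eps^5 s/\sqrt d))$. Taking the union bound over all $\cC'$ then finishes the proof. To match the constants in the statement exactly ($\eps^4/160$ and $\eps^4/980$), the bound on $|M|$ needs to carry one fewer factor of $\eps$ than I have tracked here. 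I would adjust the constants in the matching step accordingly, but the structure of the argument is unchanged.
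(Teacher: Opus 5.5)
Your architecture matches the paper's proof: the same union bound over at most $2^{s/d+1}$ families $\cC'$, a deterministic matching in $Q^d$ leaving $A$ obtained from Lemma~\ref{l: harper vertex}, and then survival of each matching edge with probability $\eps^3$ followed by Lemma~\ref{l: chernoff}.

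\textbf{The gap.} What is missing is the correct power of $\eps$ in the matching step, and this is not a matter of constants. With your target $|M|\ge \eps^2 s/(40\sqrt d)$, the surviving matching has size $\Theta(\eps^5 s/\sqrt d)$ and the failure probability is $\exp(-\Omega(\eps^5 s/\sqrt d))$. That is strictly weaker than the statement, and no adjustment of numerical constants recovers a lost factor of $\eps$. The $\eps^4$ in the lemma is just $\eps\cdot\eps^3$: one $\eps$ comes from Lemma~\ref{l: harper vertex}, and $\eps^3$ comes from retaining both endpoints and the edge. There is no second loss.

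Your detour through the inner boundary is also not available as stated. Applying Lemma~\ref{l: harper vertex} to $V(Q^d)\setminus A$ would require $|A|\ge \eps 2^d$, whereas $|A|$ may be as small as $s/2$.

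\textbf{The fix is already in your K\H{o}nig argument}, provided you use the ``removing'' option. Adding $C$ to $A$ only gives $|N(A\cup C)|\le d|C|$, which loses a factor $d$.

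Let $C$ be a minimum vertex cover of the bipartite graph of $Q^d$-edges between $A$ and its complement, and put $T=A\setminus C$. A neighbour of $T$ outside $T$ either lies in $A\cap C$, or lies outside $A$. In the latter case it is joined to $T$ by a crossing edge whose $A$-endpoint is uncovered, so it lies in $C$. Hence $N(T)\subseteq C$.

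If $|C|<\eps s/(40\sqrt d)$, then $s/4\le |T|\le (1-\eps)2^d$. Lemma~\ref{l: harper vertex} then gives $|C|\ge \eps|T|/(10\sqrt d)\ge \eps s/(40\sqrt d)$, a contradiction. So there is a matching with $\eps s/(40\sqrt d)$ edges.

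The surviving edges then dominate $\mathrm{Bin}(\eps s/(40\sqrt d),\eps^3)$, with mean $\eps^4 s/(40\sqrt d)$. Lemma~\ref{l: chernoff} gives failure probability at most $2e^{-\eps^4 s/(480\sqrt d)}$. For large $d$ this absorbs the $2^{s/d+1}$ union bound and yields exactly the stated bounds.

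The paper does the same thing without K\H{o}nig. It takes a maximal matching $M$ and uses that maximality gives $N(A\setminus V(M))\subseteq V(M)$. In other words, $V(M)$ itself serves as the vertex cover, at the cost of a factor $2$.
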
 

\begin{proof}
Set $A\coloneqq A(\cC')$,
$B\coloneqq V(Q^d)\setminus A$ and let $M$ be a maximum matching between $A$ and $B$ in $Q^d$.~The~maximality of $M$ gives $N(A\setminus V(M))\subseteq V(M)$, and by Lemma \ref{l: harper vertex} we have 
$|V(M)|\ge |N(A\setminus V(M))|\ge \eps|A\setminus V(M)|/(10\sqrt{d})$.
Since also $|A|\ge s/2$, either $|V(M)| \ge s/4$ or $|A\setminus V(M)|\ge s/4$, with the latter implying that $|V(M)|\ge \eps s/(40\sqrt{d})$. Overall, $M$ is a matching on at least $\eps s/(80\sqrt{d})$ edges. 

Next, since the number of edges of $M$ in the graph $(V_\eps,E_\eps)$ stochastically dominates $\mathrm{Bin}(\eps s/(80\sqrt{d}),\eps^3)$, by Lemma \ref{l: chernoff}, $\mathbb P[|M\cap E((V_\eps,E_\eps))|\le \eps^4s/(160\sqrt{d})]\le 2e^{-\eps^4s/(960\sqrt{d})}$.
Finally, to choose $\cC'$, we need to choose at most $s/d^2$ components 
among the at most $2^d/d^2$ components in $\cC_l$. Thus, the number of choices for $\cC'$ is at most
\[\sum_{i=1}^{s/d^2}\binom{2^d/d^2}{i}\le \left(3\cdot 2^d/s\right)^{s/d^2}\le 2^{s/d} = 2^{o(s/\sqrt{d})}.\]
A union bound over the number of ways to choose $\cC'$ completes the proof.
\end{proof}

Using the two lemmas above, we show that, with suitably high probability, all vertex sets which span subgraphs of $Q^d_{p,q}$ where almost all vertices have degree $\Theta(d)$ send at least one edge to the rest of the graph.
We say that a vertex set $S$ is \emph{disconnected} in a graph $G$ if $e(S,V(G)\setminus S)=0$.


\begin{lemma}\label{lem: sprinkling event}
Fix $p,q,\alpha\in (0,1)$ and sufficiently small $\eps\coloneqq \eps(p,q,\alpha)>0$ and $\beta\coloneqq \beta(\alpha, \eps)>0$. 
For every $s\in\{d^2,\ldots, 3|V(Q^d_{p,q})|/4\}$, the probability that there exists a disconnected set $S$ on $s$ vertices in $Q^d_{p,q}$ with at most $\beta s/\sqrt{d}$ vertices with degree at most $\alpha d/4$ is bounded from above by $2^{-\eps^5 s/(2\sqrt{d})}$.
\end{lemma}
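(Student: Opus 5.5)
We plan to prove the statement by sprinkling. Choose $p',q'$ with $(1-p')(1-\eps)=1-p$ and $(1-q')(1-\eps)=1-q$, and couple so that $V_p=V_{p'}\cup V_\eps$ and $E_q=E_{q'}\cup E_\eps$, with the two rounds independent. We first expose $(V_{p'},E_{q'})$ and define everything deterministic in terms of this first round. Then we show that, whatever the first round is, the sprinkled round $(V_\eps,E_\eps)$ destroys every candidate bad set, except with probability $2^{-\Omega(\eps^4 s/\sqrt d)}$.

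\textbf{Step 1 (transfer to the first round).} Let $S$ be a bad set in $Q^d_{p,q}$: it has $|S|=s$, it is disconnected, and at most $\beta s/\sqrt d$ of its vertices have degree at most $\alpha d/4$. Define $D$ as the set of vertices of $Q^d$ that have at least $\alpha d/8$ neighbours in $V_\eps$ or are incident to at least $\alpha d/8$ edges of $E_\eps$. By Lemma~\ref{lem: decent vertices}, applied with $\alpha/8$ in place of $\alpha$ and with the threshold $\beta s/\sqrt d$ in place of $s$ (this needs $\beta s/\sqrt d\ge d^2$; small $s$ can be absorbed separately or by adjusting thresholds), we get $|D|\le\beta s/\sqrt d$ with probability at least $1-2^{-\beta s/\sqrt d}$. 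Let $S_0\subseteq S$ be the vertices of degree greater than $\alpha d/4$ in $Q^d_{p,q}$ that are not in $D$; then $|S_0|\ge s-2\beta s/\sqrt d$. Each $v\in S_0$ has at least $\alpha d/4-\alpha d/4=0$ guaranteed edges only if the constants are set carelessly. So we take the constants so that each $v\in S_0$ keeps at least $\alpha d/8$ edges in the first-round graph $G'=(V_{p'}\setminus V_\eps,E_{q'})$, all of them inside $S$.

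Next look at the components of $G'[S\cap V_{p'}]$. Since $Q^d$ has few edges inside small sets, by Harper (Lemma~\ref{l: Harper edge}) a component of size less than $d^2$ spans at most $2\log_2 d$ edges per vertex on average. So vertices of $S_0$, which have linear degree, essentially all lie in components of size at least $d^2$, that is, in $\cC_l$ for $(U,F)=(V_{p'},E_{q'})$. This is a deterministic function of the first round. Let $\cC'$ be the family of large components meeting $S$; then $|V(\cC')|\le s$. Set $A(\cC')$ to be $V(\cC')$ together with its $Q^d$-neighbours in $V_{p'}$ reached via $E_{q'}$, or more simply $A=V(\cC')$. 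Then $|A|\ge s/2$. The bound $|A|\le(1-\eps)2^d$ follows from $s\le 3|V(Q^d_{p,q})|/4$ together with concentration of $|V_p|$.

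\textbf{Step 2 (sprinkling kills it).} By Lemma~\ref{lem: expanding matching}, with probability at least $1-2^{-\eps^4 s/(980\sqrt d)}$, simultaneously for all $\cC'$ the sprinkled graph $(V_\eps,E_\eps)$ contains a matching of size at least $\eps^4 s/(160\sqrt d)$ leaving $A(\cC')$. Each matching edge $xy$ with $x\in A$ has both endpoints in $V_\eps\subseteq V_p$ and lies in $E_\eps\subseteq E_q$, so it is an edge of $Q^d_{p,q}$. Its endpoint $x$ lies in a component of $\cC'$, which is connected to $S$ in $Q^d_{p,q}$. Because $S$ is disconnected, that whole component lies inside $S$, so $x\in S$. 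Then $y\in S$ as well, and $y$ is not in $A$.

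Now we count. The vertices $y$ obtained this way are distinct, so there are at least $\eps^4 s/(160\sqrt d)$ of them, and all lie in $S\setminus V(\cC')$. Such $y$ lie in small components of the first-round graph, or are new sprinkled vertices. We need to show there are at most about $3\beta s/\sqrt d$ such vertices in $S$. The vertices of $S$ outside $\cC'$ are either of low degree (at most $\beta s/\sqrt d$), or in $D$ (at most $\beta s/\sqrt d$), or of high degree with few sprinkled edges; the last kind have linear first-round degree and hence lie in $\cC_l$. Choosing $\beta\ll\eps^4$ gives a contradiction. A union bound over the two failure events gives total probability at most $2^{-\eps^5 s/(2\sqrt d)}$, since $\eps^4/980>\eps^5/2$ and $\beta\ge \eps^5$ after choosing $\beta$ in $[\eps^5,\eps^4/1000]$.

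\textbf{Main obstacle.} The delicate point is Step 1's bookkeeping: ensuring $A(\cC')$ is determined by the first round alone, with no dependence on $S$, which itself depends on $V_\eps$. We must also ensure that high-degree vertices of $S$ in $Q^d_{p,q}$ really sit in large first-round components, including vertices that only appear in $V_\eps$. We expect these to be controlled by $D$ via Lemma~\ref{lem: decent vertices}, and we would verify this first.
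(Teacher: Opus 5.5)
Your overall architecture (split $V_p=V_{p'}\cup V_\eps$ and $E_q=E_{q'}\cup E_\eps$, isolate a family $\cC'$ of large first-round components inside $S$, then use Lemma~\ref{lem: expanding matching} to find a sprinkled matching leaving a set determined by the first round) matches the paper's. However, there is a genuine gap exactly where you flag the ``main obstacle'', and $D$ cannot close it.

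\textbf{The gap.} The trouble is the vertices of $S\cap(V_\eps\setminus V_{p'})$. They belong to $Q^d_{p,q}$ but to no first-round component, and $D$ does not detect membership in $V_\eps$: it only records vertices with many $V_\eps$-neighbours or many $E_\eps$-edges. Nothing makes these vertices number only $O(\beta s/\sqrt d)$. A disconnected $S$ containing $\cC'$ must contain every sprinkled vertex that is $E_{q'}$-adjacent to $V(\cC')$, and there can be of order $\eps s$ of these.

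Take your choice $A=V(\cC')$. Your other option, ``$Q^d$-neighbours in $V_{p'}$ reached via $E_{q'}$'', gives the same set, since first-round components are closed under such adjacencies. Every edge $xy$ of the sprinkled matching may then have $y\in S\cap(V_\eps\setminus V_{p'})$, so $y\in S\setminus V(\cC')$. No contradiction arises, because $\eps^4 s/(160\sqrt d)\ll \eps s$. Your Step~2 claim that high-degree vertices outside $D$ ``have linear first-round degree and hence lie in $\cC_l$'' is false for these vertices. They do have many $E_{q'}$-neighbours in $V_{p'}$, but they are not themselves first-round vertices.

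\textbf{The missing idea.} Enlarge $A$: let $A(\cC')$ be $V(\cC')$ together with all $v\in V(Q^d)$, not only $v\in V_{p'}$, having an $E_{q'}$-neighbour in $V(\cC')$.
\begin{itemize}
\item This set is still a function of the first round and $\cC'$, so the union bound in Lemma~\ref{lem: expanding matching} applies.
\item Any $x\in A\cap V_\eps$ is forced into $S$, since it is joined to $V(\cC')\subseteq S$ by an edge of $Q^d_{p,q}$.
\end{itemize}
You then need to bound $|S\setminus A|$. Besides low-first-round-degree vertices (which your $D$-bookkeeping handles) and small components (your Harper argument), this requires a third class. Let $Y$ be the set of vertices outside $V_{p'}$ that have more than $\alpha d/8$ $E_{q'}$-neighbours in $V_{p'}$, none of them in a large component.
\begin{itemize}
\item Each $v\in S\cap Y$ sends at least $\alpha d/8$ edges of $Q^d_{p,q}$ into $S\cap V(\cC_s)$.
\item Double counting then gives $|S\cap Y|\le (8/\alpha)|S\cap V(\cC_s)|$.
\item Combined with the small-component bound, this yields $|S\setminus A|=O(\eps^5 s/(\alpha\sqrt d))$, which is below the matching size.
\end{itemize}
This is precisely how the paper's proof proceeds.
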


Before proving the above, we briefly sketch the proof. 
Suppose that $S$ is a disconnected set in $Q^d_{p,q}$ whose vertices mostly have degree at least $\alpha d/4$. 
Using sprinkling, we first reveal a subgraph $(V_{p'},E_{q'})$ and withhold an independent copy of $(V_\eps,E_\eps)$ such that together they reproduce $Q^d_{p,q}$. 
We then decompose $S$ according to its intersection with low-degree vertices in $(V_{p'},E_{q'})$ (a set we call $X$), 
the small components (of size less than $d^2$; this is the family $\cC_s$), vertices outside $V_{p'}$ without edges in $E_{q'}$ towards large components (a set we call $Y$), 
and the large components in $(V_{p'},E_{q'})$ (family $\cC_l$). 
By Lemma~\ref{lem: decent vertices}, only very few vertices of $S$ can lie in $X$, and the contribution of $V(\cC_s)\cup Y$ is also limited, see Figure~\ref{fig:5.4}. 
After removing these parts, we obtain a large subset $S'\subseteq S$ contained in the $E_{q'}$-neighbourhood of $V(\cC_l)$. 
By slightly restricting this set to the closed neighbourhood $A=A(\cC')$ of the family $\cC'$ of large components entirely in $S$,
Lemma~\ref{lem: expanding matching} then guarantees a large matching in $Q^d$ leaving this neighbourhood $A$ when the sprinkling is added back. 
As many of these edges are likely to appear by Lemma~\ref{lem: expanding matching}, this produces many edges from $S'$ to its complement, and some of them land outside $S\setminus S'$, thus contradicting that $S$ is disconnected.

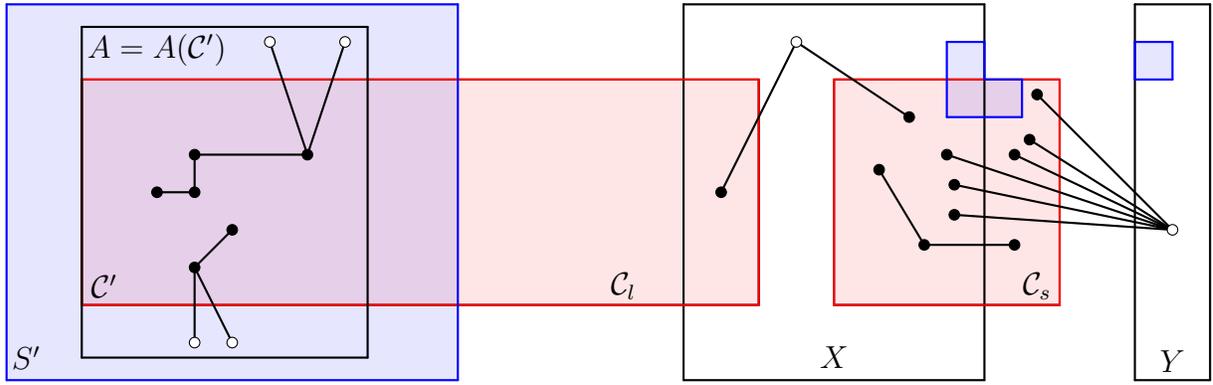
\begin{figure}
\centering
\begin{tikzpicture}[line cap=round,line join=round,x=1cm,y=1cm]
\clip(-8.8,0.7) rectangle (8.1,6.1);
\fill[line width=0.8pt,color=red,fill=red,fill opacity=0.10000000149011612] (-7,5) -- (-7,2) -- (2,2) -- (2,5) -- cycle;
\fill[line width=0.8pt,color=red,fill=red,fill opacity=0.10000000149011612] (3,5) -- (3,2) -- (6,2) -- (6,5) -- cycle;
\fill[line width=0.8pt,color=blue,fill=blue,fill opacity=0.1] (4.5,5.5) -- (4.5,4.5) -- (5.5,4.5) -- (5.5,5) -- (5,5) -- (5,5.5) -- cycle;
\fill[line width=0.8pt,color=blue,fill=blue,fill opacity=0.1] (7,5.5) -- (7.5,5.5) -- (7.5,5) -- (7,5) -- cycle;
\fill[line width=0.8pt,color=blue,fill=blue,fill opacity=0.1] (-2,6) -- (-8,6) -- (-8,1) -- (-2,1) -- cycle;
\draw [line width=0.8pt] (-7,5)-- (-7,2);
\draw [line width=0.8pt] (-7,2)-- (2,2);
\draw [line width=0.8pt] (2,2)-- (2,5);
\draw [line width=0.8pt] (2,5)-- (-7,5);
\draw [line width=0.8pt] (3,5)-- (3,2);
\draw [line width=0.8pt] (3,2)-- (6,2);
\draw [line width=0.8pt] (6,2)-- (6,5);
\draw [line width=0.8pt] (6,5)-- (3,5);
\draw [line width=0.8pt] (1,6)-- (1,1);
\draw [line width=0.8pt] (1,1)-- (5,1);
\draw [line width=0.8pt] (5,1)-- (5,6);
\draw [line width=0.8pt] (5,6)-- (1,6);
\draw [line width=0.8pt] (7,6)-- (7,1);
\draw [line width=0.8pt] (7,1)-- (8,1);
\draw [line width=0.8pt] (8,1)-- (8,6);
\draw [line width=0.8pt] (8,6)-- (7,6);
\draw [line width=0.8pt,color=red] (-7,5)-- (-7,2);
\draw [line width=0.8pt,color=red] (-7,2)-- (2,2);
\draw [line width=0.8pt,color=red] (2,2)-- (2,5);
\draw [line width=0.8pt,color=red] (2,5)-- (-7,5);
\draw [line width=0.8pt,color=red] (3,5)-- (3,2);
\draw [line width=0.8pt,color=red] (3,2)-- (6,2);
\draw [line width=0.8pt,color=red] (6,2)-- (6,5);
\draw [line width=0.8pt,color=red] (6,5)-- (3,5);
\draw [line width=0.8pt,color=blue] (4.5,5.5)-- (4.5,4.5);
\draw [line width=0.8pt,color=blue] (4.5,4.5)-- (5.5,4.5);
\draw [line width=0.8pt,color=blue] (5.5,4.5)-- (5.5,5);
\draw [line width=0.8pt,color=blue] (5.5,5)-- (5,5);
\draw [line width=0.8pt,color=blue] (5,5)-- (5,5.5);
\draw [line width=0.8pt,color=blue] (5,5.5)-- (4.5,5.5);
\draw [line width=0.8pt,color=blue] (7,5.5)-- (7.5,5.5);
\draw [line width=0.8pt,color=blue] (7.5,5.5)-- (7.5,5);
\draw [line width=0.8pt,color=blue] (7.5,5)-- (7,5);
\draw [line width=0.8pt,color=blue] (7,5)-- (7,5.5);
\draw [line width=0.8pt,color=blue] (-2,6)-- (-8,6);
\draw [line width=0.8pt,color=blue] (-8,6)-- (-8,1);
\draw [line width=0.8pt,color=blue] (-8,1)-- (-2,1);
\draw [line width=0.8pt,color=blue] (-2,1)-- (-2,6);
\draw [line width=0.8pt] (7.5,3)-- (5.7,4.8);
\draw [line width=0.8pt] (7.5,3)-- (4.6,3.6);
\draw [line width=0.8pt] (7.5,3)-- (5.4,4);
\draw [line width=0.8pt] (7.5,3)-- (4.6,3.2);
\draw [line width=0.8pt] (7.5,3)-- (5.6,4.2);
\draw [line width=0.8pt] (7.5,3)-- (4.5,4);
\draw [line width=0.8pt] (4.2,2.8)-- (3.6,3.8);
\draw [line width=0.8pt] (4.2,2.8)-- (5.4,2.8);
\draw [line width=0.8pt] (2.5,5.5)-- (1.5,3.5);
\draw [line width=0.8pt] (2.5,5.5)-- (4,4.5);
\draw [line width=0.8pt] (-4,4)-- (-4.5,5.5);
\draw [line width=0.8pt] (-4,4)-- (-3.5,5.5);
\draw [line width=0.8pt] (-4,4)-- (-5.5,4);
\draw [line width=0.8pt] (-5.5,4)-- (-5.5,3.5);
\draw [line width=0.8pt] (-5.5,3.5)-- (-6,3.5);
\draw [line width=0.8pt] (-5.5,2.5)-- (-5.5,1.5);
\draw [line width=0.8pt] (-5.5,2.5)-- (-5,1.5);
\draw [line width=0.8pt] (-5.5,2.5)-- (-5,3);
\begin{scriptsize}
\draw[color=black] (0.2,2.25) node {\large{$\cC_l$}};
\draw[color=black] (-6.7,2.25) node {\large{$\cC'$}};
\draw[color=black] (-6,5.4) node {\large{$A=A(\cC')$}};
\draw[color=black] (5.7,2.25) node {\large{$\cC_s$}};
\draw[color=black] (3,1.3) node {\large{$X$}};
\draw[color=black] (7.5,1.25) node {\large{$Y$}};
\draw[color=black] (-7.737528583463091,1.3) node {\large{$S'$}};
\draw [fill=white] (7.5,3) circle (2pt);
\draw [fill=black] (5.7,4.8) circle (2pt);
\draw [fill=black] (4.6,3.6) circle (2pt);
\draw [fill=black] (5.4,4) circle (2pt);
\draw [fill=black] (4.6,3.2) circle (2pt);
\draw [fill=black] (5.6,4.2) circle (2pt);
\draw [fill=black] (4.5,4) circle (2pt);
\draw [fill=black] (4.2,2.8) circle (2pt);
\draw [fill=black] (3.6,3.8) circle (2pt);
\draw [fill=black] (5.4,2.8) circle (2pt);
\draw [fill=white] (2.5,5.5) circle (2pt);
\draw [fill=black] (1.5,3.5) circle (2pt);
\draw [fill=black] (4,4.5) circle (2pt);
\draw [fill=black] (-4,4) circle (2pt);
\draw [fill=white] (-4.5,5.5) circle (2pt);
\draw [fill=white] (-3.5,5.5) circle (2pt);
\draw [fill=black] (-5.5,4) circle (2pt);
\draw [fill=black] (-5.5,3.5) circle (2pt);
\draw [fill=black] (-6,3.5) circle (2pt);
\draw [fill=black] (-5.5,2.5) circle (2pt);
\draw [fill=white] (-5.5,1.5) circle (2pt);
\draw [fill=white] (-5,1.5) circle (2pt);
\draw [fill=black] (-5,3) circle (2pt);




\draw [line width=0.8pt] (-7, 5.7)-- (-3.2, 5.7);
\draw [line width=0.8pt] (-7, 5.7)-- (-7, 1.3) ;
\draw [line width=0.8pt] (-3.2, 5.7)-- (-3.2, 1.3);
\draw [line width=0.8pt] (-7, 1.3)-- (-3.2, 1.3);
\end{scriptsize}
\end{tikzpicture}
\caption{Illustration of the proof of Lemma~\ref{lem: sprinkling event}. Here, the set $V_{q'} = V(\cC_l)\cup V(\cC_s)$ is drawn in red. Vertices landing there are black, and the vertices outside are white. 
The set $S$ is depicted in blue, and edges in the figure belong to $E_{q'}$. The large subset $S'$ of $S$ is at distance 1 from $\cC_l$ in $E_{q'}$.
Note that the set $A$ might `miss' some of the vertices in $S'\cap V(\cC_l)$, since some components in $\cC_l$ may be only partially in $S'$.}
\label{fig:5.4}
\end{figure}

\begin{proof}[Proof of Lemma \ref{lem: sprinkling event}]
Fix $p',q'$ such that $(1-p')(1-\eps)=1-p$ and $(1-q')(1-\eps)=1-q$, and $s\in \{d^2,\ldots, 3|V(Q^d_{p,q})|/4\}$. Consider independent random sets $V_{p'}$, $E_{q'}$, $V_{\eps}$ and $E_{\eps}$ and note that $Q^d_{p,q}$ and $(V_{p'}\cup V_{\eps},E_{q'}\cup E_{\eps})$ share the same distribution.
Denote by $X\subseteq V(Q^d)$ the set of vertices $v\in V(Q^d)$ with $|N_{E_{q'}}(v)\cap V_{p'}|\le \alpha d/8$. 

Let $\cC_s$ (respectively $\cC_l$) be the set of components of $(V_{p'},E_{q'})$ of size less than (respectively at least) $d^2$. 
Observe that every $C\in \cC_s$ has at least half its vertices in $X$: indeed, otherwise we would have a component $C$ of size at most $d^2$ such that $e(V(C),V(Q^d)\setminus V(C))\le d|V(C)|-(|V(C)|/2)\cdot (\alpha d/8)/2$
edges, contradicting Lemma \ref{l: Harper edge}.

Fix a disconnected set $S$ in $Q^d_{p,q}$ of size $s$ and with at most $\beta s/\sqrt{d}$ vertices with degree at most $\alpha d/4$ in $Q^d_{p,q}$.
Then, $S\cap V_{p'}$ is also disconnected in $(V_{p'},E_{q'})$, and therefore every component in $(V_{p'},E_{q'})$ is either disjoint from or contained in $S$. 

Denote by $X'\subseteq X$ the set of vertices in $X$ with degree at least $\alpha d/4$ in $Q^d_{p,q}$. 
By the definition of $S$ and our observation on $\cC_s$ from the second paragraph of the proof, we have
\begin{equation}\label{eq:snd}
|S\cap X|\le |S\cap X'|+\beta s/\sqrt{d}\qquad \text{and}\qquad |S\cap V(\cC_s)|\le 2|S\cap X|.
\end{equation}
Let $Y$ be the set of vertices $v$ in $V(Q^d)\setminus(X\cup V_{p'})$ with $N_{E_{q'}}(v)\cap V(\cC_l)=\varnothing$. 
Because $Y$ is disjoint from $X$, each vertex $v\in Y$ satisfies $|N_{E_{q'}}(v)\cap V(\cC_s)|\geq \alpha d/8$. Since $S$ is disconnected in $Q^d_{p,q}$, we have that every $v\in S\cap Y$ has at least $\alpha d/8$ neighbours in $Q^d_{p,q}$ which are all in $S\cap V(\cC_s)$. By counting the edges between $S\cap Y$ and $S\cap V(\cC_s)$ in two ways, we get $|S\cap Y|\leq 8/\alpha\cdot |S\cap V(\cC_s)|$. 

Note that every vertex in $X'$ had degree at most $\alpha d/8$
before the sprinkling of $V_{\eps}$ and $E_{\eps}$, but has degree
at least $\alpha d/4$ in $Q^d_{p,q}$. Hence each such vertex
gained at least $\alpha d/8$ additional edges from the
sprinkling. Therefore, by Lemma~\ref{lem: decent vertices}, with
probability at least $1-2^{-\eps^5 s/\sqrt{d}}$, we have
$|X'| \le \eps^5 s/\sqrt d$. By~\eqref{eq:snd}, on this event, we have
\begin{equation}\label{eq:chain}
|S\cap X|+|S\cap V(\cC_s)|+|S\cap Y|\le  |S\cap X|+(1+8/\alpha)|S\cap V(\cC_s)| \le |S\cap X| \cdot 19/\alpha
    \le (2\eps^5 s/\sqrt{d})\cdot 19/\alpha,    
\end{equation}
where the last inequality holds for $\beta\le \eps^5$.
Next, set $S'=S\setminus (X\cup V(\cC_s)\cup Y)$ and note that every $v\in S'$ is either in $V(\cC_l)$ or satisfies $N_{E_{q'}}(v)\cap V(\cC_l)\neq\varnothing$. 
Then, by assuming~\eqref{eq:chain},
$|S'|\ge (1-38\eps^5/(\alpha \sqrt{d}))s$ and, since $S$ is disconnected in $Q^d_{p,q}$, $N_{Q^d_{p,q}}(S')\subseteq S\setminus S'$. 

To conclude, we will show that such $S'$ exists with probability at most $2^{-\eps^5 s/\sqrt{d}}$ and therefore the probability there exists a set $S$ violating the statement is at most $2^{-\eps^5 s/\sqrt{d}}+2^{-\eps^5 s/\sqrt{d}}\le 2^{-\eps^5 s/(2\sqrt{d})}$, concluding the proof. 

To that end, set $\cC'\coloneqq \{C\in \cC_{\ell}\colon V(C)\cap S'\neq\varnothing\}$. Let $A=A(\cC')\subseteq V(Q^d)$ be the set of all $v\in V(Q^d)$ for which there exists $u\in V(\cC')$ such that $uv\in E_{q'}$. Note that $A$ contains $V(\cC')$ but also potentially some vertices outside $V_{p'}$.
Observe that $S'\subseteq A$ and hence $|A|\ge |S'|\ge s/2$. 
Further, since $S$ has an empty neighbourhood in $Q^d_{p,q}$, every $C\in \cC'$ is entirely contained in $S$. Further, every $v\in A\cap V(Q^d_{p,q})$ is is either a vertex of such $C$, and thus in $S$, or is an $E_{q'}$-neighbour (and thus a neighbour in $Q^d_{p,q}$) of some vertex in such a $C$. We thus conclude that $A$ is disjoint from $V(Q^d_{p,q})\setminus S$. Now, by Lemma \ref{l: chernoff}, with probability at least $1-\e^{-p2^d/12}$, we have that $|V(Q^d_{p,q})|\ge p2^d/2$ and, since $s\le 3|V(Q^d_{p,q})|/4$, we also have \[|A|\le 2^d-|V(Q^d_{p,q})\setminus S|\le 2^d-(|V(Q^d_{p,q})|-s)\le 2^d-|V(Q^d_{p,q})|/4\le 2^d(1-p/8).\]
By choosing $\eps\le p/8$ and recalling that $|A|\ge s/2$, we apply Lemma \ref{lem: expanding matching} for $A=A(\cC')$, $U=V_{p'}$ and $F=E_{q'}$ to obtain that, with probability at least $1-2^{-\eps^4s/(980\sqrt{d})}$, in $(V_\eps,E_\eps)\subseteq Q^d_{p,q}$, there is a matching $M$ with $e(M)\ge \eps^4s/(160\sqrt{d})$ leaving $A$. 
Moreover, by~\eqref{eq:chain}, with probability at least $1-2^{-\eps^5 s/\sqrt{d}}$, at most $38\eps^5 s/(\alpha\sqrt{d})$ edges of this matching have an endpoint in $S\setminus S'$. For sufficiently small $\eps$, this leads to contradiction since,
\[|N_{Q^d_{p,q}}(S')|-|S\setminus S'|\ge e(M)-2|S\setminus S'|\ge \eps^4s/(160\sqrt{d}) - 38\eps^5s/(\alpha \sqrt{d}) > 0.\qedhere\]
\end{proof}

We are now ready to prove the key proposition of this section.
\begin{proof}[Proof of Proposition \ref{prop: expansion of high degree}]
Consider an integer $s\le d^2$.
Then, by Lemma \ref{l: Harper edge}, for every set $S$ with $|S|=s$ satisfying the assumptions of the proposition, we have that $e(S)\le s\log_2s=2s\log_2d$. 
On the other hand, we have that $e_{Q^d_{p,q}}(S,V(Q^d_{p,q}))\ge (\alpha d/2)\cdot s$, and therefore 
    \begin{align*}
        e_{Q^d_{p,q}}(S,V(Q^d_{p,q})\setminus S)\ge (\alpha d/2)\cdot s-2e(S)\ge (\alpha d/2-4\log_2d)s\ge \alpha ds/3.
    \end{align*}
    Since $Q^d$ is $d$-regular, we obtain that $|N_{Q^d_{p,q}}(S)|\ge \alpha s/3>\gamma s/\sqrt{d}$.

    Next, assume $s>d^2$ and subsample each vertex of $Q^d_{p,q}$ independently and with probability $1/2$, thus forming a copy of $Q^d_{p/2,q}\subseteq Q^d_{p,q}$. We define the events
    \begin{itemize}
        \item $\mathcal{A}_s$ that there is a set $S$ of size $s$ such that $N_{Q^d_{p,q}}(S)<\gamma s/\sqrt{d}$ and, for every $v\in S$, the degree of $v$ in $Q^d_{p,q}$ is at least $\alpha d$, and
        \item $\cB_s$ that there is a set $S$ of size $s$ disconnected in $Q^d_{p/2,q}$, and at least $s-\frac{4\gamma s}{\alpha d^{1/2}}$ vertices in $S$ have degree at least $\alpha d/7$ in $Q^d_{p/2,q}$.
    \end{itemize}
    Now, condition on $\mathcal{A}_s$, reveal $Q^d_{p,q}$ and consider some fixed $S$ witnessing $\mathcal{A}_s$. 
    Then, 
    \begin{equation}\label{eq:cn1}
    \text{with probability at least $2^{-\gamma s/\sqrt{d}}$, $S$ is disconnected in $Q^d_{p/2,q}$,}
    \end{equation}
    as it suffices that all vertices in $N_{Q^d_{p,q}}(S)$ remain outside $Q^d_{p/2,q}$. 
    Let $S'\subseteq S$ be the set of all vertices $v\in S$ with $e_{Q^d_{p,q}}(v,V(Q^d_{p,q})\setminus S)\ge \alpha d/3$. 
    Since $S$ witnesses $\mathcal{A}_s$, it satisfies $N_{Q^d_{p,q}}(S)<\frac{\gamma s}{\sqrt{d}}$, implying that $|S'|\le \frac{(\gamma s/\sqrt{d})\cdot d}{\alpha d/3} = \frac{3\gamma s}{\alpha \sqrt{d}}$. 
    Denote by $X$ the random variable counting the vertices $v\in S\setminus S'$ satisfying $d_{Q^d_{p/2,q}}(v,S)<\alpha d/7$.
    For every $v\in S\setminus S'$, by Lemma~\ref{l: chernoff}, the probability that $d_{Q^d_{p/2,q}}(v,S)<\alpha d/7$ is at most $\mathbb P[\mathrm{Bin}(2\alpha d/3,1/2)<\alpha d/7]\le 2^{-\alpha d/900}$, so $\mathbb E[X]\le s 2^{-\alpha d/900}$ and, by Markov's inequality,
    \begin{equation}\label{eq:azumaeq}
    \begin{split}
    \mathbb P\bigg[X\ge \frac{\gamma s}{\alpha \sqrt{d}}\bigg] = o(1).
    \end{split}
    \end{equation}
    Thus, the probability that there are at most $\frac{4\gamma s}{\alpha \sqrt{d}}$ vertices $v\in S$ with $d_{Q^d_{p/2,q}}(v,S)<\alpha d/7$ is at least (say) $1/2$ and this event only depends on the states of the vertices of $S$ in $Q^d_{p/2,q}$.
    Putting~\eqref{eq:cn1} and~\eqref{eq:azumaeq} together, we have that 
    \begin{align}
        \mathbb P[\mathcal{B}_s\mid \mathcal{A}_s]\ge 2^{-\frac{\gamma s}{\sqrt{d}}-1}.\label{eq: event lower bound}
    \end{align}
    On the other hand, by applying Lemma \ref{lem: sprinkling event} (with $\alpha$ replaced by $\alpha/7$ and $\beta$ replaced by $4\gamma/\alpha$), there exists some $\eta'\coloneqq \eta'(p,q,\alpha)>0$ such that
    \begin{align}
        \mathbb P[\mathcal{B}_s]\le 2^{-\eta' s/\sqrt{d}}. \label{eq: event upper bound}
    \end{align}
    Putting inequalities \eqref{eq: event lower bound} and \eqref{eq: event upper bound} together, we obtain that $\mathbb P[\cA_s]\le \mathbb P[\cB_s]2^{\gamma s/\sqrt{d}+1}\le 2^{-\eta s/\sqrt{d}}$ for any $\gamma$ sufficiently small in terms as $\eta'$ and $\eta\in (0,\eta'-\gamma)$. As $s\ge d^2$, the last bound is at most $2^{-\eta d^{3/2}}$, as desired.
\end{proof}

\section{Coupling mixed percolation and long edges in the polytope}\label{section: family of cubes}
This section contains the main new structural ingredient of our work.
For an integer $k\in [d]$, recall that $Q^d(k)$ is the graph with vertex set $V(Q^d)$ and edges between every pair of vertices at Hamming distance $k$.
Although long edges in $P_p^d$ are highly dependent in general, we show that this dependence disappears on a carefully chosen family of cubes inside $Q^d(k)$. 
More precisely, for integers $k,m\ge 1$ with $km\le d$, we construct a family of subgraphs $\cQ_{k,m}$ of $Q^d(k)$, each isomorphic to $Q^m$. 
Then, for each $H\in\cQ_{k,m}$, Lemma~\ref{lem:reduction} identifies the induced subgraph obtained by $p$-vertex-percolating $H$ and restricting to the edges of $P_k$, with a mixed-percolated hypercube $Q^m_{p,q}$ for a constant $q=q(p,k)>0$. 
This exact coupling is what allows the local expansion theory from Proposition~\ref{prop: expansion of high degree} to be imported into the polytope setting. The rest of the section introduces the family $\cQ_{k,m}$ and the combinatorial facts needed to make the promised coupling precise.

Fix $k,m\ge 1$. While $Q^d(k)$ no longer has the same structure as the hypercube, it turns out that it contains a rich structure of subgraphs isomorphic to $Q^m$. 
We first create the necessary setup to exploit this structure. 
In the sequel, we associate the vertices of $Q^d$ with $\mathbb F_2^d$ in the usual way.
Note that $uv$ is an edge in $Q^d(k)$ if and only if $u$ and $v$ differ in exactly $k$ coordinates. Further, for every subset $e\subseteq [d]$, we denote by $\mathbf{1}_e\in\mathbb F_2^d$ the indicator vector of the set $e$.

First, we define a family of $m$ possible ``directions'' of length $k$. To that end, let $\cD_{k,m}$ be the set of all (unordered) sets $\{e_1,\ldots,e_m\}$ of size $m$ such that $e_1,\ldots,e_m$ are disjoint subsets of $[d]$ of size $k$. 
For every $D=\{e_1,\ldots, e_m\}\in \cD_{k,m}$ and $v\in V(Q^d)$, we define the hypercube $Q(D,v)$, which contains all vertices of the form $v+\sum_{i\in I}\mathbf{1}_{e_i}$ for every $I\subseteq [m]$, and for every $u,w\in V(Q(D,v))$, we have that $uw$ is an edge if and only if $u+w=e_i$ for some $i\in [m]$. We say that $e_1,\ldots,e_m$ are the directions of $Q(D,v)$. Observe that $Q(D,v)$ is a subgraph of $Q^d(k)$ isomorphic to $Q^m$. Additionally, $Q(D,v)$ does not depend on the choice of $v$ in the sense of the following lemma.
 \begin{lemma}\label{lem: choice of v}
     Let $D\in \cD_{k,m}$ and $v\in V(Q^d)$. Then, for every $u\in V(Q(D,v))$, it holds that $Q(D,v)=Q(D,u)$.
 \end{lemma}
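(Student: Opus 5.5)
Since $Q(D,v)$ is determined by its vertex set together with the fixed set of directions $D$, we only need to show that the vertex sets of $Q(D,v)$ and $Q(D,u)$ coincide. The edge sets then agree automatically. In both graphs, two vertices $x,y$ are adjacent exactly when $x+y=\mathbf{1}_{e_i}$ for some $i\in[m]$, and this rule depends only on $D$ and not on the base point.

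The plan is to work in $\mathbb F_2^d$. Let $L_D=\{\sum_{i\in I}\mathbf{1}_{e_i}: I\subseteq[m]\}$. We first check that $L_D$ is a linear subspace of $\mathbb F_2^d$, that is, closed under addition. For $I,J\subseteq[m]$, we have
\[\sum_{i\in I}\mathbf{1}_{e_i}+\sum_{j\in J}\mathbf{1}_{e_j}=\sum_{i\in I\triangle J}\mathbf{1}_{e_i},\]
because each $\mathbf{1}_{e_i}$ with $i\in I\cap J$ appears twice and cancels in characteristic $2$. With this, $V(Q(D,v))=v+L_D$ is a coset of $L_D$.

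Now take $u\in V(Q(D,v))$, so $u=v+\ell$ for some $\ell\in L_D$. Then
\[V(Q(D,u))=u+L_D=v+(\ell+L_D)=v+L_D=V(Q(D,v)),\]
where $\ell+L_D=L_D$ because $L_D$ is a subgroup containing $\ell$. Concretely, the vertex $u+\sum_{j\in J}\mathbf{1}_{e_j}$ of $Q(D,u)$ equals $v+\sum_{i\in I\triangle J}\mathbf{1}_{e_i}$ when $u=v+\sum_{i\in I}\mathbf{1}_{e_i}$. Since the vertex sets coincide and the adjacency rule is the same, $Q(D,v)=Q(D,u)$.

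There is no real obstacle here. The only point needing care is the symmetric-difference identity, which relies on working in characteristic $2$. The disjointness of the $e_i$ is not needed for this lemma; it matters only for the isomorphism with $Q^m$ and for the Hamming-distance-$k$ property.
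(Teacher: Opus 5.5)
Your proof is correct and is essentially the paper's argument, since both rest on the identity $\sum_{i\in I}\mathbf{1}_{e_i}+\sum_{i\in I'}\mathbf{1}_{e_i}=\sum_{i\in I\triangle I'}\mathbf{1}_{e_i}$ in $\mathbb F_2^d$. The only difference is that the paper proves $V(Q(D,u))\subseteq V(Q(D,v))$ and then uses $|V(Q(D,v))|=|V(Q(D,u))|=2^m$, which relies on the $e_i$ being disjoint, whereas your coset formulation $u+L_D=v+L_D$ gives both inclusions at once, so your remark that disjointness is not needed is accurate.
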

 \begin{proof}
     Since $|V(Q(D,v))|=|V(Q(D,u))|=2^m$, it suffices to show $V(Q(D,u))\subseteq V(Q(D,v))$. Let $D=\{e_1,\ldots,e_m\}$. Since $u\in V(Q(D,v))$, there exists $I\subseteq [m]$ such that $u=v+\sum_{i\in I} \mathbf{1}_{e_i}$. Also, for every $u'\in V(Q(D,u))$, there exists $I'\subseteq [m]$ such that $u'= u+\sum_{i\in I'} \mathbf{1}_{e_i}$. But then, $u' = v+\sum_{i\in I} \mathbf{1}_{e_i} + \sum_{i\in I'} \mathbf{1}_{e_i} = v+\sum_{i\in J} \mathbf{1}_{e_i}$, where $J$ is the symmetric difference of $I$ and $I'$ (and thus $J\subseteq [m]$). Consequently, $u'\in V(Q(D,v))$.
 \end{proof}
 \begin{remark}\label{rem: partition}
 Lemma~\ref{lem: choice of v} has the following implication. Fix some $D\in\cD_{k,m}$. Then, for $u,v\in V(Q^d)$, we have that $Q(D,u)$ and $Q(D,v)$ are either the same or vertex-disjoint. As each $Q(D,v)$ contains exactly $2^m$ vertices, it follows that there are $2^{d-m}$ different hypercubes whose directions are given by $D$ and that these hypercubes partition $V(Q^d)$. 
 \end{remark}
 
Let $\cQ_{k,m}$ be the family of hypercubes given by $Q(D,v)$ varying over all possible choices of $D\in \cD_{k,m}$ and $v\in V(Q^d)$. Note that, as described above, $\cQ_{k,m}$ contains $2^{d-m}$ different hypercubes for every choice of $D\in\cD_{k,m}$, so that $|\cQ_{k,m}|=2^{d-m}|\cD_{k,m}|$. This observation allows us to get the following bound on the size of $\cQ_{k,m}$.
\begin{lemma}\label{lem: size of Qkm}
For all integers $k,m\ge 1$ with $km\le d$, $|\cQ_{k,m}|\leq e^{2d\log d}$.
\end{lemma}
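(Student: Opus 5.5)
We bound $|\cQ_{k,m}| = 2^{d-m}|\cD_{k,m}|$ by bounding the two factors separately. The proof reduces to a crude count of $\cD_{k,m}$.

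The first factor is immediate: $2^{d-m}\le 2^d = e^{d\log 2}$.

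For $|\cD_{k,m}|$, an element $\{e_1,\ldots,e_m\}$ is a collection of $m$ disjoint $k$-subsets of $[d]$. We over-count by choosing the sets as an ordered $m$-tuple and ignoring disjointness, so each $e_i$ is one of $\binom{d}{k}\le d^k$ subsets. This gives
\[
|\cD_{k,m}|\le \binom{d}{k}^m\le d^{km}\le d^{d}=e^{d\log d},
\]
where the last step uses the hypothesis $km\le d$. Passing to unordered sets only decreases the count, so no extra care is needed.

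Combining the two bounds,
\[
|\cQ_{k,m}|\le e^{d\log 2}\cdot e^{d\log d}\le e^{2d\log d},
\]
since $\log 2\le \log d$ for $d\ge 2$. For $d=1$ we have $k=m=1$, and the bound can be checked directly; in any case only large $d$ matters in the paper.

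There is no real obstacle here. The one point to watch is that $\binom{d}{k}^m$ is bounded through $d^{km}$, because $km\le d$ is exactly the quantity controlled by the hypothesis.
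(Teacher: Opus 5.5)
Your proof is correct and has the same structure as the paper's: write $|\cQ_{k,m}|=2^{d-m}|\cD_{k,m}|$ and bound $|\cD_{k,m}|$ by a crude overcount. The only difference is the overcount itself. The paper assigns each coordinate of $[d]$ to one of $e_1,\ldots,e_m$ or to none, which gives $(m+1)^d$ and needs only $m\le d$. You instead use $\binom{d}{k}^m\le d^{km}\le d^d$, which is where you invoke the hypothesis $km\le d$.
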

\begin{proof}
Since every element of $[d]$ is in at most one of $e_1,\ldots, e_m$, the number of choices of $e_1,\ldots, e_m$ is at most $(m+1)^d\le e^{d\log(m+1)}$ (where every element is assigned to $e_1,\ldots,e_m$ or to no set at all).
Since $|\cQ_{k,m}|=2^{d-m}|\cD_{k,m}|$, we get $|\cQ_{k,m}|\leq 2^{d-m}\cdot e^{d\log (m+1)}\leq e^{2d\log d}$. 
\end{proof}

The next lemma is the main conclusion of this section. 
It shows that the interaction of the hypercubes in $\cQ_{k,m}$ with $P^d_p$ follows that of mixed percolation on the hypercube, paving the way to utilise Proposition \ref{prop: expansion of high degree} in later arguments. For graphs $G_1\subseteq G_2$ and an edge set $F\subseteq E(G_2)$, recall that $G_1[F]$ is the spanning subgraph of $G_1$ with edge set $E(G_1)\cap F$.

\begin{lemma}\label{lem:reduction}
For every $H\in \cQ_{k,m}$ with $H=(U,F)$, denote $H_p=(U_p,F)$.
Then, there is $q\geq (1-p)^{(2^k-2)}$ such that $H_p[P_k]$ has the same distribution as $Q^m_{p,q}$.
\end{lemma}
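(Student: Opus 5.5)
Write $H=Q(D,v)$ with $D=\{e_1,\ldots,e_m\}$. Then $U=\{v+\sum_{i\in I}\mathbf 1_{e_i}: I\subseteq[m]\}$, and $uw\in F$ exactly when $u+w=\mathbf 1_{e_i}$ for some $i$. The plan is to show three things. First, the event $\{uw\in P_k\}$ is determined by the states of the vertices in the $k$-dimensional subcube $Q^d[u,w]$. Second, the \emph{interiors} $I(uw):=V(Q^d[u,w])\setminus\{u,w\}$, over $uw\in F$, are pairwise disjoint and disjoint from $U$. Third, the conditional probability that $uw\in P_k$, given $u,w\in U_p$, is the same number $q$ for every $uw\in F$, and $q\ge(1-p)^{2^k-2}$. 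Together these give the lemma: the vertices of $U$ are retained independently with probability $p$, and conditionally on $U_p$ each edge of $F$ between retained vertices is present independently with probability $q$. That is exactly the law of $Q^m_{p,q}$ under the isomorphism $H\cong Q^m$.

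For the first step I will use Remark~\ref{remark: edge via cube}. Whether $uw$ is an edge of $P^d_p$ depends only on $Q^d_p\cap Q^d[u,w]$, because $P^d_p\cap\partial H'=\mathrm{conv}(Q^d_p\cap Q^d[u,w])$ for a supporting half-space $H'$ whose boundary cuts out the face $Q^d[u,w]$. Given that $u,w$ are present, the event is therefore a function of the states of the vertices in $I(uw)$ only.

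The combinatorial heart, and the step I expect to need the most care, is the disjointness. For $uw\in F$ with $u+w=\mathbf 1_{e_i}$, the cube $Q^d[u,w]$ consists of the points agreeing with $u$ outside $e_i$. Its interior $I(uw)$ is the set of such points whose restriction to $e_i$ is neither $v|_{e_i}$ nor $v|_{e_i}+\mathbf 1$. By contrast, every point of $U$ has restriction to each $e_j$ in $\{v|_{e_j},\,v|_{e_j}+\mathbf 1\}$, so $I(uw)\cap U=\varnothing$.

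Now take a second edge $u'w'\in F$ in direction $e_j$. If $j=i$, the two cubes are either equal (the same edge) or differ outside $e_i$ and are then disjoint. If $j\neq i$, a common point $x$ would agree with $u'$ outside $e_j$, hence on $e_i$. Then $x|_{e_i}\in\{v|_{e_i},v|_{e_i}+\mathbf 1\}$, contradicting $x\in I(uw)$. Since the vertex states are independent, it follows that, conditionally on the states of $U$, the indicators $\mathbf 1\{uw\in P_k\}$ for $uw\in F$ with $u,w\in U_p$ are independent.

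Finally, for the value of $q$: for any two pairs at Hamming distance $k$, some automorphism of $Q^d$ maps one to the other, and it preserves the law of $Q^d_p$. Hence $q:=\mathbb P[uw\in P^d_p\mid u,w\in Q^d_p]$ does not depend on the edge. For the lower bound, if all $2^k-2$ vertices of $I(uw)$ are absent, then $V(Q^d_p[u,w])=\{u,w\}$, and $uw$ is an edge by Remark~\ref{remark: edge via cube} (i.e.\ Lemma~\ref{lem: poly long edge exists}). This gives $q\ge(1-p)^{2^k-2}$. Edges of $P_k$ joining vertices of $U$ that are not in $F$ play no role, since $H_p[P_k]$ keeps only the edges in $F$. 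This completes the identification with $Q^m_{p,q}$.
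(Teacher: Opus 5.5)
Your proposal is correct and follows essentially the same route as the paper. Both use locality of the edge event via Remark~\ref{remark: edge via cube}, pairwise disjointness of the interiors $V(Q^d[u,w])\setminus\{u,w\}$ (split into same-direction and different-direction cases) together with their disjointness from $V(H)$, and the lower bound $q\ge(1-p)^{2^k-2}$ from the event that the interior is empty. Your appeal to automorphisms of $Q^d$ to show that $q$ is the same for every edge, and your direct check that the interiors avoid $U$, make explicit two points the paper leaves implicit.
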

\begin{proof}
The vertices of $H_p$ are clearly present independently with probability $p$.
We show that, conditionally on its vertex set, the edges of $H_p[P_k]$ are present independently and with certain probability $q$.
To this end, recall that, by Remark~\ref{remark: edge via cube} the presence of an edge $uv\in Q^d(k)$ in $P^d_p$ depends only on the states of the vertices in $Q^d[u,v]$.
Hence, it suffices to show that the sets $\{V(Q^d[u,v])\setminus\{u,v\}: u,v\in V(H)\}$ are all disjoint from each other and $V(H)$: given this fact, each of them satisfies $V(Q^d_p[u,v])=\{u,v\}$ with probability $(1-p)^{(2^k-2)}$, which then justifies the bound on $q$ and suffices to conclude.


Denote by $D=\{e_1,\ldots,e_m\}\in \cD_{k,m}$ be the directions of $H$ so that $H=Q(D,u)$.
Also, fix two edges $uv,xy\in E(H)$ with $u+v=\mathbf{1}_{e'}$ and $x+y=\mathbf{1}_{e''}$. 
First, suppose $e'\neq e''$. By the definition of $H$, there exists $I\subseteq [m]$ such that $x=u+\sum_{i\in I} \mathbf{1}_{e_i}$ and $y=u+\mathbf{1}_{e''}+\sum_{i\in I} \mathbf{1}_{e_i}$. Since the $e_i$ are disjoint subsets of $[d]$, the restriction of one of $x$ or $y$ to the coordinates of $e''$ equals to that of $u$ and $v$; assume without loss of generality that it is $x$. But then, the restriction of any vertex in $Q^d[x,y]$ to $e''$, besides $x$, is different from that of $u$ and of $v$ and hence also different from all the vertices in $Q^d[u,v]$. It follows that $Q^d[u,v]$ and $Q^d[x,y]$ do not overlap besides potentially on $x$. 
If, on the other hand, $e'=e''$, then all of $u,v,x,y$ are distinct. It follows that $u$ and $x$ differ on $[d]\setminus e'$. But all the vertices in $Q^d[u,v]$ agree with $u$ on $[d]\setminus e'$ and all the vertices in $Q^d[x,y]$ agree with $x$ on $[d]\setminus e'$, implying that the cubes $Q^d[u,v]$ and $Q^d[x,y]$ are disjoint.
\end{proof}

We also remark that the edges in $Q^d(k)$ do \emph{not} percolate independently in general; Lemma~\ref{lem:reduction} implies that joint independence holds for the sets of edges of each cube in $\cQ_{k,m}$, and Proposition~\ref{prop: expansion of high degree} will be applied exclusively in this restricted setting. 

\section{\texorpdfstring{Expansion at distance $k$}{Expansion at distance k}}\label{sec: long distances}
This section extends the local expansion available inside the cubes $H\in\cQ_{k,m}$ to global expansion in the distance-$k$ graph. 
We work with a graph $Q_k\subseteq Q^d(k)$ 
which satisfies three important properties: 
first, $Q_k$ contains a significant proportion of the edges in $Q^d(k)$, second, after percolation, all cubes in the collection are ``typical'' in terms of their size and edge-expansion, and third, any intersections of cubes (smaller cubes themselves) also have ``typical'' size. 
Here, the second property ensures proper local expansion, while the third property is used to derive that large sets $S$ do not almost-fully intersect or remain almost-fully disjoint from almost-all cubes in the collection, thus allowing the promised local-to-global boosting.
Under these assumptions, the key lemma of this section (Lemma~\ref{lem: bootstrapped expansion}) states that every set of vertices where each vertex has degree at least $\varepsilon \tbinom{d}{k}$, edge-expands by a factor of $\tbinom{d}{k}/d^3$. 
The proof is an averaging argument over $\cQ_{k,m}$: Lemmas~\ref{lem:diameter} and~\ref{lem:cont} force many cubes into an intermediate regime where local expansion applies (thus contributing non-trivially to the global expansion in $Q_k$), while Lemma~\ref{lem: vertex is good} shows that a high-degree vertex is bad in only a negligible fraction of the cubes containing it. A final multiplicity count turns these local boundary contributions into a global lower bound.

In this section, we work with an odd integer $k\ge 3$ and $m\coloneqq  d/(3k^2)$. 
Fix $p,\varepsilon,\gamma\in(0,1)$, $Q_k\subseteq Q^d(k)$ and, for every $H\in \cQ_{k,m}$, write $H\cap  Q_k$ for the graph with vertex set $V(H)\cap V(Q_k)$ and edge set $E(H)\cap E(Q_k)$.
Throughout this section, we assume that $Q_k$ satisfies the following three conditions.
\begin{enumerate}[label=(\roman*), ref=(\roman*)]
    \item\label{itm: p} It holds that $|V(Q_k)|/2^d=p\pm o(1)$; 
    \item\label{itm: expansion} For every $H\in\cQ_{k,m}$ 
    and $S\subseteq V(H\cap  Q_k)$ with $|S|\leq 3|V(H\cap  Q_k)|/4$ and such that every vertex in $S$ has degree at least $\varepsilon^2d/(8k^2)$ in $H\cap  Q_k$, we have that $|N_{H\cap  Q_k}(S)|\ge \gamma|S|/\sqrt d$;
    \item\label{itm: cubes hit equally} For every $H\in\cQ_{k,m}$, it holds that $|V(H\cap Q_k)|= (1\pm \varepsilon)p2^m$ and, for every $H,H'\in\cQ_{k,m}$ with $|V(H)\cap V(H')|=2^{m-1}$, it holds that $|V(H)\cap V(H')\cap V(Q_k)|= (1/2\pm\varepsilon)p2^{m}$.
\end{enumerate}
Before continuing, we remark that $P_k$ (which, recall, is the subset of edges in $P^d_p$ with endpoints at Hamming distance $k$)
is shown to satisfy each of the above conditions in Section~\ref{sec: proof of Theorems}.

The following lemma is the main result of this section.

\begin{lemma}\label{lem: bootstrapped expansion}
Fix an odd integer $k$, constants $p,\varepsilon,\gamma\in(0,1)$ and $Q_k\subseteq Q^d(k)$ satisfying \emph{\ref{itm: p}}, \emph{\ref{itm: expansion}} and \emph{\ref{itm: cubes hit equally}}. Then, for every set $S\subseteq V(Q_k)$ of size $|S|\leq 3/5\cdot|V(Q_k)|$ such that every vertex in $S$ has degree at least $\varepsilon \tbinom{d}{k}$ in $Q_k$, we have that $e_{Q_k}(S,V(Q_k)\setminus S)\geq \tbinom{d}{k}\cdot|S|/d^3$.
\end{lemma}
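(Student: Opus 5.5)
We prove the bound by double counting over the family $\cQ_{k,m}$. Every edge of $Q^d(k)$ lies in the same number of cubes of $\cQ_{k,m}$; call this number $N_e$. Every vertex lies in $N_v=|\cD_{k,m}|$ cubes, one for each $D$, by Remark~\ref{rem: partition}. Since each cube has $2^m$ vertices and $m2^{m-1}$ edges, double counting gives
\[
N_e\cdot\tbinom{d}{k}2^{d-1}=|\cQ_{k,m}|\,m2^{m-1},\qquad N_v 2^d=|\cQ_{k,m}|2^m,
\]
so $N_e=mN_v/\binom{d}{k}$. Hence
\[
e_{Q_k}(S,V(Q_k)\setminus S)=\frac{1}{N_e}\sum_{H}e_{H\cap Q_k}(S\cap V(H),V(H\cap Q_k)\setminus S)\ge\frac{1}{N_e}\sum_{H\ \mathrm{good}}|N_{H\cap Q_k}(S_H)|,
\]
where $S_H$ is $S\cap V(H)$ with the low-degree vertices removed. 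It therefore suffices to find good cubes carrying total local boundary at least $\gamma\,\Omega(\sum_H|S\cap V(H)|)/(\sqrt d\,\mathrm{poly}(d))$, because $\sum_H|S\cap V(H)|=N_v|S|$. Dividing by $N_e$ then produces the factor $\binom{d}{k}/m$, and a polynomial loss still leaves $\binom{d}{k}|S|/d^3$.

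\textbf{Step 1: high-degree vertices are good in most cubes} (the role of Lemma~\ref{lem: vertex is good}). Fix $v\in S$ with $\deg_{Q_k}(v)\ge\eps\binom dk$. A uniformly random $D\in\cD_{k,m}$ gives $m$ uniformly distributed $k$-sets through $v$, and each of them is a $Q_k$-edge with probability at least $\eps$. The events are weakly dependent, since the $e_i$ are only constrained to be disjoint and $km\le d/(3k)$. A concentration argument (exposing the $e_i$ sequentially, or using Lemma~\ref{l: azuma}) should show that, except for a $o(1/d)$ fraction of the $D$, $v$ has degree at least $\eps m/2\ge\eps^2d/(8k^2)$ in $Q(D,v)\cap Q_k$. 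Removing the bad incidences costs only a negligible part of $\sum_H|S\cap V(H)|$.

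\textbf{Step 2: many cubes are in the intermediate regime} (the role of Lemmas~\ref{lem:diameter} and~\ref{lem:cont}). We need a $1/\mathrm{poly}(d)$ proportion of the incidence mass $\sum_H|S\cap V(H)|$ to sit on cubes with $|S\cap V(H)|\le\frac34|V(H\cap Q_k)|$, so that condition~\ref{itm: expansion} applies. Cubes that are almost fully occupied, say with $|S\cap V(H)|\ge(1-\delta)p2^m$, are the problem. The plan is a connectivity/continuity argument.
\begin{enumerate}[label=(\alph*)]
\item Cubes $H,H'$ sharing half their vertices, obtained by swapping one direction $e_i$, have occupancy fractions differing by at most a bounded amount: by condition~\ref{itm: cubes hit equally}, if $H$ is almost full then $H'$ is at least about half full.
\item The "cube graph'' in which such swaps are adjacent has polynomial diameter (this is what Lemma~\ref{lem:diameter} should give), and it reaches all cubes.
\item By~\ref{itm: p}, the average occupancy is at most about $3/5$, since $|S|\le\frac35|V(Q_k)|$.
\end{enumerate}
If nearly all the mass sat on cubes with occupancy above $3/4$, averaging would contradict the $3/5$ bound. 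Combined with the bounded-jump property along paths from full cubes to lighter ones, this shows that a $1/\mathrm{poly}(d)$ fraction of the mass lies on cubes with occupancy in $[\delta,3/4]$. The main obstacle is making this quantitative: ruling out that $S$ splits cubes into mostly-full and mostly-empty ones, with the intermediate cubes carrying little weight. I would handle it by a swap-path averaging in which each path from a full cube to an empty one must pass through an intermediate cube, and then count multiplicities along paths, which costs polynomial factors.

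\textbf{Step 3: conclude.} Apply~\ref{itm: expansion} on the good intermediate cubes, which gives $|N(S_H)|\ge\gamma|S_H|/\sqrt d$. Sum over these cubes and divide by $N_e$. After the polynomial losses from Steps 1 and 2, this gives at least $\binom{d}{k}|S|/d^3$ for large $d$.
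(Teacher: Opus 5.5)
Your architecture matches the paper's proof: double counting over $\cQ_{k,m}$, with every edge of $Q^d(k)$ lying in $N_e=mN_v/\tbinom dk$ cubes; Lemma~\ref{lem: vertex is good} for the low-degree incidences; and Lemmas~\ref{lem:diameter} and~\ref{lem:cont} to find cubes where~\ref{itm: expansion} applies. However, Step~2, which carries all the difficulty, is not proved, and part of the reasoning you give for it is incorrect.

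Write $N=|\cQ_{k,m}|$ and $\Sigma=\sum_H|S\cap V(H)|=N_v|S|$, and call $H$ dense if $|S\cap V(H)|\ge\frac23p2^m$. Averaging against $|S|\le\frac35|V(Q_k)|$ does \emph{not} prevent almost all of $\Sigma$ from sitting on dense cubes. It only bounds their \emph{number} $X$: from $X\cdot\frac23p2^m\le\Sigma\le(\frac35+o(1))pN2^m$ you get $X\le 10N/11$. You also do not need a lower bound $\delta$ on occupancies, and you cannot guarantee one (e.g.\ for $|S|=1$), since~\ref{itm: expansion} applies to every non-dense cube.

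The right dichotomy is this. Either the non-dense cubes carry mass $\Sigma_s\ge\Sigma/2$, or the dense cubes carry $\Sigma_d\ge\Sigma/2$. In the second case, since $\Sigma_d\le(1+\eps)Xp2^m$, you need $X/\mathrm{poly}(d)$ moderate cubes, i.e.\ cubes with $\frac17p2^m\le|S\cap V(H)|\le\frac23p2^m$.

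Deducing this from ``every dense-to-non-dense path meets a moderate cube'' is the real content. ``Count multiplicities along paths'' does not do it on its own: for an arbitrary choice of short paths, nothing prevents a few moderate cubes from lying on almost all of them. The missing idea is load balancing by symmetry.
\begin{enumerate}[label=(\arabic*)]
\item Since $G_\square$ is vertex-transitive, average one path system $\cP_H$ (all lengths $\le4kd$ by Lemma~\ref{lem:diameter}) over $\mathrm{Aut}(G_\square)$.
\item This gives a multiset of paths in which every pair of distinct cubes is joined by the same number $L$ of paths, and every cube lies on the same number $M$ of them.
\item Counting vertex--path incidences gives $MN\le(4kd+1)L\tbinom N2$, so $M=O(kd\,LN)$.
\item There are at least $LX(N-X)\ge LXN/11$ dense-to-non-dense paths, and each contains a moderate cube by Lemma~\ref{lem:cont}. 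Hence there are $\Omega(LXN/M)=\Omega(X/(kd))$ moderate cubes, each carrying at least $p2^m/7$.
\item Therefore $\Sigma_s=\Omega(\Sigma/(kd))$ in both cases.
\end{enumerate}

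There are also two smaller problems in Steps~1 and~3. First, the inequality $e_{H\cap Q_k}(S\cap V(H),V(H\cap Q_k)\setminus S)\ge|N_{H\cap Q_k}(S_H)|$ is false as written. Neighbours of $S_H$ may be the discarded low-degree vertices of $S$, so you must subtract $|(S\cap V(H))\setminus S_H|$. The paper handles this by only using non-dense cubes in which at most a $1/d^2$ fraction of $S\cap V(H)$ has low degree.

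Second, and as a consequence, the per-vertex fraction of bad cubes must be small compared with $\gamma/(kd^{3/2})$. The only guaranteed usable mass is $\Omega(\Sigma/(kd))$, and each good incidence yields only about $\gamma/\sqrt d$ boundary. Your target of $o(1/d)$ in Step~1 is therefore too weak. The exponential bound $2^{-\eps^2d/(48k^2)}$ of Lemma~\ref{lem: vertex is good} is ample: discarding cubes with too many low-degree vertices loses at most $d^2|S|N2^{m-d-\eps^2d/(48k^2)}$ of mass.
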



To prepare the setup for the proof of Lemma~\ref{lem: bootstrapped expansion}, we first introduce an auxiliary graph $G_{\square}$ with vertex set $\cQ_{k,m}$ where $H,H'\in\cQ_{k,m}$ form an edge if $|V(H)\cap V(H')|=2^{m-1}$, 
and prove in Lemma~\ref{lem:diameter} that it has bounded diameter. This will allow us to transition between cubes containing many vertices of $S$ with cubes containing few such vertices via short paths. 
We then prove Lemma~\ref{lem:cont} showing that any path in $G_{\square}$ from a cube which heavily intersects $S$ to a cube which does not must pass through a cube which ``moderately'' intersects $S$. 
Finally, in Lemma~\ref{lem: vertex is good}, we show that a vertex of large degree in $Q_k$ fails to have large degree inside only a tiny fraction of the cubes containing it.

Together, these three ingredients imply that many cube-vertex incidences of $S$ occur inside cubes where the expansion assumption~\ref{itm: expansion} applies. 
Summing the resulting local boundary contributions and dividing by the number of times each edge is counted then gives the desired global lower bound on $e_{Q_k}(S,V(Q_k)\setminus S)$.


\begin{lemma}\label{lem:diameter}
$G_{\square}$ is a vertex-transitive graph and the diameter of $G_{\square}$ is at most $4kd$. 
\end{lemma}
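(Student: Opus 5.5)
The plan is to make both claims concrete by identifying a few explicit \emph{moves}: pairs of cubes that are adjacent in $G_{\square}$. Vertex-transitivity then comes from symmetries of $Q^d$, and the diameter bound from composing moves.

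\emph{Vertex-transitivity.} Every automorphism of $Q^d$ of the form $x\mapsto \sigma(x)+a$, with $\sigma$ a coordinate permutation and $a\in\mathbb F_2^d$, maps $Q(D,v)$ to $Q(\sigma(D),\sigma(v)+a)$. In particular it maps $\cQ_{k,m}$ to itself. Since it is a bijection on $V(Q^d)$, it preserves intersection sizes, so it induces an automorphism of $G_{\square}$. Given $Q(D,v)$ and $Q(D',v')$, I will choose $\sigma$ mapping the $m$ disjoint $k$-sets of $D$ onto those of $D'$ (both use $km$ coordinates), and then $a=\sigma(v)+v'$. This gives transitivity.

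\emph{Basic move (direction swap).} Let $H=Q(D,v)$ with $D=\{e_1,\dots,e_m\}$. Let $e'$ be a $k$-set disjoint from $\bigcup_{j\ne i}e_j$ with $e'\ne e_i$, and put $D'=D\setminus\{e_i\}\cup\{e'\}$. I expect that $V(Q(D,v))\cap V(Q(D',w))$ is exactly the half-cube $\{w+\sum_{j\in I}\mathbf 1_{e_j}: I\subseteq[m]\setminus\{i\}\}$ for every $w\in V(Q(D,v))$. The check is that a vertex of $Q(D',w)$ that uses $e'$ differs from $w$ on $e'\setminus\bigcup_{j\neq i}e_j$ in a way that cannot be produced by combinations of the $e_j$. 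This needs some care when $e'\cap e_i\ne\emptyset$, so I would first restrict to $e'$ disjoint from $e_i$ as well, where the claim is clean. Hence $Q(D,v)\sim Q(D',w)$ in $G_{\square}$.

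Composing two swaps gives a \emph{translation}: $Q(D,w)\to Q(D',w)\to Q(D,w+\mathbf 1_{e'})$, where the second step swaps $e'$ back to $e_i$ using the base point $w+\mathbf 1_{e'}\in Q(D',w)$. So in two steps we translate by $\mathbf 1_{e'}$ for any $k$-set $e'$ disjoint from $\bigcup_j e_j$.

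\emph{Diameter.} Fix $H=Q(D,v)$ and $H'=Q(D',v')$. First I change directions one at a time, at most $2m$ swaps, to reach the directions $D'$. To keep each new set disjoint from the current ones, I route through an auxiliary family $D''$ disjoint from both $D$ and $D'$; this is possible since $3km\le d/k<d$. Next I translate by $x=v+v'$. I decompose $x$ into $k$-sets avoiding the current directions, which is the point needing care. For coordinates $a,b$, write $\mathbf 1_{\{a,b\}}=\mathbf 1_{\{a\}\cup T}+\mathbf 1_{\{b\}\cup T}$ with $T$ a $(k-1)$-set of free coordinates. If $x$ has odd weight, add one more $k$-set; this is the place where $k$ odd is used, and it fixes the parity. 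This writes $x$ as a sum of at most $d+1$ admissible $k$-sets, hence costs at most $2d+2$ steps.

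The main obstacle is coordinates $a$ of $x$ lying inside $\bigcup D'$, where the translation move is not directly available. I would handle this by first translating with the directions $D''$ (whose support avoids $\operatorname{supp}(x)\cap\bigcup D'$ after a suitable choice, or else after one more round of $\le 2m$ swaps) and only then swapping to $D'$. Alternatively, I would note that any component of $x$ in the span of $D'$ is irrelevant modulo the cube. The total is at most $2d+2+O(m)\le 4kd$, comfortably within the bound.
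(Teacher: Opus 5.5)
Your proof is correct, and your vertex-transitivity argument is the paper's. Your diameter argument takes a different route.

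\emph{The paper's route.} It fixes $u\in V(H)$, $v\in V(H')$ and a path $u=u_0,\dots,u_r=v$ in $Q^d(k)$ with $r\le 3d$. The path comes from writing each hypercube edge as $\mathbf{1}_e+\mathbf{1}_f+\mathbf{1}_g$ for three $k$-sets, which is where $k$ odd enters. For each step with difference set $e$, it swaps out the at most $k$ current directions meeting $e$ and swaps $e$ in. It then uses $Q(D',u_i)=Q(D',u_{i+1})$. It finishes with $2m$ swaps to $D'$ through an auxiliary disjoint family, as you do.

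\emph{Your route.} You only move along $k$-sets in coordinates not used by the current directions, so nothing has to be cleared and each translation costs two swaps. Parity is fixed by one extra $k$-set rather than by the three-set identity. This gives diameter $2m+2d+O(k)=O(d)$ instead of $O(kd)$. (The paper's count $(k+1)\cdot 3d+2m$ in fact slightly exceeds $4kd$ when $k=3$; this is harmless, since only an $O(kd)$ bound is used later.) The price is the blocked-coordinates issue, which the paper's step-by-step clearing avoids automatically.

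Two points should be tightened.

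First, the swap does not need $e'\cap e_i=\emptyset$. Suppose $\sum_{j\in I}\mathbf{1}_{e_j}=\sum_{j\in J}\mathbf{1}_{e_j}+\mathbf{1}_{e'}$ with $J\subseteq[m]\setminus\{i\}$. Since $e'$ avoids $\bigcup_{j\ne i}e_j$, this gives $I\triangle J\subseteq\{i\}$, which is impossible unless $e'=e_i$. So the intersection is the half-cube whenever $e'\neq e_i$.

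Second, of your two remedies for the blocked coordinates, only the first works. The remark about the span of $D'$ does not suffice, because the restriction of $x$ to some $e\in D'$ need not be $0$ or $\mathbf{1}_e$. Let $y$ be the vector agreeing with $x$ on $\bigcup D'$ and vanishing elsewhere. The schedule that works is:
\begin{enumerate}
\item swap $D\to D''$, using $m$ moves;
\item translate by $y$ using $k$-sets inside $[d]\setminus\bigcup D''$, which is possible since $\bigcup D'\cap\bigcup D''=\emptyset$;
\item swap $D''\to D'$, using $m$ moves;
\item translate by $x+y$, which is supported on $[d]\setminus\bigcup D'$.
\end{enumerate}
Since $y$ and $x+y$ may each have odd weight, each translation phase may need its own parity correction. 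The total is therefore at most $2m+2(d+2k+2)$ rather than $2m+2d+2$, still well below $4kd$.
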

\begin{proof}
The vertex-transitivity of $G_\square$ follows from the symmetry of $Q^d(k)$ under permutation of the coordinates and translations. We turn to show that the diameter of $G_{\square}$ is at most $4kd$. 

Fix $H,H'\in\cQ_{k,m}$. We construct a sequence $H=H_0,\ldots,H_{4kd}=H'\in\cQ_{k,m}$ such that, for each $i\in [0,4kd-1]$, either $H_i=H_{i+1}$ or $|V(H_i)\cap V(H_{i+1})|=2^{m-1}$. 
Note that, for all $D,D'\in\cD_{k,m}$ such that $|D\cap D'|=m-1$ and for all $w\in V(Q^d)$, $Q(D,w)$ and $Q(D',w)$ intersect in $2^{m-1}$ vertices. 
Thus, it is enough to show that, for every $i\in [0,2d-1]$, $H_i=Q(D,w)$ and $H_{i+1}=Q(D',w)$ for some $D,D',w$ as above.

Before constructing this sequence, we show that every two neighbours $u$ and $v$ in $Q^d$ are connected by a path of length 3 in $Q^d(k)$: indeed, if $uv$ is an edge in direction $i$, since $k+1$ is even, we have $u=v+\mathbf{1}_{e}+\mathbf{1}_{f}+\mathbf{1}_{g}$ for some triplet of $k$-sets where $i\notin g$, $|e\cap f|=(k-1)/2$ and the symmetric difference of $e$ and $f$ is $g\cup \{i\}$.
Therefore, for any $u\in V(H)$ and $v\in V(H')$, there exists a path $u=u_0,u_1,\ldots, u_r=v$ in $Q^d(k)$ with $r\le 3d$. 

First, we construct a subsequence $H=H_0,\ldots, H_{(k+1)r}\in \cQ_{k,m}$ with $v\in H_{(k+1)r}$ and, for all $i\in [0,(k+1)r-1]$, $H_i=Q(D,u_{\lfloor i/(k+1)\rfloor})$ and $H_{i+1}=Q(D',u_{\lfloor i/(k+1)\rfloor})$ for some $D,D'\in\cD_{k,m}$ with $|D\cap D'|\ge m-1$. We argue by induction. Suppose that $H_0,\ldots,H_{(k+1)i}$ have been constructed for some $i\in \{0,\ldots,r-1\}$.
Let $D\in \cD_{k,m}$ be such that $H_{(k+1)i}=Q(D,u_i)$. Denote by $e\subseteq [d]$ the set of $k$ coordinates where $u_i$ and $u_{i+1}$ differ. Let $e_1,\ldots,e_s\subseteq D$ be the sets in $D$ intersecting $e$ and note that, since $e$ has size $k$, $s\le k$. We define $H_{(k+1)i+1},\ldots,H_{(k+1)i+s}$ with $H_{(k+1)i+j}=Q(D_j,u_i)$, where $D_0 = D$ and $D_j$ is obtained from $D_{j-1}$ by replacing $e_j$ with arbitrary set $e'_j$ within $[d]\setminus (e\cup\bigcup_{e'\in D_{j-1}} e')$ of size $k$. Note that there are always enough available coordinates for $e_j$ since $k(m+1)\le d/(2k)$. 
Finally, fix any $e'\in D_{s}$ and let $D'$ be the set obtained from $D_s$ by replacing $e'$ with $e$. 
Setting $H_{(k+1)i+s+1}=\ldots=H_{(k+1)(i+1)}$ to be equal to $Q(D',u_i)$ which contains $u_i+e=u_{i+1}$ concludes the induction. 

Next, we extend the sequence by constructing $H_{(k+1)r+1},\ldots,H_{(k+1)r+2m}=H'$ such that, for every integer $i\in [(k+1)r,(k+1)r+2m-1]$, $H_i=Q(D,v)$ and $H_{i+1}=Q(D',v)$ for some $D,D'\in\cD_{k,m}$ with $|D\cap D'|\ge m-1$. 
Indeed, note that $H_{(k+1)r}=Q(D_{(k+1)r},v)$ and $H'=Q(D_{(k+1)r+2m},v)$ for some $D_{(k+1)r}$ and $D_{(k+1)r+2m}$ which differ in at most $2m$ sets. 
Denote by $E_1$ (respectively $E_2$) the union of the sets in $D_{(k+1)r}$ (respectively $D_{(k+1)r+2m}$), and
set $E=[d]\setminus (E_1\cup E_2)$. Since $|E_1|=|E_2|=km$ and $m=d/(3k^2)$, we have $|E|\ge km$. 
Let $D=\{e_1,\ldots, e_m\}$ be an arbitrary collection of $m$-many disjoint sets of size $k$ in $E$. For $i\in [m]$, we obtain $D_{(k+1)r+i}$ from $D_{(k+1)r+i-1}$ by replacing some set of $D_{(k+1)r}$ by some set of $D$. Then, for $i\in [(k+1)r+m,(k+1)r+2m]$, let $D_{(k+1)r+i}$ be obtained from $D_{(k+1)r+i-1}$ by replacing some set of $D$ by some set of $D_{(k+1)r+2m}$. 
Finally, for every $i\in [2m]$, set $H_{(k+1)r+i}=Q(D_{(k+1)r+i},v)$. 

The lemma follows by observing that $(k+1)r+2m\leq (k+1)(3d)+2d/(3k^2)\leq 4kd$ (recall $k\ge 3$) and defining $H_i=H'$ for every $i\in [(k+1)r+2m,4kd]$.
\end{proof}

We now show that any path in $G_{\square}$ from a cube which heavily intersects $S$ to a cube which does not, must pass through a cube which ``moderately'' intersects $S$. Formally, given a set $S\subseteq V(Q_k)$, we say that $H\in\cQ_{k,m}$ is \textit{$S$-dense} if it contains at least $2/3\cdot p2^m$ vertices of $S$. Further, we say that $H$ is \textit{$S$-moderate} if it contains between $1/7\cdot p2^m$ and $2/3\cdot p2^m$ vertices of $S$.

\begin{lemma}\label{lem:cont}
Fix $p\in(0,1)$ and $Q_k\subseteq Q^d(k)$ satisfying \emph{\ref{itm: cubes hit equally}}.
Consider a set $S\subseteq V(Q_k)$ and $H,H'\in\cQ_{k,m}$ such that $H$ is $S$-dense and $H'$ is not $S$-dense. 
Then, every path in $G_{\square}$ from $H$ to $H'$ contains an $S$-moderate vertex.
\end{lemma}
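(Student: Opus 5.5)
The argument is a discrete intermediate-value statement. Fix a path $H=H_0,H_1,\ldots,H_t=H'$ in $G_{\square}$. Since $H_0$ is $S$-dense and $H_t$ is not, there is a first index $i$ with $H_i$ $S$-dense and $H_{i+1}$ not $S$-dense. If $H_{i+1}$ is $S$-moderate we are done. It therefore suffices to rule out $|V(H_{i+1})\cap S|<\tfrac17 p2^m$. In other words, we show that a single step in $G_{\square}$ cannot carry the count of $S$-vertices from at least $\tfrac23 p2^m$ to below $\tfrac17 p2^m$.

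To do this, write $I=V(H_i)\cap V(H_{i+1})$, which has size $2^{m-1}$ by the definition of adjacency in $G_{\square}$. Condition~\ref{itm: cubes hit equally} gives two estimates:
\[
|V(H_i\cap Q_k)|\le(1+\varepsilon)p2^m, \qquad |I\cap V(Q_k)|\ge(1/2-\varepsilon)p2^m.
\]
Since $S\subseteq V(Q_k)$, the $S$-vertices of $H_i$ lying outside $I$ number at most
\[
|V(H_i\cap Q_k)|-|I\cap V(Q_k)|\le(1/2+2\varepsilon)p2^m.
\]
Hence
\[
|V(H_{i+1})\cap S|\ge|I\cap S|\ge\Big(\tfrac23-\tfrac12-2\varepsilon\Big)p2^m=\Big(\tfrac16-2\varepsilon\Big)p2^m.
\]
This is at least $\tfrac17 p2^m$ as soon as $\varepsilon\le 1/84$. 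So $H_{i+1}$ is not $S$-dense but has at least $\tfrac17 p2^m$ vertices of $S$, which means it is $S$-moderate.

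The only real point to check is that the constants fit. The gap $\tfrac23-\tfrac12=\tfrac16$ exceeds $\tfrac17$, so there is slack, and this needs $\varepsilon$ small. That assumption is implicit in the setting (in Section~\ref{sec: proof of Theorems} one takes $\varepsilon$ small). If needed I would state it explicitly, with $\varepsilon\le 1/84$ sufficing. Nothing else is needed: no probabilistic input, only the two deterministic size conditions in~\ref{itm: cubes hit equally} and the fact that adjacent cubes share exactly half their vertices.
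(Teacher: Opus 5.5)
Your proof is correct and follows essentially the same route as the paper: take the step on the path where an $S$-dense cube is followed by a non-dense one, then use the two size estimates from \ref{itm: cubes hit equally} to get $|V(H_{i+1})\cap S|\ge(1/6-2\varepsilon)p2^m\ge p2^m/7$. The differences are cosmetic. The paper subtracts the non-$S$ vertices of $V(H_i)\cap V(Q_k)$ (at most $(1/3+\varepsilon)p2^m$) from the intersection, while you subtract the $S$-vertices of $H_i$ outside the intersection; your explicit condition $\varepsilon\le 1/84$ is the smallness the paper uses implicitly.
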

\begin{proof}
Fix a path from $H$ to $H'$, denote by $Q\in \cQ_{k,m}$ the last $S$-dense cube on it, and let $Q'\in\cQ_{k,m}$ be the next cube; note that such $Q,Q'$ exists since $H'$ is not $S$-dense. By \ref{itm: cubes hit equally}, we have that $|V(Q)\cap V(Q_k)|\leq (1+\varepsilon)p2^m$ and $|V(Q)\cap V(Q')\cap V(Q_k)|\geq (1/2-\varepsilon)p2^{m}$. Since $Q$ is $S$-dense, we have that $|(V(Q)\cap V(Q_k))\setminus S|\leq (1/3+\varepsilon)p2^m$. Therefore, 
\begin{align*}
   |V(Q')\cap S|\geq |V(Q)\cap V(Q')\cap V(Q_k)\cap S|\geq (1/2-\eps)p2^m-(1/3+\eps)p2^m=(1/6-2\varepsilon)p2^m\geq 1/7\cdot p2^m. &\qedhere
\end{align*}
\end{proof}

We now show that a vertex of large degree in $Q_k$ fails to have large degree inside only a very small fraction of the cubes containing it. Set $N=|\cQ_{k,m}|$. 
\begin{lemma}\label{lem: vertex is good}
Fix an odd integer $k\ge 1$, $\eps=\eps(k)$ suitably small and a vertex $v\in V(Q_k)$ with degree at least $\varepsilon \tbinom{d}{k}$ in $Q_k$. Then, there are at most $N2^{m-d-\varepsilon^2 d/(48k^2)}$ hypercubes $H\in\cQ_{k,m}$ with $v\in V(H)$ such that $v$ has degree less than $\varepsilon^2 d/(8k^2)$ in $H\cap Q_k$.
\end{lemma}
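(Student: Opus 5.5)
Fix $v$. By Lemma~\ref{lem: choice of v}, the cubes containing $v$ are exactly $Q(D,v)$ for $D\in\cD_{k,m}$, one for each $D$. Since $N=2^{d-m}|\cD_{k,m}|$, the claim is that a uniformly random $D\in\cD_{k,m}$ satisfies
\[
\mathbb P_D\big[\deg_{Q(D,v)\cap Q_k}(v)<\varepsilon^2 d/(8k^2)\big]\le 2^{-\varepsilon^2 d/(48k^2)}.
\]
The neighbours of $v$ in $Q(D,v)$ are $v+\mathbf 1_{e_i}$ for $i\in[m]$. So the degree of $v$ in $Q(D,v)\cap Q_k$ is the number of indices $i$ for which $e_i\in\mathcal N$. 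Here $\mathcal N$ is the family of $k$-sets $e\subseteq[d]$ with $v(v+\mathbf 1_e)\in E(Q_k)$. By hypothesis $|\mathcal N|\ge\varepsilon\binom dk$, so a uniformly random $k$-set lies in $\mathcal N$ with probability at least $\varepsilon$.

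To sample $D$, I would pick $e_1,\dots,e_m$ sequentially, each uniformly among the $k$-subsets of $[d]\setminus(e_1\cup\dots\cup e_{i-1})$. Since $D$ is unordered, the induced distribution on the set $D$ is uniform on $\cD_{k,m}$. At step $i$ at most $km\le d/(3k)$ coordinates are used. The number of $k$-sets meeting the used coordinates is at most $km\binom{d}{k-1}\le \frac{k^2m}{d-k+1}\binom dk\le \frac{1}{2k}\binom dk\cdot(1+o(1))$. This is too big compared to $\varepsilon\binom dk$ when $\varepsilon$ is small, so this crude estimate does not suffice. Instead I would run the sequential selection only for the first $m'=\varepsilon d/(4k^2)$ steps and then complete $D$ arbitrarily-uniformly. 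During these steps the used set has size at most $km'=\varepsilon d/(4k)$. The fraction of $k$-sets meeting it is then at most $k\cdot km'/d\le\varepsilon/4\cdot(1+o(1))$. Hence, conditionally on the past, each $e_i$ with $i\le m'$ lies in $\mathcal N$ with probability at least $(\varepsilon-\varepsilon/2)\binom dk/\binom{d}{k}=\varepsilon/2$. (Dividing by the number of available $k$-sets only helps.)

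The count of indices $i\le m'$ with $e_i\in\mathcal N$ therefore stochastically dominates $\mathrm{Bin}(m',\varepsilon/2)$, via a standard coupling for sequential trials with conditional success probability at least $\varepsilon/2$. This binomial has mean $\varepsilon^2 d/(8k^2)\cdot$... I would adjust constants so the mean is $\varepsilon^2 d/(4k^2)$, for instance by taking $m'=\varepsilon d/(2k^2)\le m$, valid for $\varepsilon\le 2/3$. The used set still has size at most $\varepsilon d/(2k)$, and the bad fraction is at most about $\varepsilon/2$. Keeping a conditional probability of at least $\varepsilon/2$ needs a little care, so I would shrink $m'$ slightly if needed. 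Chernoff (Lemma~\ref{l: chernoff}) with $t$ equal to half the mean then gives a failure probability at most $2\exp(-\mathrm{mean}/12)$. With mean $\varepsilon^2d/(4k^2)$ this is $\exp(-\varepsilon^2 d/(48k^2))\cdot 2\le 2^{-\varepsilon^2d/(48k^2)}$ up to the constant, which the slack in choosing $m'$ absorbs.

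The main obstacle is this bookkeeping. The count must be over unordered $D$ while the sampling is sequential, which the uniform sequential construction handles. The depletion of coordinates must not destroy the density $\varepsilon$ of $\mathcal N$, and the statement's constants $1/8$ and $1/48$ must come out. The remaining steps are routine.
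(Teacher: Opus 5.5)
Your proposal is correct and matches the paper's proof. Like the paper, you sample $D$ sequentially, run only the first $\varepsilon d/(2k^2)$ steps, show that each new $e_i$ lands in the neighbourhood family with conditional probability at least $\varepsilon/2$, and finish with Chernoff for $\mathrm{Bin}(\varepsilon d/(2k^2),\varepsilon/2)$. Two small tidy-ups: counting with $\binom{d-1}{k-1}=\frac{k}{d}\binom{d}{k}$ instead of $\binom{d}{k-1}$ makes the excluded fraction at most exactly $\varepsilon/2$, so you never need to shrink $m'$; and the leftover factor $2$ from Chernoff is absorbed because $2e^{-x}\le 2^{-x}$ once $x=\varepsilon^2 d/(48k^2)$ is large.
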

\begin{proof}
    As every $H\in\cQ_{k,m}$ contains $2^m$ vertices of $Q^d$, we get by symmetry and by Remark~\ref{rem: partition} that every vertex of $Q^d$, in particular $v$, is in exactly $N2^{m-d}$ hypercubes in $\cQ_{k,m}$.
    
    Denote by $D_v\subseteq \binom{[d]}{k}$ the family of $k$-subsets $e$ such that $v(v+\mathbf{1}_{e})$ is an edge in $Q_k$. 
    By assumption, we have $|D_v|\geq \varepsilon \binom{d}{k}$. 
    By Remark~\ref{rem: partition}, for every family $D$ of disjoint subsets $e_1,\ldots,e_m$ of size $k$ of $[d]$, there is exactly one cube $H\in\cQ_{k,m}$ which contains $v$, namely $H=Q(D,v)$, and the degree of $v$ in $H$ is $|D\cap D_v|$. 
    Therefore, by the probabilistic method, the statement follows once we show that a uniformly random choice of $D\in \cD_{k,m}$ satisfies $|D\cap D_v|<\varepsilon^2 d/(8k^2)$ with probability at most $2^{-\varepsilon^2 d/(48k^2)}$. 
    
    Consider the following procedure to sample $D$ uniformly at random. We iteratively sample $e_1,\ldots,e_m$, where we select $e_i$ uniformly at random from $\binom{[d]\setminus(e_1\cup\cdots\cup e_{i-1})}{k}$. 
    Note that the resulting sets $e_1,\ldots,e_m$ are chosen uniformly at random under the condition of being disjoint so that $D=\{e_1,\ldots,e_m\}$ is sampled uniformly at random from $\cD_{k,m}$. 
    
    Set $E_i=e_1\cup\cdots\cup e_i$ and $i_{\max} = \eps d/(2k^2)$. 
    We bound the probability that the process at stage $i_{\max}$ does not yet witnesses the presence of $\eps^2 d/(8k^2)$ edges.
    First, for every $i\leq i_{\max}$, we have $|E_i|\leq \varepsilon d/(2k)$. Further, every $j\in[d]$ is in 
    $\binom{d-1}{k-1}$
    sets of size $k$. 
    Thus, for every $i\le i_{\max}$, the number of sets $e\in D_v$ which do not intersect any of the $e'\in E_i$ is at least $|D_v|-|E_i|\cdot \binom{d-1}{k-1}\ge \eps \tbinom{d}{k}/2$. 
    Since the number of ways to choose $e_{i+1}$ uniformly at random is at most 
    $\tbinom{d}{k}$, it follows that $e_{i+1}\in D_v$ with probability at least $\varepsilon/2$. Therefore, $|D\cap D_v|$ stochastically dominates $\mathrm{Bin}(i_{\max},\varepsilon/2)$. By Lemma \ref{l: chernoff}, we get that the probability there are less than $\varepsilon^2 d/(8k^2)$ sets of $D_v$ in $D$ is at most $e^{-\varepsilon^2d/(48k^2)}\leq 2^{-\varepsilon^2d/(48k^2)}$.
\end{proof}

With these tools at hand, we are now ready to prove Lemma~\ref{lem: bootstrapped expansion}. 
\begin{proof}[Proof of Lemma~\ref{lem: bootstrapped expansion}]
Fix any set $S\subseteq V(Q_k)$ with $|S|\leq 3/5\cdot |V(Q_k)|$ such that every vertex in $S$ has degree at least $\varepsilon \tbinom{d}{k}$ in $Q_k$. Let $\cQ_{dense}\subseteq\cQ_{k,m}$ be the set of hypercubes which are $S$-dense and set $\cQ_{sparse} = \cQ_{k,m}\setminus\cQ_{dense}$. 
Further, let $\cQ\subseteq \cQ_{sparse}$ be the family of hypercubes $H\in \cQ_{sparse}$ in which all but at most $(1/d^2)$-proportion of the vertices of $S\cap V(H)$ have degree at least $\eps^2d/(8k^2)$ in $H\cap Q_k$.

Further, fix $H\in \cQ$ and denote by $S'\subseteq S\cap V(H)$ the set of vertices with degree at least $\eps^2d/(8k^2)$ in $H\cap Q_k$. 
Then, by \ref{itm: expansion} applied to $S'$, $|N_{H\cap Q_k}(S')|\ge \gamma |S'|/\sqrt{d}\ge (1-1/d^2)\gamma |S\cap V(H)|/\sqrt d$. Furthermore, by using that $|(S\cap V(H))\setminus S'|\le |S\cap V(H)|/d^2$, we obtain
\begin{align*}
  e_{H\cap Q_k}(S,V(H\cap Q_k)\setminus S)
  \geq (1-1/d^2)\gamma|S\cap V(H)|/\sqrt d-d\cdot |S\cap V(H)|/d^2\geq \gamma|S\cap V(H)|/(2\sqrt d).  
\end{align*} 

Recall our aim to estimate $e_{Q_k}(S, V(Q_k)\setminus S)$ as well as the relations $N=|\cQ_{k,m}|$, $e(Q^d(k)) = \tbinom{d}{k} 2^{d-1}$ and, for every $H\in \cQ_{k,m}$, $V(H)=2^m$ and $e(H)=m2^{m-1}$.
Moreover, by symmetry and double-counting of the pairs $(H,e)$ where $H\in \cQ_{k,m}$ and $e\in E(H)$, every edge of $Q^d(k)$ appears in the same number of hypercubes in $\cQ_{k,m}$, namely $Nm2^{m-1}/(\binom{d}{k}2^{d-1})$. Simplifying and using that $m\le d$ gives,
\begin{align}\label{eq: edges leaving S}
    e_{Q_k}(S,V(Q_k)\setminus S)\geq \left(\sum_{H\in \cQ}\frac{\gamma|S\cap V(H)|}{2\sqrt d}\right)\bigg/\left(\frac{N d 2^{m-d}}{\binom{d}{k}}\right).
\end{align}

For brevity, define 
\[\Sigma\coloneqq\sum_{H\in \cQ_{k,m}}|S\cap V(H)|,\; \Sigma_d\coloneqq\sum_{H\in \cQ_{dense}}|S\cap V(H)|,\; \Sigma_s\coloneqq\sum_{H\in \cQ_{sparse}}|S\cap V(H)|,\; \Sigma_{\cQ}\coloneqq\sum_{H\in \cQ}|S\cap V(H)|.\]

Next, we work towards showing that $\Sigma_{\cQ}\ge |S|\cdot N2^{m-d}/(330kd)$. By symmetry, every vertex $v\in V(Q^d(k))$ appears in $N\cdot 2^{m-d}$ hypercubes in $\cQ_{k,m}$ and thus $\Sigma= |S|N2^{m-d}$. Before dealing with $\Sigma_\cQ$, we first show that $\Sigma_s\geq \Sigma/(320kd)$. 

Fix a vertex $H\in V(G_{\square})$ and a family $\cP_H$ of paths of length at most $4kd$ containing, for every $H'\in V(G_{\square})$ distinct from $H$, exactly one path from $H$ to $H'$. 
By Lemma~\ref{lem:diameter}, such a choice of $\cP_H$ exists. Recall that $G_\square$ is vertex-transitive and denote by $\mathrm{Aut}(G_{\square})$ the automorphism group of $G_{\square}$. 
Define the family of paths $\cP'=\bigcup_{\psi\in\mathrm{Aut}(G_{\square})}\psi(\cP_H)$ (where repeated paths appear with multiplicity). Since $G_{\square}$ is vertex-transitive, the number of automorphisms mapping $H$ to any given vertex $\Gamma\in V(G_{\square})$ is the same; hence, for each $\Gamma\in V(G_{\square})$, this number equals $|\mathrm{Aut}(G_{\square})|/N$. Therefore, for any distinct pair $\Gamma,\Gamma'\in V(G_{\square})$, the family $\cP'$ contains the same number $L=\frac{2|\mathrm{Aut}(G_{\square})|}{N}$ of paths with endpoints $\Gamma$ and $\Gamma'$. Likewise, every vertex appears on the same number $M$ of paths in $\cP'$. Let us count the number of pairs $(v,P)$ for $P\in \cP'$ and $v\in V(P)$.
On one hand, $G_\square$ has $N$ vertices, each of which appears by definition on $M$ paths, so there are $N\cdot M$ such pairs. 
On the other hand, there are $L$ paths in $\cP'$ between any distinct pair in $V(G_\square)$. As each of these paths has length at most $4kd$, the number of pairs $(v,P)$ is at most $4kd\cdot L\cdot \binom{N}{2}$. We obtain $M\cdot N\leq 4kd \cdot L\cdot \binom{N}{2}$ and rearranging yields 
\begin{align}\label{eq: path double count}
    M\leq 2kd\cdot L\cdot (N-1)\leq 2kd\cdot L\cdot N.
\end{align}

Denote by $X$ the number of $S$-dense $H\in\cQ_{k,m}$. Recall $\Sigma=|S|N2^{m-d}\leq (3/5+o(1))pN2^{m}$, where we use \ref{itm: p} and $|S|\leq 3/5\cdot |V(Q_k)|$. Using \ref{itm: cubes hit equally}, it also holds that $\Sigma\geq X\cdot (2/3+o(1))\cdot p2^m$. 
Putting these two inequalities together, we obtain $X\leq (3/5+o(1))(3/2+o(1))\cdot N\leq 10N/11$. 
Since for any pair of vertices $u\neq v$ of $G_{\square}$, $\cP'$ contains $L$ paths between $u$ and $v$, 
it follows that $\cP'$ contains at least $L\cdot X\cdot (N-X)\ge L\cdot X\cdot N/11$ paths for which one endpoint is $S$-dense and the other is not. By Lemma~\ref{lem:cont}, every such path contains an $S$-moderate cube. 
Finally, as each $S$-moderate cube is on $M$ paths, using \eqref{eq: path double count}, there are at least $LXN/(11M)\geq X/(22kd)$ many $S$-moderate cubes. 

Thus, for any constant $\eps>0$, $\Sigma_d\le X\cdot (1+\varepsilon)p2^m$ and $\Sigma_s\ge (X/(22kd))\cdot (1-\varepsilon)p2^m/7$.
Hence, if $\Sigma_d\geq \Sigma/2$, 
\begin{equation}\label{eq:bdsigma_s}
\Sigma_s\ge \frac{(1-\eps)Xp2^m}{154kd}\ge \frac{(1-\eps)\Sigma_d}{154(1+\eps)kd}\ge \frac{\Sigma}{320kd}= \frac{|S| N 2^{m-d}}{320kd}.
\end{equation}
On the other hand, if $\Sigma_d\le \Sigma/2$, then $\Sigma_s= \Sigma-\Sigma_d\ge \Sigma/2$, so that in either case $\Sigma_s$ satisfies the bound from~\eqref{eq:bdsigma_s}.

We are ready to show our initial goal, namely that $\Sigma_{\cQ}\geq |S| N2^{m-d}/(330kd)$. For $H\in\cQ_{sparse}$, denote by $v_<(H,S)$ the number of vertices in $V(H)\cap S$ with degree less than $\varepsilon^2d/(8k^2)$ in $H\cap Q_k$. 
Then,
\[\sum_{H\in\cQ_{sparse}\setminus\cQ}|S\cap V(H)| \leq d^2\cdot \sum_{H\in\cQ_{sparse}\setminus\cQ}v_<(H,S), \text{ and by Lemma~\ref{lem: vertex is good},} \sum_{H\in\cQ_{k,m}}v_<(H,S)\leq |S|\cdot N2^{m-d-\varepsilon^2d/(48k^2)}.\] 
Hence, 
\begin{align*}
\Sigma_{\cQ} = \Sigma_s - \sum_{H\in \cQ_{sparse}\setminus \cQ}|S\cap V(H)|\geq \frac{|S| N 2^{m-d}}{320kd}-d^2|S|\cdot N2^{m-d-\varepsilon^2d/(48k^2)}\geq \frac{|S| N 2^{m-d}}{330kd}.
\end{align*}
Inserting this inequality in \eqref{eq: edges leaving S}, we obtain
\begin{align*}
    e_{Q_k}(S,V(Q_k)\setminus S)\geq \left(\frac{\gamma |S| N 2^{m-d}}{660kd^{3/2}}\right)\bigg/\left(\frac{Nd2^{m-d}}{\tbinom{d}{k}}\right)\geq \frac{\gamma|S| \tbinom{d}{k}}{660k d^{5/2}}\geq \binom{d}{k} \frac{|S|}{d^3}.&\qedhere
\end{align*}
\end{proof}

\section{Deriving Theorems \ref{thm:main1} and \ref{thm:main2}}\label{sec: proof of Theorems}

We start this section with one last auxiliary lemma about the degrees in the polytope at distance $k$. 
Recall that $P_k$ denotes the subset of edges in $P^d_p$ with endpoints at Hamming distance $k$.

\begin{lemma}\label{lem:min degree}
Fix an integer $k\ge 2$ and $p,\alpha\in (0,1)$. Then, whp every vertex in $P^d_p$ is either incident to at least $(1-\alpha)d$ edges in $P_1$ or to at least $p(1-p)^{2^k-2} \alpha^k \tbinom{d}{k}/4$ edges in $P_k$.
\end{lemma}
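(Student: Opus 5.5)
We fix a vertex $v$, condition on $v\in Q^d_p$, and study its $Q^d$-neighbourhood. Let $Z\subseteq [d]$ be the set of directions $i$ with $v+\mathbf{1}_{\{i\}}\notin Q^d_p$, so $|Z|\sim\mathrm{Bin}(d,1-p)$. Every $i\notin Z$ gives an edge of $P_1$ at $v$: the segment from $v$ to $v+\mathbf{1}_{\{i\}}$ is an edge of the cube, hence a face, and both endpoints are present. So if $|Z|\le \alpha d$, then $v$ has at least $(1-\alpha)d$ edges in $P_1$ and we are done. It remains to treat the case $|Z|>\alpha d$ and show that then $v$ has many edges in $P_k$.

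In that case we look at the $k$-sets $e\subseteq Z$; there are at least $\binom{\alpha d}{k}\ge \alpha^k\binom{d}{k}/2$ of them for large $d$ and fixed $k$. For each such $e$, let $A_e$ be the event that $u=v+\mathbf{1}_e$ is present and that the $2^k-2$ internal vertices of $Q^d[v,u]$ are all absent. By Remark~\ref{remark: edge via cube} (via Lemma~\ref{lem: poly long edge exists}), $A_e$ implies $vu\in P_k$. The $k$ vertices of $Q^d[v,u]$ adjacent to $v$ are already known to be absent because $e\subseteq Z$. Conditionally on the states of $v$ and its $Q^d$-neighbours, we therefore get
\[\mathbb P[A_e]=p(1-p)^{2^k-2-k}\ge p(1-p)^{2^k-2}.\]
Hence the conditional expected number of $P_k$-edges at $v$ is at least $p(1-p)^{2^k-2}\alpha^k\binom{d}{k}/2$, which is twice the target.

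For concentration, we set $f=\sum_{e\in\binom{Z}{k}}\mathbf 1_{A_e}$, a function of the independent states of the vertices at distance $2,\dots,k$ from $v$. A vertex $w$ with $w+v$ supported on a set $T$, $|T|=j$, lies in $Q^d[v,v+\mathbf 1_e]$ only if $T\subseteq e$, so it affects at most $\binom{d}{k-j}\le d^{k-2}$ of the indicators (as $j\ge2$). Lemma~\ref{l: azuma} with $k_w\le d^{k-2}$ and $t=\mathbb E[f]/2=\Theta(d^k)$ gives
\[\sum_w k_w^2\le \Big(\sum_{j\le k}\binom{d}{j}\Big)\, d^{2k-4}=O(d^{3k-4}),\]
so the failure probability is $\exp(-\Omega(d^{2k}/d^{3k-4}))=\exp(-\Omega(d^{4-k}))$. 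This is enough for $k\le 3$ but not for large $k$, and this is the main obstacle, since we need a bound of order $2^{-d}$ to union bound over all $v$.

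To fix this, we extract independence instead. We choose a family of $k$-subsets of $Z$ whose pairwise intersections have size at most $1$ (a partial Steiner system). For $e,e'$ in such a family, the subcubes $Q^d[v,v+\mathbf 1_e]$ and $Q^d[v,v+\mathbf 1_{e'}]$ share only $v$ and possibly one neighbour of $v$, both of known state, so the events $A_e$ are independent. Such families only have size $\Theta(d^2)$, however, which is too small for the $\binom{d}{k}$ count. The approach I would try is therefore to partition $\binom{Z}{k}$ into a small number of classes, and then show that each class has at least half its expected count of edges. For this I would use a bounded-difference estimate within each class, using Lemma~\ref{l: azuma} with the variance term $p\sum k_i^2$ restricted to the vertices that genuinely interact. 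Alternatively, I would use Janson's inequality for the lower tail of $f$, which fits increasing-in-absence events well: the absence events are monotone in the vertex states and $\Delta$ is controlled by pairs sharing large subsets. Janson gives $\exp(-\mu^2/(2\Delta))$ with $\mu=\Theta(d^k)$ and $\Delta=\sum_{|e\cap e'|\ge2}\mathbb P[A_e\cap A_{e'}]=O(d^{2k-2})$, yielding $\exp(-\Omega(d^2))\ll 2^{-d}$. I expect this Janson step to be the route that works. Finally, we take a union bound over the $2^d$ vertices $v$ to obtain the whp statement.
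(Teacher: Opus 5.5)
Your setup is the same as the paper's: you condition on $v$ and its $Q^d$-neighbourhood, restrict to $k$-sets $e\subseteq Z$, and use Remark~\ref{remark: edge via cube} to get $\mathbb P[A_e]\ge p(1-p)^{2^k-2}$. The Janson route you settle on is a valid alternative concentration step. Events $A_e$ and $A_{e'}$ share a random vertex only if $|e\cap e'|\ge 2$, so $\Delta=O(d^{2k-2})$. The lower tail at $\mu/2$ is then $\exp\bigl(-\Omega(\mu^2/(\mu+\Delta))\bigr)=\exp(-\Omega(d^2))$. The denominator must be $\mu+\Delta$, not $\Delta$; this matters for $k=2$, where $\Delta=0$.

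You should also justify Janson's hypothesis properly. $A_e$ is not an ``absence'' event, since it also requires $v+\mathbf 1_e$ to be present. It still works because a vertex at distance exactly $k$ from $v$ is never interior to any $Q^d[v,v+\mathbf 1_{e'}]$. So you can declare such vertices good when present, and vertices at distance $2,\dots,k-1$ good when absent. Then every $A_e$ is the event that a fixed set of independent indicators are all good.

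More importantly, the obstacle that sent you to Janson comes from lossy bookkeeping, and the paper simply applies Lemma~\ref{l: azuma}. You correctly found that a vertex at distance $j$ influences at most $\binom{d}{k-j}$ indicators, but then replaced this by $d^{k-2}$ for every $j$. Keeping the decay in $j$, you get $\sum_{j=2}^{k}\binom{d}{j}\binom{d}{k-j}^2=O(d^{2k-2})$. This sum is dominated by $j=2$, since only $O(d^2)$ vertices have influence of order $d^{k-2}$. The term $t\max_w k_w$ is also $O(d^{2k-2})$. So with $t=\Theta(d^k)$ the bounded-difference inequality already gives $\exp(-\Omega(d^2))=o(2^{-d})$ for every fixed $k$, which is exactly the paper's proof.

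The partition/Steiner-system detour is therefore unnecessary. Janson makes the relevant dependency explicit (pairs with $|e\cap e'|\ge 2$, mirroring the $j=2$ term). The paper's route is shorter, needs no monotone recoding, and uses a lemma already stated.
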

\begin{proof}
Fix $q \coloneqq p(1-p)^{2^k-2}$ and, for every $v\in V(Q^d)$, denote by $A_v$ the event that $v\notin V(P^d_p)$ or $v$ has degree at least $(1-\alpha)d$ in $P_1$ or at least $q \alpha^k \tbinom{d}{k}/4$ in $P_k$. 
We show that, for every $v\in V(Q^d)$, $\mathbb P(A_v^c)=o(2^{-d})$; this suffices to conclude by a union bound. 
Fix a vertex $v$ and reveal the percolation on $v\cup N_{Q^d}(v)$. If $v\notin V(P^d_p)$ or at least $(1-\alpha)d$ vertices in $N(v)$ survive then $A_v$ holds, so suppose this is not the case. 
Denote by $U\subseteq N(v)$ the set of vertices which did not survive the percolation, and by $W$ be the set of vertices $w$ at distance $k$ from $v$ such that all paths from $v$ to $w$ in $Q^d[v,w]$ intersect $U$. 
In particular, $|W|=\binom{|U|}{k}$. For $w\in W$, if the only vertices in $Q^d[v,w]$ which survive percolation are $v$ and $w$, then by Remark~\ref{remark: edge via cube} we have $vw\in P_k$. Hence, $vw$ is present with probability at least $q$. 
Let $X$ denote the number of neighbours of $v$ in $P_k$. Then, $\mathbb E[X]\geq q|W| \geq q(\alpha^k/2)\tbinom{d}{k}$ since we assume that $|U|\ge \alpha d$ and $k$ is fixed. 
Finally, conditionally on the set $U$, we apply Lemma \ref{l: azuma} for $X$.
Observe that, for any vertex $u$ at distance $\ell\in [2,k]$ from $v$, if we change its status, we affect at most $\binom{d}{k-\ell}$ vertices $w\in W$ for which $u\in V(Q^d[v,w])$, and there are $\binom{d}{\ell}$ vertices at distance $\ell$ from $v$. Thus, by Lemma \ref{l: azuma},
\begin{align*}
    \mathbb{P}\left[X\le q|W|/2\right]\le \mathbb{P}\left[|X-\mathbb{E}[X]|\ge q|W|/2\right]
    &\le 2\exp\left(-\frac{(q|W|/2)^2}{2\sum_{\ell=2}^k\binom{d}{\ell}\binom{d}{k-\ell}^2+\frac{2q|W|}{6}\binom{d}{k-2}}\right)\\
    &\le \exp\left(-\frac{(q(\alpha^k/4)\tbinom{d}{k})^2}{3\binom{d}{2}\binom{d}{k-2}^2}\right)\le \exp\bigg(-\frac{q^2 \alpha^{2k} d^2}{100k^4}\bigg)=o(2^{-d}).&\qedhere
\end{align*}
\end{proof}

We begin with the proof of Theorem \ref{thm:main2}.
\begin{proof}[Proof of Theorem~\ref{thm:main2}]
Decompose $Q^d=Q^{d-b}\times Q^b$ as described in Section~\ref{subsec:renorm} and note that, for every integer $t\ge 0$,
\[1-(1-p)^{2^t}\le 1-(1-p)^{2^{t+1}}\le 2(1-(1-p)^{2^t}).\]
Thus, we choose $b$ so that $\rho \coloneqq 1-(1-p)^{2^b}\in[1/100,1/50]$ when $p<1/100$, and set $b=0$ when $p\in[1/100,1/2-\varepsilon]$ (in this case $\rho=p$). 
To set the argument up, we first establish the following three claims.
\begin{claim}\label{itm: small intersection}
Whp, for every $b$-dimensional cube $Q$ from the decomposition of $Q^d$, it holds that $|V(Q)\cap V(P^d_p)|\leq d$.
\end{claim}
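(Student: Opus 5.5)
This is a first-moment computation followed by a union bound over the $2^{d-b}$ fibres. Fix a $b$-dimensional cube $Q$ from the decomposition. The quantity $|V(Q)\cap V(P^d_p)|$ equals $|V(Q)\cap Q^d_p|$, since every point of $Q^d_p$ is a vertex of its convex hull. This count is distributed as $\mathrm{Bin}(2^b,p)$, with mean $\mu=2^bp$.

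The first step is to bound $\mu$ using the choice of $b$.
\begin{itemize}
\item If $p\ge 1/100$, then $b=0$ and the fibre has a single vertex, so the claim holds trivially.
\item If $p<1/100$, then $b$ was chosen with $\rho=1-(1-p)^{2^b}\le 1/50$. Since $1-(1-p)^{N}\ge 1-e^{-pN}\ge pN/2$ whenever $pN\le 1$, and here $pN$ stays small because $\rho\le 1/50$, we get $\mu=2^bp\le 2\rho\le 1/25$.
\end{itemize}

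The second step bounds the tail. For any integer $t$,
\[
\mathbb P[\mathrm{Bin}(2^b,p)\ge t]\le \binom{2^b}{t}p^t\le \frac{\mu^t}{t!}.
\]
Taking $t=d+1$ gives a bound of at most $(1/25)^{d+1}/(d+1)!$. The union bound over the at most $2^{d}$ fibres then gives a total failure probability of at most $2^d\cdot 25^{-d}=o(1)$.

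This argument uses only that $\mu=O(1)$; the $1/(d+1)!$ factor is not even needed.

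There is no real obstacle here. The only point needing care is converting the constraint $\rho\le 1/50$ into $\mu=2^bp=O(1)$. This conversion is valid because $p\le 1/2$, so $\log(1-p)\le -p$ and hence $2^bp\le -\log(1-\rho)\le 2\rho$.
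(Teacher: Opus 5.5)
Your proof is correct and follows the paper's argument. Both proofs use $\rho\le 1/50$ to push $2^bp$ below a small constant (the paper gets $2^bp<1/10$ from $e^{-p2^b}\ge(1-p)^{2^b}\ge 49/50$), bound the $\mathrm{Bin}(2^b,p)$ upper tail by a first-moment count over subsets (the paper uses $\binom{a}{i}\le(ea/i)^i$ summed over $i\ge d$, you use the slightly cleaner $\binom{2^b}{t}p^t\le\mu^t/t!$), and finish with a union bound over the $2^{d-b}$ fibres.

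One cosmetic point: $\log(1-p)\le -p$ holds for every $p<1$, so the hypothesis your last paragraph actually needs is $\rho\le 1/2$ (for $-\log(1-\rho)\le 2\rho$), not $p\le 1/2$.
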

\begin{proof}
     Fix $Q\in V(Q^{d-b})$ and note that $\mathbb P[|V(Q)\cap V(P^d_p)|>d]=\mathbb P[\mathrm{Bin}(2^b,p)>d]$. 
     If $p\geq 1/100$, then $b=0$ and the above probability is $0$. 
     Assume $p<1/100$. Then, we have $e^{-p{2^b}}\ge (1-p)^{2^b}\ge 49/50>e^{-1/10}$, and therefore $p\cdot 2^b< 1/10$. As a result, by using the inequality $\tbinom{a}{b}\le (ea/b)^b$ for integers $a\ge b\ge 1$, we have
    \begin{align*}
        \mathbb P[|V(Q)\cap V(P^d_p)|>d]\le \sum_{i=d}^{2^b}\binom{2^b}{i}p^i\le 2^b\left(\frac{ep2^b}{d}\right)^{d}<2^b\left(\frac{e}{10d}\right)^{d}.
    \end{align*}
    A union bound over the $2^{d-b}$ choices of $Q$ completes the proof.
\end{proof}

Recall the polytope $P^{d-b}_{\rho}$ obtained by taking the convex hull of the points in $Q^{d-b}$ after vertex-percolation with probability $\rho$: these correspond to cubes in the decomposition of $Q^d$ containing at least one vertex of $P^d_p$.

\begin{claim}\label{itm: enough cubes}
Whp, the number of $b$-dimensional cubes $Q$ from the decomposition of $Q^d$ for which $|V(Q)\cap V(P^d_p)|>0$ is at least $(9/10)\cdot p2^{d}$ or, in other words, $|V(P^{d-b}_{\rho})|\geq (9/10)\cdot p2^d$.
\end{claim}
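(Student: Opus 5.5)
The count is a sum of independent indicators, so I will compare its mean with $(9/10)p2^d$ and apply Chernoff (Lemma~\ref{l: chernoff}). The number of occupied $b$-dimensional cubes is distributed as $\mathrm{Bin}(2^{d-b},\rho)$: each of the $2^{d-b}$ fibres is occupied independently with probability $\rho=1-(1-p)^{2^b}$.

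The case $p\ge 1/100$ is immediate. Then $b=0$ and $\rho=p$, so the count is $\mathrm{Bin}(2^d,p)$ with mean $p2^d\ge 2^d/100$. Applying Lemma~\ref{l: chernoff} with $t=p2^d/10$ gives failure probability at most $2\exp(-p2^d/300)=o(1)$.

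For $p<1/100$, the key point is to compare the mean $2^{d-b}\rho$ with $p2^d$. Write $x=p2^b$. In the proof of Claim~\ref{itm: small intersection} it was shown that $x<1/10$, using $\rho\le 1/50$. By the inclusion--exclusion (Bonferroni) inequality,
\[
\rho=1-(1-p)^{2^b}\ge p2^b-\binom{2^b}{2}p^2\ge x-\frac{x^2}{2}=x\left(1-\frac{x}{2}\right)\ge \frac{19}{20}\,x .
\]
Therefore
\[
\mathbb E\big[|V(P^{d-b}_\rho)|\big]=2^{d-b}\rho\ge \frac{19}{20}\,p2^d .
\]
The expected count thus exceeds the target by a factor of $\frac{19/20}{9/10}=19/18$. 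A Chernoff bound with deviation $t=\rho2^{d-b}/20$ then gives
\[
|V(P^{d-b}_\rho)|\ge \frac{19}{20}\cdot\frac{19}{20}\,p2^d\ge \frac{9}{10}\,p2^d
\]
whp. The failure probability is at most $2\exp\!\big(-\rho2^{d-b}/1200\big)$.

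This bound is $o(1)$ provided $\rho 2^{d-b}\to\infty$. Since $\rho\ge 1/100$, this needs $2^{d-b}\to\infty$, which follows from $p\ge 2^{-0.83d}$. Indeed, $\rho\le 1/50$ forces $p2^b<1/10$, so $2^b<2^{0.83d}/10$, hence $d-b\ge 0.17d$ and $2^{d-b}\ge 2^{0.17d}$.

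The only delicate step is the lower bound $\rho\ge(1-x/2)x$ with $x$ small. This is where the choice $\rho\in[1/100,1/50]$, which forces $x<1/10$, is essential: it keeps $\rho$ close to $p2^b$, so the projection loses only a $1/20$ fraction rather than something larger.
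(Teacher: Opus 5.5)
Your proof is correct and follows essentially the same route as the paper: the Bonferroni bound $\rho\ge p2^b(1-p2^b/2)\ge \frac{19}{20}p2^b$, valid since $p2^b<1/10$, followed by a Chernoff bound on the binomial number of occupied fibres. You also make explicit two points the paper leaves implicit: the Chernoff deviation you use, and the fact that the mean $\rho 2^{d-b}\ge \frac{19}{20}p2^d$ tends to infinity because $p\ge 2^{-0.83d}$.
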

\begin{proof}
If $p\geq 1/100$, then $b=0$ and $P^{d-b}_\rho=P^{d}_p$. 
Hence, the claim follows from Lemma~\ref{l: chernoff}. Suppose then that $p<1/100$ and fix some $Q\in Q^{d-b}$. By the inclusion-exclusion principle, we obtain
\[\mathbb P[|V(Q)\cap V(P^d_p)|=0]=\mathbb P[\mathrm{Bin}(2^b,p)=0]\leq 1-2^bp+\binom{2^b}{2}p^2\leq 1-(1-2^bp/2)2^bp.\]
Similar to the proof of Claim~\ref{itm: small intersection}, we have $2^bp<1/10$ and thus
\[\mathbb P[|V(Q)\cap V(P^d_p)|>0] \ge (1-2^bp/2)2^bp \geq (19/20)2^bp.\]

Consequently, the number of $Q\in V(Q^{d-b})$ for which $|V(Q)\cap V(P^d_p)|>0$ follows a binomial distribution with expectation at least $(19/20)2^bp\cdot 2^{d-b}$. Thus, by Lemma~\ref{l: chernoff}, whp there are at least $(9/10)\cdot p2^{d}$ such $Q\in Q^{d-b}$.
\end{proof}
\begin{claim}\label{itm: high enough degree} 
Whp, every cube $Q$ as above has degree at least $p(1-p)^{2^k-2} \alpha^k \tbinom{d}{k}/4$ in $P^{d-b}_{\rho}(k)$.
\end{claim}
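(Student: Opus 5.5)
Claim~\ref{itm: high enough degree} will follow by applying Lemma~\ref{lem:min degree} to the projected polytope $P^{d-b}_\rho$ in place of $P^d_p$. The constants in the claim must then be read with $\rho$ in place of $p$ and $d-b$ in place of $d$. This costs only constant factors: $b$ is chosen so that $\rho\in[1/100,1/50]$ or $\rho=p\le 1/2-\eps$. The ratio $\binom{d-b}{k}/\binom{d}{k}=1-o(1)$ whenever $b=o(d)$, which holds since $p\ge 2^{-0.83d}$ forces $2^b\approx 1/p$, so $b\le 0.83d+O(1)$. When $b$ is linear in $d$ the binomial ratio is only a constant depending on $k$. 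In either case the constant is absorbed by adjusting $c_k$. I would state the claim with $\rho$ and an adjusted constant rather than $p$.

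The key steps in order are as follows. First, note that $Q$ (the projected vertex set) is exactly $Q^{d-b}_\rho$ and $P^{d-b}_\rho=\mathrm{conv}(Q)$, so Lemma~\ref{lem:min degree} applies verbatim in dimension $d-b$. It gives, whp, that every vertex of $P^{d-b}_\rho$ has either at least $(1-\alpha)(d-b)$ neighbours in $P_1$, or at least $\rho(1-\rho)^{2^k-2}\alpha^k\binom{d-b}{k}/4$ neighbours in $P_k$. Second, I rule out the first alternative. Every $P_1$-edge between cube vertices is a hypercube edge with both endpoints present, so a vertex with $(1-\alpha)(d-b)$ $P_1$-neighbours has degree at least $(1-\alpha)(d-b)$ in $Q^{d-b}_\rho$. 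Since $\rho\le\max(1/50,1/2-\eps)\le 1/2-\eps$, the second item of Lemma~\ref{lem: degrees} (applied in dimension $d-b$ with parameter $\rho$) shows that whp every vertex of $Q^{d-b}_\rho$ has degree at most $(1-\alpha)(d-b)$. Choosing $\alpha$ as given there, slightly decreased so that the inequality is strict, only the $P_k$ alternative remains.

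The main obstacle is that ``whp'' should refer to $d-b\to\infty$. If $p$ is close to $2^{-0.83d}$, then $d-b\approx 0.17d$, which still tends to infinity, so both lemmas apply. The failure probabilities in Lemma~\ref{lem:min degree} and Lemma~\ref{lem: degrees} are $o(1)$ as functions of the dimension $d-b$. I would double-check that the exponent in Lemma~\ref{lem:min degree}, namely $o(2^{-(d-b)})$ per vertex, still beats the union bound over $2^{d-b}$ vertices, which it does since everything is stated in the same dimension. The remaining care is bookkeeping of constants. Replacing $\rho$ by a lower bound in terms of $p$ is not needed for the later arguments, since only a degree of order $\binom{d}{k}$ in $P^{d-b}_\rho(k)$ is used. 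I would therefore record the bound as $c'\binom{d}{k}$ with $c'=c'(k,\eps,\alpha)>0$.
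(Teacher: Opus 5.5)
Your proposal is correct and follows the paper's proof: the second item of Lemma~\ref{lem: degrees}, applied to $Q^{d-b}_\rho$ with $\rho\le 1/2-\eps$, rules out the $P_1$ alternative, and Lemma~\ref{lem:min degree}, applied to $P^{d-b}_\rho$, then gives the $P_k$ degree bound. You are also right that the bound should read $\rho(1-\rho)^{2^k-2}\alpha^k\binom{d-b}{k}/4$, which is still $\Omega_k\bigl(\binom{d}{k}\bigr)$ because $\rho\ge 1/100$ and $d-b=\Omega(d)$; this is exactly what the paper's one-line proof implicitly uses downstream.
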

\begin{proof}
    By Lemma~\ref{lem: degrees}, we get that whp every $Q\in V(Q^{d-b})$ has degree at most $(1-\alpha)d$ in $Q^{d-b}_{\rho}=P^{d-b}_{\rho}(1)$. The claim then follows by Lemma~\ref{lem:min degree}.
\end{proof}
We are ready to prove the theorem.
Consider $S\subseteq V(P^{d}_p)$ with $|S|\leq |V(P^d_p)|/2$, and let $S'\subseteq V(Q^{d-b})$ be the set of $b$-dimensional cubes $Q$ in the decomposition of $Q^d$ which intersect $S$. 
By Claim~\ref{itm: small intersection}, we have that $|S'|\geq |S|/d$. Furthermore, by Lemma~\ref{l: chernoff}, whp it holds that $|V(P^d_p)|\leq (1+o(1))p2^d$, so that $|S'|\leq |S|\leq (1/2 + o(1))p2^d$. 
By Claim~\ref{itm: enough cubes}, we then obtain that $|S'|\leq (1/2+o(1))(10/9) |V(P^{d-b}_\rho)|\le 3/5\cdot |V(P^{d-b}_\rho)|$.
   
Let $k$ be an arbitrarily large odd constant, and set $m=d/(3k^2)$. We aim to apply Lemma~\ref{lem: bootstrapped expansion} with $Q_k$ given by the the graph of $P^{d-b}_{\rho}(k)$ to show that $S'$ expands well in $P^{d-b}_{\rho}(k)\subseteq P^{d-b}_{\rho}$. 
To this end, we need to demonstrate that \ref{itm: p}, \ref{itm: expansion} and \ref{itm: cubes hit equally} hold with high probability. 
Point \ref{itm: p} follows directly from Lemma~\ref{l: chernoff} as mentioned earlier. 
Further, recall that, by Lemma~\ref{lem: size of Qkm}, we have that $|\cQ_{k,m}|\leq e^{2d\log d}$. Thus, \ref{itm: cubes hit equally} follows from Lemma~\ref{l: chernoff} together with the union bound over all pairs $H,H'\in\cQ_{k,m}$. It remains to show \ref{itm: expansion}. 
To do so, fix $H\in\cQ_{k,m}$ and note that, by Lemma~\ref{lem:reduction}, $H\cap P^{d-b}_{\rho}(k)$ is distributed as $Q^m_{\rho,q}$ with $q$ constant. 
By Proposition~\ref{prop: expansion of high degree} and the fact that $m=\Theta(d/k^2)$, we arrive at the conclusion that $H$ satisfies \ref{itm: expansion} with probability at least $1-2^{-\omega(d\log d)}$.

Finally, by Lemma~\ref{lem: bootstrapped expansion} (which we can apply to $S'$ by Claim~\ref{itm: high enough degree}), we obtain that $e_{Q_k}(S',V(Q_k)\setminus S')\geq \tbinom{d}{k}|S'|/d^3$.
As Lemma~\ref{lem:normalization} yields $e_{P^{d}_p}(S,V(P^{d}_p)\setminus S)\geq \tbinom{d}{k}|S'|/d^3$, the bound $|S'|/d\geq |S|$ concludes the proof.
\end{proof}
We now turn to the proof of Theorem \ref{thm:main1}.

\begin{proof}[Proof of Theorem~\ref{thm:main1}]
By Theorem~\ref{thm:main2}, we may (and do) assume that $p\geq 1/3$. 
Fix $b=100$, $\rho=  1-(1-p)^{2^b} > 1-e^{-4}$ and consider the decomposition $Q^d=Q^{d-b}\times Q^b$ as described in Section~\ref{subsec:renorm}. 
Fix also $S\subseteq V(P_p^d)$ with $|S|\leq |V(P_p^d)|/2$, and $\alpha\in (0,1)$. 
If $|S|\leq 2^{(1-\alpha^3/2)d}$, let $S'\subseteq V(Q^{d-b})$ be the set of cubes $Q\in V(Q^{d-b})$ which intersect $S$. 
Note that $|S|/2^b\leq|S'|\leq |S|\leq 2^{(1-\alpha^3/2)d}$. 
By Lemma~\ref{lem: Wojtek}, we get that $e_{P^{d-b}_\rho}(S',V(P^{d-b}_\rho)\setminus S')\geq (|S'|/8)\cdot (\alpha^3 d/2-b)$. 
By Lemma~\ref{lem:normalization}, it follows that $e_{P^{d}_p}(S,V(P^{d}_p)\setminus S)\geq (|S'|/8)\cdot \alpha^3 d/4\geq |S|\cdot (\alpha^3 d/2^{b+5})$.
    
Suppose that $|S|\geq 2^{(1-\alpha^3/2)d}$. Combining Lemma~\ref{lem: degrees} and Lemma~\ref{lem:min degree} similarly to the proof of Claim~\ref{itm: high enough degree}, whp all but $2^{(1-\alpha^3)d}$ vertices of $P_p^d$ have degree at least 
$d' := p(1-p)^{2^k-2} \alpha^k \tbinom{d}{k}/4$ in $P_p^d(k)$. 
Denote by $S''\subseteq S$ the subset of vertices with degree at least $d'$ in $P_p^d(k)$ and note that, by the above, we may (and do) assume $|S''|\geq (1-2^{-\alpha^3 d/2})|S|$. 
Furthermore, by the upper bound on $S$, we have $|S''|\leq 3/5\cdot |V(P^d_{p}(k))|$.
Recalling that assumptions \ref{itm: p}, \ref{itm: expansion} and \ref{itm: cubes hit equally} are satisfied whp (shown in the proof of Theorem~\ref{thm:main2}), we can apply Lemma~\ref{lem: bootstrapped expansion} to conclude that $e_{P^d_p(k)}(S'',V(P^d_p(k))\setminus S'')\geq \tbinom{d}{k}|S''|/d^3$. 
Since also the maximum degree of the graph of $P^d_p(k)$ is at most $\tbinom{d}{k}$ we have \[e_{P^d_p(k)}(S,V(P^d_p(k))\setminus S)\geq e_{P^d_p(k)}(S'',V(P^d_p(k))\setminus S'')-|S\setminus S''|\cdot \tbinom{d}{k}\ge \tbinom{d}{k} |S''|/(2d^3). \qedhere\]
\end{proof}

\section{Concluding remarks}
In this paper, we essentially resolve the problem of the edge-expansion of random $0/1$ polytopes, proving in a strong form the Mihail--Vazirani conjecture in this setting; 
in particular, we showed that, for a typical $0/1$ polytope, the Mihail--Vazirani conjecture holds in a strong sense. We further establish that the edge-expansion exhibits a phase transition at $p = 1/2$. 

Our results suggest several open questions. First, for $p > 1/2$, Theorem~\ref{thm:main1} leaves a gap between our
lower bound of $cd$ and the upper bound of $d$. It would be interesting to understand the dependency between $c$ and $p$. Second, Theorem~\ref{thm:main2} establishes super-polynomial expansion when $p < 1/2$, and minor modifications suffice to establish the optimal exponent of $d$ for constant $p$, but the precise growth rate as a function of both $p$ and $d$ remains open when $p=p(d)\to 0$ fast enough.
It is natural to believe that this growth rate is given by the minimum degree of the graph of $P^d_p$, as this is the case for many mean-field random models with good expansion properties.
Confirming this suspicion and further showing that the minimum and the average degree of the graph are typically within a polynomial factor of $d$ from each other would lead to significantly enhanced (polynomial) control on the mixing time of the lazy random walk on $P^d_p$.

\paragraph{Acknowledgements} MC was supported by SNSF Ambizione grant No.~216071. LL was supported by the Austrian Science Fund (FWF) grant No.~10.55776/ESP624. BS was supported by SNSF grant No.~200021–228014. 
For open access purposes, the authors have applied a CC BY public copyright license to any author-accepted manuscript version arising from this submission.
Part of this work was done while LL was visiting FIM at ETH Z\"urich; the author is grateful to the institution for its support and hospitality.

\bibliographystyle{alpha}
{\footnotesize{\bibliography{big}}}

@inproceedings{FKSS26,
  title={On the edge expansion of random polytopes},
  author={Ferber, A. and Krivelevich, M. and Sales, M. and Samotij, W.},
  booktitle={Proceedings of the 2026 Annual ACM-SIAM Symposium on Discrete Algorithms (SODA)},
  pages={3022--3035},
  year={2026},
  organization={SIAM}
}

@book{LP17,
  title={Markov chains and mixing times},
  author={Levin, D.~A. and Peres, Y.},
  volume={107},
  year={2017},
  publisher={American Mathematical Society}
}

@article{H64,
    author  = "Harper, L. H.",
    title   = "Optimal assigments of numbers to vertices",
    year    = "1964",
    journal = "SIAM J. Appl. Math.",
    volume  = "12",
    pages   = "131-135"
}

@article{L64,
    author  = "Lindsey, J. H.",
    title   = "Assigment of numbers to vertices",
    year    = "1964",
    journal = "Am. Math. Mon.",
    volume  = "71",
    pages   = "508-516"
}

@article{B67,
    author  = "Bernstein, A. J.",
    title   = "Maximally connected arrays on the $n$-cube",
    year    = "1967",
    journal = "SIAM J. Appl. Math.",
    volume  = "15",
    pages   = "1485-1489"
}

@article{H76,
    author  = "Hart, S.",
    title   = "A note on the edges of the $n$-cube",
    year    = "1976",
    journal = "Discrete Math.",
    volume  = "14",
    pages   = "157-163"
}

@article{H66,
  AUTHOR = {Harper, L. H.},
     TITLE = {Optimal numberings and isoperimetric problems on graphs},
   JOURNAL = {J. Comb. Theory},
  FJOURNAL = {Journal of Combinatorial Theory},
    VOLUME = {1},
      YEAR = {1966},
     PAGES = {385--393},
      ISSN = {0021-9800},
   MRCLASS = {05.40},
  MRNUMBER = {200192},
MRREVIEWER = {D. W. Walkup},
}

@Book{AS16,
 Author = {N. {Alon} and J. H. {Spencer}},
 Title = {{The probabilistic method}},
 ISBN = {978-1-119-06195-3; 978-1-119-06207-3},
 Pages = {384},
 Year = {2016},
 EDITION = {Fourth},
 Publisher = {Hoboken, NJ: John Wiley \& Sons},
 Language = {English},
 MSC2010 = {05-02 05D40},
 Zbl = {1333.05001}
}

@article{ALOV,
    AUTHOR = {Anari, N. and Liu, K. and Oveis Gharan, S. and
              Vinzant, C.},
     TITLE = {Log-concave polynomials {II}: {H}igh-dimensional walks and an
              {FPRAS} for counting bases of a matroid},
   JOURNAL = {Ann. of Math. (2)},
  FJOURNAL = {Annals of Mathematics. Second Series},
    VOLUME = {199},
      YEAR = {2024},
    NUMBER = {1},
     PAGES = {259--299},
      ISSN = {0003-486X,1939-8980},
   MRCLASS = {68Q87 (60J10 68W20 68W25)},
  MRNUMBER = {4681146},
MRREVIEWER = {Yuzhou\ Gu},
       DOI = {10.4007/annals.2024.199.1.4},
       URL = {https://doi.org/10.4007/annals.2024.199.1.4},
}

@misc{ADLZ25,
 author = {Anastos, M. and Diskin, S. and Lichev, L. and Zhukovskii, M.},
 title = {Diameter and mixing time of the giant component in the percolated hypercube},
 year = {},
 note = {arXiv:2510.13348, 2025},
 url = {https://arxiv.org/abs/2510.13348},
 arXiv = {arXiv:2510.13348}
}

@article{BB,
  title={On random $2$-adjacent $0/1$-polyhedra.},
  author={Bondarenko, V.~A. and Brodskii, A.~G.},
  journal={Discrete Math. Appl.},
  volume={18},
  number={2},
  year={2008}
}

@misc{BEF25,
  author        = {Babecki, C. and Elling, T. and Ferber, A.},
  title         = {Sharp Threshold for Cliques in Random $0/1$ Polytope Graphs},
  year          = {},
  eprint        = {2507.03212},
  archivePrefix = {arXiv},
  primaryClass  = {math.CO},
  note={arxiv:2507.03212, 2025},
}

@inproceedings{FM92,
  author    = {Feder, T. and Mihail, M.},
  title     = {Balanced Matroids},
  booktitle = {Proceedings of the 24th Annual ACM Symposium on Theory of Computing (STOC)},
  year      = {1992},
  pages     = {26--38},
  publisher = {ACM},
}

@phdthesis{Gillmann,
author = {R. Gillmann},
  title  = {$0/1$-Polytopes: Typical and Extremal Properties},
  school = {Technische Universit{\"a}t Berlin},
  year   = {2006}
}

@article{Harper,
  author  = {Harper, L. H.},
  title   = {Optimal Numberings and Isoperimetric Problems on Graphs},
  journal = {J. Comb. Theory},
  volume  = {1},
  number  = {3},
  pages   = {385--393},
  year    = {1966},
}

@article{HLW,
  author  = {Hoory, S. and Linial, N. and Wigderson, A.},
  title   = {Expander Graphs and Their Applications},
  journal = {Bull. Am. Math. Soc.},
  volume  = {43},
  number  = {4},
  pages   = {439--561},
  year    = {2006},
}

@article{JSV,
  author  = {Jerrum, M. and Sinclair, A. and Vigoda, E.},
  title   = {A Polynomial-Time Approximation Algorithm for the Permanent of a Matrix with Nonnegative Entries},
  journal = {J. ACM},
  volume  = {51},
  number  = {4},
  pages   = {671--697},
  year    = {2004},
}

@incollection{Kaibel04,
  author    = {Kaibel, V.},
  title     = {On the Expansion of Graphs of $0/1$-Polytopes},
  booktitle = {The Sharpest Cut: The Impact of Manfred Padberg and His Work},
  series    = {MPS-SIAM Series on Optimization},
  pages     = {401--415},
  publisher = {SIAM},
  year      = {2004},
}

@inproceedings{KR03,
  author    = {Kaibel, V. and Remshagen, A.},
  title     = {On the Graph-Density of Random $0/1$-Polytopes},
  booktitle = {Approximation, Randomization, and Combinatorial Optimization (APPROX-RANDOM)},
  series    = {Lecture Notes in Computer Science},
  volume    = {2764},
  pages     = {318--328},
  publisher = {Springer},
  year      = {2003},
}

@incollection{Krivelevich,
  author    = {Krivelevich, M.},
  title     = {Expanders---How to Find Them, and What to Find in Them},
  booktitle = {Surveys in Combinatorics 2019},
  series    = {London Mathematical Society Lecture Note Series},
  volume    = {456},
  pages     = {158--205},
  publisher = {Cambridge University Press},
  year      = {2019},
}

@article{LR22,
  author  = {Leroux, B. and Rademacher, L.},
  title   = {Expansion of Random $0/1$ Polytopes},
  journal = {Random Struct. Algorithms},
  volume  = {64},
  number  = {1},
  year    = {2024},
  eprint        = {2207.03627},
  archivePrefix = {arXiv},
  primaryClass  = {math.CO},
}

@inproceedings{MV92,
  author    = {Mihail, M. and Vazirani, U.},
  title     = {On the Expansion of $0/1$ Polytopes},
  booktitle = {Proceedings of the 24th Annual ACM Symposium on Theory of Computing (STOC)},
  pages     = {26--38},
  publisher = {ACM},
  year      = {1992},
}

@inproceedings{Mihail92,
  author    = {Mihail, M.},
  title     = {On the Expansion of Combinatorial Polytopes},
  booktitle = {Mathematical Foundations of Computer Science (MFCS)},
  series    = {Lecture Notes in Computer Science},
  volume    = {629},
  pages     = {37--49},
  publisher = {Springer},
  year      = {1992},
}

@phdthesis{Mihailthesis,
  author = {Mihail, M.},
  title  = {Combinatorial Aspects of Expanders},
  school = {Harvard University},
  year   = {1989},
}

@book{Schrijver,
  author    = {Schrijver, A.},
  title     = {Combinatorial Optimization: Polyhedra and Efficiency},
  publisher = {Springer},
  address   = {Berlin},
  year      = {2003},
}

@article{War16,
  title={On the method of typical bounded differences},
  author={Warnke, L.},
  journal={Comb. Probab. Comput.},
  volume={25},
  number={2},
  pages={269--299},
  year={2016},
  publisher={Cambridge University Press}
}

@misc{GT26,
  title={Random 0/1-polytopes expand rapidly},
  author={Guo, H. and Tomon, I.},
  note={arXiv:2604.09520, 2026},
  year={}
}
\end{document}